\renewcommand*{\eqref}[1]{%
	\hyperref[{#1}]{\textup{\tagform@{\ref*{#1}}}}%
}
\newtheorem{theorem}{Theorem}[section]
\newtheorem{remark}[theorem]{Remark}
\numberwithin{equation}{section}
\newcommand{\Rmnum}[1]{\expandafter\@slowromancap\romannumeral #1@}
\newcommand{\me}{\mathrm{e}}
\newcommand{\mi}{\mathrm{i}}
\newcommand{\ms}{\mathbb{S}}
\newcommand{\ps}{\widehat{\mathbb{S}}}
\newcommand{\mm}{\mathbb{M}}
\newcommand{\mr}{\mathbb{R}^2}
\newcommand{\ml}{\mathcal{L}^2}
\newcommand{\fp}{\mathfrak{p}}
\newcommand{\fs}{\mathfrak{s}}
\newcommand{\bi}{\mathbb{I}}
\newcommand{\od}{\overline{D}}
\newcommand{\oo}{\mathcal{O}}
\newcommand{\kp}{\kappa_\mathfrak{p}}
\newcommand{\ks}{\kappa_\mathfrak{s}}
\newcommand{\grad}{\textrm{grad}}
\renewcommand{\div}{\textrm{div}}
\newcommand{\bu}{\boldsymbol{u}}
\newcommand{\bv}{\boldsymbol{v}}
\newcommand{\hx}{\hat{x}}
\newcommand{\uinc}{\boldsymbol{u^i} }
\newcommand{\uincf}{\boldsymbol{u}^{\boldsymbol{i}}_f}
\begin{document}
	
%	\title[Selective focusing of elastic cavities using DORT method]{Far Field Model of Elastic Time Reversal and Application to Selective Focusing on Small Cavities}
		\title[Selective focusing of elastic cavities using DORT method]{Selective focusing of elastic cavities based on the time reversal far field model}

	\author{Jinrui Zhang}
	\address{School of Mathematical Sciences, Zhejiang University,
		Hangzhou, Zhejiang 310027, China}
	\email{zhangjinrui@zju.edu.cn}
	\author{Jun Lai}
	\address{School of Mathematical Sciences, Zhejiang University,
		Hangzhou, Zhejiang 310027, China}
	\email{laijun6@zju.edu.cn}
	 
\thanks{JL was partially supported by the Key Project of Joint Funds for Regional Innovation and Development (No. U21A20425) and the ``Xiaomi Young Scholars" program from Xiaomi Foundation.}
%	
%	\subjclass[2020]{35B40, 35P25, 65R20, 78A46}
%	
%	\keywords{Elastic scattering, multiple scattering, inverse obstacle scattering, time reversal method, fast multipole method}
	
	\begin{abstract}
	This paper is concerned with the inverse time harmonic elastic scattering of multiple small and well-resolved cavities in two dimensions. We extend the so-called DORT method to the inverse elastic scattering so that selective focusing can be achieved on each cavity with far field measurements. A rigorous mathematical justification that relates the corresponding eigenfunctions of the time reversal operator to the locations of cavities is presented based on the asymptotic analysis of the far field operator and decaying property of oscillatory integrals. We show that in the regime $a\ll k^{-1}\ll L$, where $a$ denotes the size of cavity, $k$ is the compressional wavenumber $\kp$ or shear wavenumber $\ks$, and $L$ is the minimal distance between the cavities, each cavity gives rise to five significant eigenvalues and the corresponding eigenfunction generates an incident wave focusing selectively on that cavity. Numerical experiments are given to verify the theoretical result.

	\vspace{1em}
	
	\noindent\textbf{Keywords.} {Inverse elastic scattering, DORT method, time reversal operator, asymptotic analysis, selective focusing}
\end{abstract}
	
	\maketitle
	
	\section{Introduction}
In the last few years, the inverse elastic scattering problems have attracted extensive attentions due to their applications in nondestructive testing, medical imaging and seismic exploration, etc ~\cite{2015Mathematical,2020An,2018Inverse}.  At the same time, DORT method as a time reversal technique has demonstrated its efficiency in inverse acoustic and electromagnetic problems~\cite{liDecompositionTimeReversal2012a,finkTimeReversalAcousticsBiomedical2003}. This work is concerned with the time harmonic inverse elastic scattering by multiple cavities embedded in a homogeneous and isotropic elastic medium in two dimensions and extends the DORT method to the inverse elastic scattering with a rigorous mathematical justification.

Over the years, many methods have been proposed to solve the inverse elastic obstacle problems, including both the iterative type and direct imaging methods. For instance, in \cite{le2015domain}, a gradient descent method based on the domain derivative of elastic scattering was developed for reconstructing unknown elastic obstacles. In \cite{BAO2018263}, the Fréchet derivative of the near-field operator with respect to the boundary of an anisotropic scatterer was derived and used in a descent algorithm to recover the interface. As a direct imaging method, linear sampling based on the factorization of far field operator has been proposed for the inverse elastic obstacle scattering in \cite{alvesFarfieldOperatorElastic2002}. In ~\cite{huInverseProblemsArising2012a}, factorization methods have been extensively studied for the reconstruction of finitely many point-like and extended elastic rigid obstacles. More approaches on the inverse elastic scattering problems can be found in  ~\cite{arensLinearSamplingMethods2001,huRecoveringComplexElastic2014,huDirectInverseTimeharmonic2020}, etc. 

Despite all these efforts, there are still many challenges for the inverse elastic scattering of multiple obstacles, including the existence of many local minimums via optimization based methods and the difficulties to distinguish different scatterers during the reconstruction. In the present paper, we are concerned with the so-called DORT method (french acronym for Decomposition of the Time Reversal Operator), which is an experimental technique used to focus waves selectively on small and well resolved scatterers (i.e. when multiple scattering can be neglected). This method was first developed by Prada and Fink \cite{pradaEigenmodesTimeReversal1994} in the context of ultrasonics (see \cite{pradaTimeReversalTechniques2002} for an overview) and has been widely used in acoustics and electromagnetics. In the frequency domain, the process of DORT method is simply using a time reversal mirror (TRM), composed of an array of transducers, to emit an incident wave in a homogeneous and nondissipative medium containing some unknown obstacles. The measured far field is then conjugated, which is equivalent to reversing time in the time harmonic case, and re-emitted. The time reversal operator $T$ is obtained by iterating this procedure twice. It turns out the eigenelements of the time reversal operator carry important information on the scatterers contained in the propagation medium. More precisely, in the DORT method, the number of nonzero significant eigenvalues of $T$ is directly related to the number of scatterers, and the corresponding eigenvector generates an incident wave that selectively focuses on each scatterer.

From the mathematical point of view, a rigorous justification of the above result has been given in Hazard and Ramdani \cite{hazardSelectiveAcousticFocusing2004} for the 3D acoustic scattering problem by small sound-soft scatterers using a far field model. In this model, the TRM was supposed to be continuous and located at infinity. In \cite{benamarNumericalSimulationAcoustic2007}, the interactions between time reversal mirror and the obstacles are taken into account. Then, it has been extended to the case of a 2D closed acoustic waveguide in \cite{pinconSelectiveFocusingSmall2007}, the case of perfectly conducting electromagnetic scatterers in \cite{antoineFarFieldModeling2008} and the case of rigid elastic particles in \cite{laiFastInverseElastic2022}. In this work, we are interested in the case of 2D elastic scattering by small cavities.  Using a far field model for time reversal, we prove that in the regime $a\ll k^{-1} \ll L$, where $a$ denotes here the size of cavity, $k$  is the compressional wavenumber $\kp$ or shear wavenumber $\ks$, $L$ is the minimal distance between the cavities, each cavity gives rise to five significant eigenvalues. The result is slightly surprising, as one might expect it is six as a direct consequence from acoustics~\cite{burkardFarFieldModel2013}. This is due to the interconnection between the compressional and shear wave of elastic scattering, where the eigensystem analysis is much more involved compared to that of acoustics and electromagnetics. Furthermore, we also show each corresponding eigenfunction generates an incident wave focusing selectively on that cavity. We start from the analysis of a small elastic disk, where the five significant eigenvalues can be explicitly calculated based on the Mie theory. Then extend the result to general shaped small cavities by combining the elastic potential theory and the asymptotic analysis for weak elastic scattering. 

The paper is organized as follows. In Section 2, we formulate the scattering and inverse  scattering problems of multiple elastic cavities in two dimensions. In Section 3, we show the global focusing properties using the eigenfunctions of the time reversal operator in the case of extended obstacles. Section 4 starts with the eigensystem of a single disk and then gives the asymptotic analysis for the far field operator of small and distant cavities. Section 5 is devoted to the mathematical justification of the DORT method in the case of general shaped small elastic cavities. Numerical simulations are presented in Section 6 to verify the theoretical results. Finally, the paper is concluded in Section 7.

\section{Problem formulation}

Consider $M$ well-separated elastic cavities (or particles) in two dimensions, denoted by $D_1,D_2,\dots, D_M$, as shown in figure \ref{geometry}. Assume their boundaries $\Gamma_{1}, \Gamma_{2}, \dots, \Gamma_{M}$ are at least $C^2$ smooth. Let $D=D_{1}\cup D_{2}\cup\cdots\cup D_{M}$ and $\Gamma=\Gamma_{1}\cup\Gamma_{2}\cup\cdots\cup\Gamma_{M}$.  Denote $\nu$ the unit exterior normal vector on $\Gamma$. The exterior domain $\mr\setminus \od$ is assumed to be filled with a homogeneous and isotropic elastic medium with a unit mass density (i.e. $\rho = 1$). Let the particles be illuminated by a time harmonic incident wave $\uinc$.  The displacement of the total field $\boldsymbol u$ consists of the incident field $\uinc$ and the scattered field $\boldsymbol v$, i.e., $ \boldsymbol u=\boldsymbol{u^i}+\boldsymbol v$, and satisfies the Navier equation
\begin{equation}\label{navierequ}
	\mu\Delta\boldsymbol{u}+(\lambda+\mu)\nabla\nabla\cdot\boldsymbol{u}
	+\omega^2\boldsymbol{u}=0,\mbox{ in } \mathbb{R}^2\setminus \overline{D}, 	
\end{equation}
where $\omega>0$ is the angular frequency and $\lambda, \mu$ are the
Lam\'{e} constants satisfying $\mu>0, \lambda+\mu>0$.
For a nonzero vector $x\in\mr$, let us introduce two unit vectors: $\hx:=x/|x|$, and
$\hx^\bot$, which is obtained by rotating $\hx$ anticlockwise by $\pi/2$.
\begin{figure}
	\centering
	\includegraphics[scale=0.25]{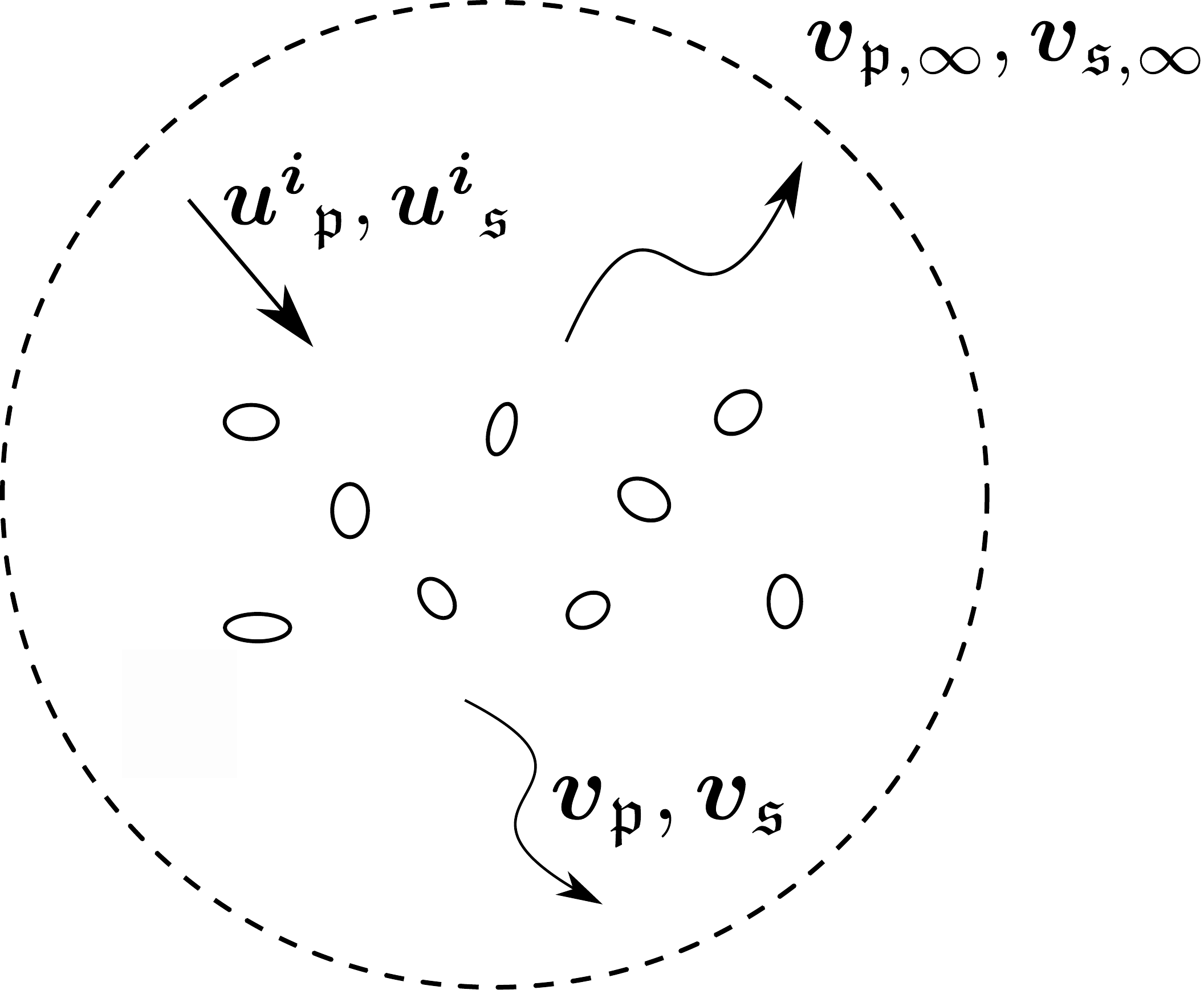}
	\caption{Inverse scattering of multiple elastic cavities using the far field measurement.}\label{geometry}
\end{figure}

In addition to the usual differential operators \textit{grad} and \textit{div}, we also need
\[\grad^\bot \bu=\nabla^\bot \bu:=\left[-\frac{\partial u}{\partial x_2},\frac{\partial u}{\partial x_1}\right]^\top,\qquad \div^\bot\bu=\nabla^\bot\cdot\bu:=\frac{\partial u_2}{\partial x_1}-\frac{\partial u_1}{\partial x_2}.\]
Define the linearized strain tensor by 
\[\varepsilon(\bu):=\frac{1}{2}(\nabla \bu+\nabla \bu^\top)\in\mathbb{C}^{2\times2},\]
where $\nabla \bu$ and $\nabla \bu^\top$ stand for the Jacobian matrix of $\bu(x)\in \mathbb{C}^2$ and its adjoint, respectively. By Hooke’s law, the strain tensor is related to the stress tensor via the identity
\begin{align}\label{sigmau}
	\sigma(u)=\lambda(\nabla\cdot \bu)\mathbb{I}+2\mu\varepsilon(\bu)\in\mathbb{C}^{2\times2},
\end{align}
where $\mathbb{I}$ is the $2\times 2$ identity matrix. The surface traction on $\Gamma$ is defined as
\begin{eqnarray}
	%	T_\nu \bu: = \nu\cdot\sigma(\bu)=2\mu \nu\cdot \grad\bu + \lambda \nu\div\bu - \mu\nu^\bot\div^\bot\bu.
	T_\nu \bu: = \nu\cdot\sigma(\bu)=2\mu \nu\cdot \nabla\bu + \lambda \nu\nabla\cdot\bu - \mu\nu^\bot\nabla^\bot\cdot\bu.
\end{eqnarray}
Since elastic cavities are traction-free, it holds 
\begin{eqnarray}\label{rigidboundary}
	T_\nu\boldsymbol{u}=0,\quad {\rm on}~\Gamma.
\end{eqnarray}

The incident wave $\boldsymbol{u^i}$ is given as a linear combination of a longitudinal plane wave $$\boldsymbol{u^i}(x,\alpha,f_{\fp}(\alpha),0):=\boldsymbol{u^i_{\fp}}(x)=f_{\fp}(\alpha) \alpha\mathrm{e}^{\mathrm{i} \kappa_{\mathfrak p}\alpha\cdot x},$$ and a transversal plane wave $$\boldsymbol{u^i}(x,\alpha,0,f_{\fs}(\alpha)):=\boldsymbol{u^i_{\fs}}(x)=f_{\fs}(\alpha)\alpha^\bot
\mathrm{e}^{\mathrm{i}\kappa_{\mathfrak s} \alpha\cdot x},$$
where $\alpha=(\cos\theta, \sin\theta)^\top$ is the unit propagation vector, and 
$f_{\fp}(\alpha),f_{\fs}(\alpha)\in \mathbb{C}$ are the longitudinal and transversal strength. 
Note that $\boldsymbol{u^i_{\fp}}$ and $\boldsymbol{u^i_{\fs}}$ are also called the compressional and shear incident wave, respectively, with compressional wavenumber $\kp$ and shear wavenumber $\ks$ defined by
\[
\kappa_{\mathfrak p}=\frac{\omega}{\sqrt{\lambda+2\mu}},\quad
\kappa_{\mathfrak s}=\frac{\omega}{\sqrt{\mu}}. 
\]
Throughout the paper, we assume $\kappa_{\mathfrak p}$ and $\kappa_{\mathfrak s}$ are both positive real and on the same order, i.e. $\kappa_{\mathfrak p}\sim \kappa_{\mathfrak s}$.

It is easy to verify that the scattered field $\boldsymbol v$ satisfies
the boundary value problem
\begin{equation}\label{scatteredfield}
	\begin{cases}
		\mu\Delta\boldsymbol{v}+(\lambda+\mu)\nabla\nabla\cdot\boldsymbol{v}
		+\omega^2\boldsymbol{v}=0\quad &{\rm in}~
		\mr\setminus\od,\\
		T_\nu\boldsymbol{v}=-T_\nu\boldsymbol{u^i}\quad &{\rm on}~\Gamma.
	\end{cases}
\end{equation}
By Helmholtz decomposition, any solution $\bv$ to \eqref{scatteredfield} can be
decomposed into the form
\begin{align}\label{helmdec}
	% 	\bv = \bv_\fp+\bv_\fs,\quad\boldsymbol v_{\mathfrak p}=-\frac{1}{\kappa_{\mathfrak p}^2}\grad~\div\boldsymbol v,\quad
	% 	\boldsymbol v_{\mathfrak s}=-\frac{1}{\kappa_{\mathfrak s}^2}\grad^\bot\div^\bot\boldsymbol v, 
	\bv = \bv_\fp+\bv_\fs,\quad\boldsymbol v_{\mathfrak p}=-\frac{1}{\kappa_{\mathfrak p}^2}\nabla\nabla\cdot\boldsymbol v,\quad
	\boldsymbol v_{\mathfrak s}=-\frac{1}{\kappa_{\mathfrak s}^2}\nabla^\bot\nabla^\bot\cdot\boldsymbol v, 
\end{align}
where $\bv_\fp,\bv_\fs$ are known as the  compressional and shear wave components of $\boldsymbol v$, respectively. 
In addition,  the scattered field is  required to satisfy
the Kupradze--Sommerfeld radiation condition
\[
\lim_{r\to\infty}\sqrt{r}(\partial_r\boldsymbol v_{\mathfrak p}-\mathrm{i}\kappa_{\mathfrak p}\boldsymbol v_{\mathfrak p})=0,\quad
\lim_{r\to\infty}\sqrt{r}(\partial_r\boldsymbol v_{\mathfrak s}-\mathrm{i}\kappa_{\mathfrak s}\boldsymbol v_{\mathfrak s})=0,\quad r=|x|.
\]

The fundamental solution to the Navier equation \eqref{navierequ} in the two dimensional free space is given by
\begin{align}\label{greenfun}
	\Phi(x,y) = \frac{\mi\ks^2}{4\omega^2}H_0^{(1)}(\ks|x-y|)\bi+\frac{i}{4 \omega^2}\nabla\nabla^\top \left[H_0^{(1)}(\ks|x-y|)-H_0^{(1)}(\kp|x-y|)\right].
\end{align}

%The traction operator $T_\nu$ on $\Gamma$ is defined by 
%	\begin{eqnarray}
	%		T_\nu: = 2\mu \nu\cdot \nabla + \lambda \nu\nabla\cdot + \mu\nu\times {\bf curl}.
	%	\end{eqnarray}
Based on the Betti's formula \cite{alvesFarfieldOperatorElastic2002} and the boundary condition \eqref{rigidboundary}, we can represent the scattered field $\boldsymbol v$ through the double layer integral formulation, 
\begin{eqnarray}\label{singleintrep}
	\bv(x) =\int_{\Gamma} [T_{\nu(y)}\Phi(x,y)]^\top\bu(y) ds_y, \quad x\in 	\mathbb{R}^2\setminus\overline{D}.
\end{eqnarray}
As $|x|\rightarrow\infty$, the asymptotic behavior of the elastic scattered field $\bv$ is given by 
\begin{equation}\label{farfield}
	\bv(x) = \frac{e^{\mi \kp |x|}}{\sqrt{|x|}} \bv_{\fp,\infty}(\hx)\hx+ \frac{e^{\mi \ks |x|}}{\sqrt{|x|}} \bv_{\fs,\infty}(\hx)\hx^\bot +\mathcal{O}\left(|x|^{-3/2}\right),
\end{equation}
where   $\bv_{\fp,\infty}$ and $\bv_{\fs,\infty}$ are defined on the unit circle  $\ms$ with $\hx=x/|x|$ and known as the compressional and shear wave far field pattern, respectively. 
%	 We will usually consider the pair $(\bv_{\fp,\infty},\bv_{\fs,\infty})$ as an element of $\ml$, however.

Based on the asymptotic behavior of the fundamental solution \eqref{greenfun}, it can be verified that
\begin{align}
	%		\begin{split}
		\begin{aligned}
			\bv_{\fp,\infty}(\hx)\hx &=\frac{\me^{\mi\pi/4}}{\sqrt{8\pi\kp}} \frac{\kp^2}{\omega^2}\int_{\Gamma}[T_{\nu(y)}(\hx\otimes\hx)\me^{-\mi\kp \hx\cdot y}]^\top  \bu(y) ds_y,\\
			\bv_{\fs,\infty}(\hx)\hx^\bot &= \frac{\me^{\mi\pi/4}}{\sqrt{8\pi\ks}} \frac{\ks^2}{\omega^2}\int_{\Gamma}[T_{\nu(y)}(\bi-\hx\otimes\hx)\me^{-\mi\ks \hx\cdot y}]^\top  \bu(y) ds_y,
		\end{aligned}
		%		\end{split} 
\end{align}
where `$\otimes$' denotes the outer product of two vectors.
%	The forward  problem for the elastic scattering of multiple particles is: 
%	\begin{itemize}
	%		\item Given the incident wave $\uinc$ and the geometric information of $D_1, \cdots, D_M$, find the far field pattern $\bv_{\fp,\infty}(\hx)$ and $\bv_{\fs,\infty}(\hx)$ of the scattered field $\bv$. 
	%	\end{itemize}
The inverse problem of elastic scattering of multiple cavities is:
\begin{itemize}
	\item Based on the far field pattern $\bv_{\fp,\infty}(\hx)$ and $\bv_{\fs,\infty}(\hx)$ from different incident directions,  recover the geometric information of $D_1, D_2, \cdots, D_M$, including locations and shapes (see figure \ref{geometry}). 
\end{itemize}

In the following sections, we will first introduce the DORT method to the inverse elastic scattering problem and show the global focusing property in the general case. Then give a mathematical justification for the selective focusing of small and distant cavities.

\section{Global focusing based on TRM}

Let us introduce the vector space $\mathcal{L}^2:=[L^2(\ms)]^2$, which is defined on the unit circle $\ms$ and equipped with the inner product
\begin{eqnarray}\label{innerprod}
	(f,g) = \frac{\omega}{\kp}\int_{\ms} f_{\fp}(\alpha) \overline{g_{\fp}(\alpha)}ds_\alpha + \frac{\omega}{\ks}\int_{\ms}f_{\fs}(\alpha) \overline{g_{\fs}(\alpha)}ds_\alpha,
\end{eqnarray}
where $f=(f_{\fp}, f_{\fs})$ and $g = (g_{\fp},g_{\fs})$.	An element $f=(f_\fp,f_\fs)\in\ml$ will most often denote the compressional and shear far-field patterns, respectively. Denote the pair of far-field patterns $$\big(\bv_{\fp,\infty}(\hx,\alpha,f_{\fp}(\alpha),f_{\fs}(\alpha)), \bv_{\fs,\infty}(\hx,\alpha,f_{\fp}(\alpha),f_{\fs}(\alpha))\big)$$ of the corresponding scattered field by $\bv_\infty(\hx,\alpha,f_\fp(\alpha),f_\fs(\alpha))$, which is radiated by the incident wave  $$\boldsymbol{u^i}(x,\alpha,f_{\fp}(\alpha),f_{\fs}(\alpha)) = \boldsymbol{u^i}(x,\alpha,f_{\fp}(\alpha),0)+\boldsymbol{u^i}(x,\alpha,0,f_{\fs}(\alpha)).$$
%	Define the $L^2$ space 
%	\begin{eqnarray}
	%		L^2_{\fp} = \{f_{\fp}: \ms \rightarrow \mathbb{C}^2\ |\  f_{\fp}(\alpha)\times \alpha = 0 , |f_{\fp}|\in L^2(\ms)\}
	%	\end{eqnarray}
%	of the longitudinal vector fields on $\ms$ and the $L^2$ space 
%	\begin{eqnarray}
	%		L^2_{\fs} = \{f_{\fs}: \ms \rightarrow \mathbb{C}^2\ |\ f_{\fs}(\alpha)\cdot \alpha =0, |f_{\fs}|\in L^2(\ms)\}
	%	\end{eqnarray}
%	of the transversal vector fields, where $|\cdot|$ is the Euclidean norm in $\mathbb{C}^3$. 

The elastic Herglotz wave with kernel $f\in \ml $ has the form
\begin{eqnarray}\label{herglotz}
	\uincf(x) =\me^{-\mi\pi/4}\int_{\ms} \left(\sqrt{\frac{\kp}{\omega}}\me^{\mi \kp \alpha\cdot x} f_{\fp}(\alpha)\alpha +\sqrt{\frac{\ks}{\omega}} \me^{\mi \ks\alpha \cdot x} f_{\fs}(\alpha)\alpha^\bot\right) ds_{\alpha}, 
	%		\uincf(x) =\int_{\ms} \alpha\me^{\mi \kp \alpha\cdot x} f_{\fp}(\alpha) +\alpha^\bot\me^{\mi \ks\alpha \cdot x} f_{\fs}(\alpha)ds_{\alpha}, 
\end{eqnarray}
which is a superposition of plane waves and satisfies the Navier equation entirely. We remark here the elastic Herglotz wave is multiplied by some unusual coefficients to match the far field operator introduced in the following.  By linearity, the corresponding far field operator $$F:\ml \rightarrow \ml$$  due to the incident wave $\uincf(x)$ is defined by
\begin{eqnarray}
	F(f)(\hx) :=\me^{-\mi\pi/4}\int_{\ms}\bv_\infty(\hx,\alpha,\sqrt{\kp/\omega}f_\fp(\alpha),\sqrt{\ks/\omega}f_\fs(\alpha)) ds_{\alpha}, 
	%		F(f)(\hx) :=\int_{\ms}\bv_\infty(\hx,\alpha,f_\fp(\alpha),f_\fs(\alpha)) ds_{\alpha}, 
\end{eqnarray}
%	where $\bv_{\infty}(\hx,\alpha,f_{\fp}(\alpha),f_{\fs}(\alpha)) =\left(\bv_{\fp,\infty}(\hx,\alpha,f_{\fp}(\alpha),f_{\fs}(\alpha)), \bv_{\fs,\infty}(\hx,\alpha,f_{\fp}(\alpha),f_{\fs}(\alpha))\right)$.
for $f=(f_\fp,f_\fs)$. By superposition, $F(f_\fp,f_\fs)$ is the far-field pattern by the scattering of the elastic Herglotz incident wave with kernel $f\in\ml$.
It is easy to see that the far field operator $F$ is compact since the kernel is smooth. Using the reciprocity relation of elastic wave, one can show the following result~\cite{arensLinearSamplingMethods2001}.
\begin{theorem}
	The far field operator $F:\ml \rightarrow \ml$ is a compact and normal operator. Its adjoint operator $F^*:\ml\rightarrow\ml$ with respect to the inner product \eqref{innerprod} is given by
	$$F^*f = \overline{RFR\overline{f}}, \quad \forall f \in \ml,$$
	where $R$ is the symmetry operator defined by $Rf(\alpha)=f(-\alpha), \alpha\in \ms$.
\end{theorem}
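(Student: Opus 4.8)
The plan is to prove the three assertions—compactness, the adjoint formula, and normality—in that order, with the elastic reciprocity relation carrying the main weight for the last two. First I would write $F$ explicitly as a $2\times2$ matrix integral operator. Since $\bv_\infty(\hx,\alpha,f_\fp,f_\fs)$ depends linearly on the pair $(f_\fp,f_\fs)$, there is a matrix-valued kernel $\mathbb{K}(\hx,\alpha)=\big(K_{ab}(\hx,\alpha)\big)_{a,b\in\{\fp,\fs\}}$ with $(Ff)(\hx)=\int_\ms \mathbb{K}(\hx,\alpha)f(\alpha)\,ds_\alpha$, where $K_{ab}$ is the $a$-component of the far field generated by a $b$-type incident plane wave of unit strength. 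Reading off the explicit formulas for $\bv_{\fp,\infty}$ and $\bv_{\fs,\infty}$ shows that each entry $K_{ab}$ is a smooth (indeed real-analytic) function on $\ms\times\ms$. Hence $\mathbb{K}\in[L^2(\ms\times\ms)]^{2\times2}$ and $F$ is Hilbert--Schmidt, which yields compactness immediately.

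For the adjoint identity I would compute $F^*$ directly from the weighted inner product \eqref{innerprod} and then invoke reciprocity. Writing $(Ff,g)=(f,F^*g)$ with the weights $\omega/\kp$ and $\omega/\ks$ attached to the $\fp$- and $\fs$-components shows that the kernel of $F^*$ is, up to these weight factors, the conjugate transpose of $\mathbb{K}$ with $\hx$ and $\alpha$ interchanged, i.e. $\overline{\mathbb{K}(\alpha,\hx)}^\top$. The elastic reciprocity relation—relating the $a$-component of the far field in direction $\hx$ for a $b$-type incidence along $\alpha$ to the $b$-component in direction $-\alpha$ for an $a$-type incidence along $-\hx$—takes the matrix form $\mathbb{K}(\alpha,\hx)^\top=\mathbb{K}(-\hx,-\alpha)$, which rewrites the conjugate-transposed kernel as $\overline{\mathbb{K}(-\hx,-\alpha)}$. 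The constants $\me^{-\mi\pi/4}$, $\sqrt{\kp/\omega}$, $\sqrt{\ks/\omega}$ built into the Herglotz wave \eqref{herglotz} and into $F$ are chosen precisely so that the weight factors cancel; what remains is exactly $F^*f=\overline{RFR\overline f}$, with $R$ the reflection $\alpha\mapsto-\alpha$.

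To obtain normality I would combine the adjoint identity with an energy relation. Applying Betti's formula to $\bv$ and its conjugate on a large disc, using the Kupradze--Sommerfeld radiation condition together with the traction-free condition \eqref{rigidboundary} on $\Gamma$, yields a generalized optical theorem that in operator form reads $F-F^*=\mi\beta\,F^*F$ for a real constant $\beta$ fixed by the normalization. Introducing the antiunitary involution $P:=RC=CR$, with $C$ complex conjugation, the adjoint identity becomes $F^*=PFP$. Conjugating the energy relation by $P$—noting that $P$ reverses the sign of the imaginary scalar and that $PF^*FP=(PF^*P)(PFP)=FF^*$—produces the companion identity $F-F^*=\mi\beta\,FF^*$. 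Comparing the two gives $\mi\beta\,F^*F=\mi\beta\,FF^*$, hence $F^*F=FF^*$ and $F$ is normal.

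The main obstacle I anticipate is the careful bookkeeping in the elastic reciprocity relation. Unlike the scalar acoustic case, the kernel $\mathbb{K}$ is a $2\times2$ matrix mixing the compressional and shear polarizations along the directions $\hx,\hx^\bot$, and one must verify that $R$ acts correctly on both the longitudinal part (carried by $\alpha$) and the transversal part (carried by $\alpha^\bot$), and that the weights $\omega/\kp,\omega/\ks$ exactly match the wave-type-dependent prefactors. Tracking these signs and conjugations consistently is what makes the clean formula $F^*f=\overline{RFR\overline f}$ emerge—rather than a version cluttered with $\kp/\ks$ factors—and it is also what guarantees that the extinction constant $\beta$ is real, so that the $P$-conjugation argument closes and normality follows.
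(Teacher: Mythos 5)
Your proposal is correct and follows essentially the route the paper relies on: the paper gives no proof of its own but appeals to the elastic reciprocity relation and cites Arens (2001), where precisely your three steps appear — compactness from the smooth kernel, the adjoint formula from reciprocity with the weighted inner product absorbing the $\kp/\ks$ factors, and normality from combining $F^*=\overline{RFR\overline{\,\cdot\,}}$ with the generalized optical theorem. No gaps to report.
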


It is worth mentioning that similar result also holds for the acoustic and electromagnetic scattering~\cite{coltonInverseAcousticElectromagnetic2019}. We are now able to define the time reversal operator $T$.  First let us measure the far field of the scattered field due to the Herglotz wave $\uincf$ with $f\in \ml$, and then use the conjugate of the far field as the kernel $g$ of a new Herglotz wave. In other words,
\begin{eqnarray*}
	g = \overline{RFf}.
\end{eqnarray*}   
The symmetry operator $R$ is used here in order to reemit the wave from the opposite of the measured direction. The time reversal operator $T$ is then obtained by iterating this cycle twice
\begin{eqnarray}
	T f = \overline{RF g} = \overline{RF \overline{RFf}}.
\end{eqnarray} 
It holds the following property for the time reversal operator $T$.
\begin{theorem}\label{normal}
	The time reversal operator $T$ is compact, self-adjoint and positive. It is defined as an operator from $\ml$ to itself with
	\begin{eqnarray}
		Tf = FF^*f = F^*F f. 
	\end{eqnarray}
	The nonzero eigenvalues of $T$ are exactly positive numbers
	$|\lambda_1|^2\ge |\lambda_2|^2 \ge \cdots >0$
	where the sequence $(\lambda_j)_{j\ge 1}$ denotes the nonzero complex eigenvalue of the far field operator $F$. The corresponding eigenfunctions $(f_j)_{j\ge 1}$ of $F$ are exactly the eigenfunctions of $T$. If non-trivial solutions to the Navier equation in $D$ with traction free boundary condition do not exist, then $(f_j)_{j\ge 1}$ form a complete orthonormal system in $L^2_{\fp}(\ms) \times L^2_{\fs}(\ms) $. 
\end{theorem}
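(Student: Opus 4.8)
The plan is to reduce everything to the spectral theory of the compact normal operator $F$ from the previous theorem, after first establishing the algebraic identity $T=F^*F$. I would substitute the adjoint formula $F^*f=\overline{RFR\overline{f}}$ into the definition $Tf=\overline{RF\overline{RFf}}$. Writing $\overline{F}$ for the integral operator obtained by conjugating the kernel of $F$, and using that $R$ is real and involutive ($\overline{R}=R$, $R^2=I$) together with $\overline{Fg}=\overline{F}\,\overline{g}$ and the fact that conjugation commutes with the reflection $R$, a direct computation gives $F^*=R\overline{F}R$ and $Tf=R\overline{F}RFf=F^*Ff$. Since the previous theorem asserts $F$ is normal, $F^*F=FF^*$, which yields $Tf=FF^*f=F^*Ff$ and simultaneously the self-adjointness of $T$. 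Positivity is immediate from $(Tf,f)=(F^*Ff,f)=(Ff,Ff)=\|Ff\|^2\ge 0$, and compactness follows because $F$, hence $F^*$ and the product $F^*F$, is compact.

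For the spectral claims I would invoke the spectral theorem for compact normal operators applied to $F$: there is an orthonormal system of eigenfunctions $(f_j)$ with $Ff_j=\lambda_j f_j$, $\lambda_j\in\mathbb{C}\setminus\{0\}$, with $|\lambda_j|$ decreasing to $0$, spanning $\overline{\mathrm{Range}(F)}$. The key elementary fact I would use is that an eigenfunction of a normal operator $F$ with eigenvalue $\lambda$ is automatically an eigenfunction of $F^*$ with eigenvalue $\overline{\lambda}$; applying $F^*$ then gives $Tf_j=F^*Ff_j=\lambda_j\overline{\lambda_j}f_j=|\lambda_j|^2 f_j$. Hence each $f_j$ is an eigenfunction of $T$ with positive eigenvalue $|\lambda_j|^2$, and ordering these reproduces $|\lambda_1|^2\ge|\lambda_2|^2\ge\cdots>0$. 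Conversely, the eigenspace of $T$ for a nonzero eigenvalue $\mu$ is the sum of the $F$-eigenspaces for those $\lambda$ with $|\lambda|^2=\mu$, which are spanned by the $f_j$; this shows the two operators share the eigensystem $(f_j)$.

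It remains to prove completeness of $(f_j)$ under the stated uniqueness hypothesis. Since $F$ is normal, $\ker F=\ker F^*$, and the eigenfunctions for nonzero eigenvalues span $(\ker F)^\perp$; thus $(f_j)$ is complete in $\ml$ precisely when $F$ is injective. I would establish injectivity by contradiction: if $Ff=0$ for some $f=(f_\fp,f_\fs)\neq 0$, then the far-field pattern of the scattered field produced by the elastic Herglotz incident wave $\uincf$ vanishes, so by Rellich's lemma and unique continuation the scattered field $\bv$ vanishes identically in $\mr\setminus\od$. Consequently $\uincf$ satisfies the traction-free condition $T_\nu\uincf=0$ on $\Gamma$ while solving the Navier equation in $D$, so that $\uincf|_D$ is a non-trivial traction-free interior solution, its non-triviality following from the injectivity of the Herglotz map. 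This contradicts the hypothesis, so $F$ is injective and $(f_j)$ forms a complete orthonormal system.

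The main obstacle I anticipate is the last paragraph: carefully justifying that the vanishing far field forces a genuine non-trivial interior traction-free solution, which requires combining Rellich's lemma, unique continuation for the Navier system, and the injectivity of the Herglotz map to guarantee $\uincf|_D\not\equiv 0$. The algebraic identity $T=F^*F$ and the eigenvalue correspondence, by contrast, are routine once the adjoint formula and normality from the previous theorem are in hand.
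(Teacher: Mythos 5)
Your proposal is correct and takes essentially the same route as the paper, which in fact omits the proof entirely and refers the reader to Arens (2001): that reference's argument is precisely your reduction, namely deriving $T=F^*F=FF^*$ from the adjoint formula and normality, invoking the spectral theorem for compact normal operators to identify the eigensystem of $T$ with that of $F$, and proving completeness by showing $F$ is injective via Rellich's lemma, unique continuation, and the injectivity of the Herglotz map under the stated no-interior-eigenfunction hypothesis.
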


Readers are referred to \cite{arensLinearSamplingMethods2001} for a detailed proof. The time reversal method is to illuminate an obstacle with Herglotz waves with kernel $f$ corresponding to an eigenvector of $F$ (or $T$) with non-zero eigenvalue. In particular, the Herglotz wave generated by $f$ with $\lambda\ne 0$ will automatically focus on the obstacles, as shown by the following theorem.
\begin{theorem}\label{globalf}
	Let $\lambda\ne 0$ be an eigenvalue of $F$ and $f\in \ml$ be an eigenvector of $F$ associated with $\lambda$. Denote $\bu_{f}$ the total elastic field due to the Herglotz incident wave $\uincf$. 
	%		 with kernel $f=(f_\fp(\alpha),f_\fs(\alpha))\in\ml$
	Then, the Herglotz incident wave $\uincf$ with kernel $f\in\ml$ has the following form
	\begin{eqnarray*}
		\uincf =&  \frac{1}{\lambda\sqrt{8\pi\kp}}\left(\frac{\kp}{\omega}\right)^\frac{5}{2} \int_{\Gamma_D} \big[T_\nu(y)E(\kp,x,y)\big]^\top  \bu_{f} ds_y+ \frac{1}{\lambda\sqrt{8\pi\ks}}\left(\frac{\ks}{\omega}\right)^\frac{5}{2} \int_{\Gamma_D}\big[T_\nu(y)H(\ks,x,y)\big]^\top\bu_{f} ds_y,
	\end{eqnarray*}
	where $E(\kp,x,y)$ and $H(\ks,x,y)$ are given in \eqref{idenE} and \eqref{idenH}.
\end{theorem}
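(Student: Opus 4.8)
The plan is to combine the eigenvalue relation $Ff=\lambda f$ with the boundary integral representation of the far field patterns, and then to resolve the resulting superposition over incident directions into the two kernels $E(\kp,x,y)$ and $H(\ks,x,y)$.

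First I would use that $f=(f_\fp,f_\fs)$ is an eigenvector of $F$ with eigenvalue $\lambda\neq 0$, so $f=\lambda^{-1}Ff$. By the definition of $F$ and superposition, $Ff$ is exactly the pair of far field patterns $(\bv_{\fp,\infty},\bv_{\fs,\infty})$ radiated by the total field $\bu_f$ associated with the Herglotz incident wave $\uincf$; the explicit factor $\me^{-\mi\pi/4}$ in $F$ pairs with the $\me^{\mi\pi/4}$ inside the far field formulas so that $f_\fp(\alpha)=\lambda^{-1}\bv_{\fp,\infty}(\alpha)$ and $f_\fs(\alpha)=\lambda^{-1}\bv_{\fs,\infty}(\alpha)$. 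Substituting the integral formulas for $\bv_{\fp,\infty}(\hx)\hx$ and $\bv_{\fs,\infty}(\hx)\hx^\bot$ and using that the compressional far field is parallel to $\hx$ while the shear far field is parallel to $\hx^\bot$, I obtain for every $\alpha\in\ms$
\[
f_\fp(\alpha)\alpha=\frac{1}{\lambda}\frac{\me^{\mi\pi/4}}{\sqrt{8\pi\kp}}\frac{\kp^2}{\omega^2}\int_\Gamma [T_{\nu(y)}(\alpha\otimes\alpha)\me^{-\mi\kp\alpha\cdot y}]^\top\bu_f(y)\,ds_y,
\]
together with the analogous identity for $f_\fs(\alpha)\alpha^\bot$ in which $\kp$ is replaced by $\ks$ and $\alpha\otimes\alpha$ by $\bi-\alpha\otimes\alpha=\alpha^\bot\otimes\alpha^\bot$ (the last equality being the 2D orthonormal-basis identity).

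Next I would insert these two expressions into the defining formula for $\uincf$. This produces a double integral over $(\alpha,y)\in\ms\times\Gamma$; since the integrand is smooth and $\ms,\Gamma$ are compact, Fubini's theorem justifies interchanging the order of integration, and the phases $\me^{-\mi\pi/4}$ and $\me^{\mi\pi/4}$ cancel. Consolidating constants, the compressional coefficient $\sqrt{\kp/\omega}\cdot(8\pi\kp)^{-1/2}\cdot\kp^2\omega^{-2}$ collapses to $(8\pi)^{-1/2}\kp^{2}\omega^{-5/2}=(8\pi\kp)^{-1/2}(\kp/\omega)^{5/2}$, which is precisely the claimed prefactor, and likewise for the shear term. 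Because the traction operator $T_{\nu(y)}$ differentiates only in $y$, it commutes with the $\alpha$-integration, so I can pull it outside and recognize the remaining angular integrals as
\[
E(\kp,x,y)=\int_\ms(\alpha\otimes\alpha)\,\me^{\mi\kp\alpha\cdot(x-y)}\,ds_\alpha,\qquad H(\ks,x,y)=\int_\ms(\bi-\alpha\otimes\alpha)\,\me^{\mi\ks\alpha\cdot(x-y)}\,ds_\alpha,
\]
i.e. the kernels \eqref{idenE} and \eqref{idenH}. Assembling the compressional and shear pieces yields the stated representation of $\uincf$.

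The main obstacle is bookkeeping rather than conceptual: one must track the $2\times2$ tensor and transpose structure carefully so that $T_{\nu(y)}$ acts column-wise on the correct matrix and the contraction with $\bu_f$ stays well defined after commuting the operator with the integral, and one must verify the constant consolidation and the identity $\bi-\alpha\otimes\alpha=\alpha^\bot\otimes\alpha^\bot$. The one technical point worth stating explicitly is the legitimacy of differentiating under the integral sign when pulling $T_{\nu(y)}$ through $\int_\ms$, which follows from the analyticity of the plane-wave integrand in $y$ and the compactness of $\ms$ and $\Gamma$.
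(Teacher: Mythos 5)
Your proposal is correct and follows essentially the same route as the paper's own proof: the eigenvalue relation $f=\lambda^{-1}Ff$, substitution of the boundary-integral formulas for $\bv_{\fp,\infty}$ and $\bv_{\fs,\infty}$, interchange of the $\alpha$- and $y$-integrations to identify $\bu_f$, and reassembly inside the Herglotz definition with the same consolidation of constants. The only step you omit is the closed-form evaluation of the angular kernels via the identity $J_n(z)=\frac{(-\mi)^n}{\pi}\int_0^\pi \me^{\mi z\cos\theta}\cos(n\theta)\,d\theta$, which the paper carries out because the statement cites the explicit Bessel expressions \eqref{idenE}--\eqref{idenH} rather than just the integral definitions.
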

\begin{proof}
	Since $f=(f_{\fp},f_{\fs})\in \ml$ is an eigenvector of $F$ with eigenvalue $\lambda\ne0$, it holds
	\begin{align*}
		f_{\fp} (\hx)\hx = & \frac{\me^{-\mi\pi/4}}{\lambda}\int_{\ms}\bv_{\fp,\infty}(\hx,\alpha,\sqrt{\kp/\omega}f_\fp(\alpha),\sqrt{\ks/\omega}f_\fs(\alpha)) ds_{\alpha} \notag \\
		= & \frac{1}{\lambda\sqrt{8\pi\kp}}\frac{\kp^2}{\omega^2}
		\int_{\mathbb{S}} \int_{\Gamma_D}[T_{\nu(y)}(\hx\otimes\hx)\me^{-\mi\kp \hx\cdot y}]^\top \bu(y,\alpha,\sqrt{\kp/\omega}f_\fp(\alpha),\sqrt{\ks/\omega}f_\fs(\alpha)) ds_yds_\alpha \notag \\
		=&  \frac{1}{\lambda\sqrt{8\pi\kp}}\frac{\kp^2}{\omega^2}\int_{\Gamma_D}[T_{\nu(y)}(\hx\otimes\hx)\me^{-\mi\kp \hx\cdot y}]^\top \int_{\mathbb{S}}\bu(y,\alpha,\sqrt{\kp/\omega}f_\fp(\alpha),\sqrt{\ks/\omega}f_\fs(\alpha)) ds_\alpha ds_y\notag \\
		=&  \frac{\me^{\mi\pi/4}}{\lambda\sqrt{8\pi\kp}}\frac{\kp^2}{\omega^2}\int_{\Gamma_D}[T_{\nu(y)}(\hx\otimes\hx)\me^{-\mi\kp \hx\cdot y}]^\top\bu_{f} ds_y,
	\end{align*}
	where  $\bu_{f}=\me^{-\mi\pi/4}\int_{\mathbb{S}} \bu(y,\alpha,\sqrt{\kp/\omega}f_\fp(\alpha),\sqrt{\ks/\omega}f_\fs(\alpha)) ds_\alpha$ is the total elastic field generated by the Herglotz wave $\uincf$. 
	
	Similarly, it holds
	\begin{eqnarray}
		f_{\fs} (\hx)\hx^\bot  =  \frac{\me^{\mi\pi/4}}{\lambda\sqrt{8\pi\ks}}\frac{\ks^2}{\omega^2}\int_{\Gamma_D}[T_{\nu(y)}(\bi-\hx\otimes\hx)\me^{-\mi\ks \hx\cdot y}]^\top \bu_{f} ds_y.
	\end{eqnarray}
	Now plugging $f$ into the definition of Herglotz wave \eqref{herglotz}, we obtain
	\begin{eqnarray}
		\uincf(x) &=& \me^{-\mi\pi/4}\int_{\ms} \left(\sqrt{\frac{\kp}{\omega}}\me^{\mi \kp \alpha\cdot x} f_{\fp}(\alpha)\alpha +\sqrt{\frac{\ks}{\omega}} \me^{\mi \ks\alpha \cdot x} f_{\fs}(\alpha)\alpha^\bot\right)ds_{\alpha}\notag \\
		&=&  \frac{1}{\lambda\sqrt{8\pi\kp}}\left(\frac{\kp}{\omega}\right)^\frac{5}{2} \int_{\Gamma_D}\left(  T_{\nu(y)}\int_{\mathbb{S}}\alpha \alpha ^\top\me^{\mi\kp \alpha \cdot (x-y)}ds_\alpha  \right)^\top\bu_{f} ds_y \notag \\
		&& +\frac{1}{\lambda\sqrt{8\pi\ks}}\left(\frac{\ks}{\omega}\right)^\frac{5}{2} \int_{\Gamma_D}\left(  T_{\nu(y)}\int_{\mathbb{S}}(\bi-\alpha \alpha ^\top )\me^{\mi\ks \alpha\cdot(x-y)}  ds_\alpha  \right)^\top\bu_{f} ds_y. \notag
	\end{eqnarray} 
	Let
	\begin{align}\label{idenEH}
		E(\kp,x,y)=\int_{\ms} \alpha\alpha^\top\me^{\mi \kp \alpha\cdot (x-y)}  ds_\alpha, \quad 
		H(\ks,x,y) = \int_{\ms} (\bi-\alpha\alpha^\top)\me^{\mi \ks \alpha\cdot (x-y)}  ds_\alpha.
	\end{align}
	According to the identity~\cite[Equation 10.9.2]{olverNISTHandbookMathematical2010}
	\begin{eqnarray}\label{idenbessel}
		J_n(z) =\frac{(-\mi)^n}{\pi}\int_0^\pi e^{\mi z \cos\theta}\cos(n\theta) d\theta,
		\quad n\in\mathbb{Z},
	\end{eqnarray}
	where $J_n$ is the Bessel function of the first kind, we can derive that
	\begin{eqnarray}\label{idenE}
		E(\kp,x,y) =\pi Q^\top\begin{bmatrix}
			J_0(z)-J_2(z) & 0 \\
			0 & J_0(z)+J_2(z)\\
		\end{bmatrix}Q, 
	\end{eqnarray}
	where $z=\kp|x-y|$ and $Q$ is an orthogonal matrix that maps $x-y$ to $[|x-y|,0]^\top$. Using \eqref{idenbessel} again with $n=0$, we also obtain
	\begin{eqnarray}\label{idenH}
		H(\ks,x,y) =2\pi J_0(\ks|x-y|)\bi- E(\ks,x,y).
	\end{eqnarray}
\end{proof}

Based on the asymptotic property of $J_0$ and $J_2$, Theorem \ref{globalf} shows the incident wave $\uincf$ generated by the eigenfunction $f$ will focus on the unknown obstacles and decay as $1/\sqrt{r}$ where $r$ is the distance from the obstacle, which is the essential property of the time reversal method. Theorem \ref{globalf} also shows one can use only one wave (either compressional or shear wave) to focus the obstacles. However, in order to obtain the eigenfunction $f$, we still need the far field pattern of both waves. Meanwhile, since the time reversal operator $T$ is self-adjoint, by min-max principle, it holds that
$$|\lambda_1|^2=\sup_{f\in \ml,||f||^2_2=1}||Ff||^2_2.$$
Therefore, the eigenfunction of the largest eigenvalue will maximize the illumination of cavities. In general, for cavities with non-negligible interactions, it is difficult to obtain the explicit form of significant eigenvalues, as well as the eigenfunctions of the far field operator $F$ with respect to the locations of cavities. However, if the cavities are small and distant, selective focusing~\cite{hazardSelectiveAcousticFocusing2004} on an individual one can be obtained when the interaction among cavities becomes weak, as shown in the following sections.

\section{Selective focusing of multiple elastic cavities }

In this section, we are concerned with the relation between the number of cavities contained in the medium and the number of significant eigenvalues of the far field operator $F$. Such a relation is usually nonlinear when the size of the cavity is  large or even comparable to the wavelength. However, when cavities are small and distant enough so that multiple scattering is negligible, we will show that that there are five significant eigenvalues associated with each cavity, and the corresponding eigenfunction will selectively focus on that cavity. To intuitively illustrate the phenomenon, we start by the analysis for a single small disk and then extend the results to general shaped cavities.

\subsection{Elastic scattering of a single discal cavity}\label{41}
In this subsection, we deal with the case where the scatterer denoted by $S_0$ is a disk at the origin  of radius $R>0$.
For this particular geometry, an explicit formula can be obtained for the eigenvalues of the far field mapping and thus for those of the time reversal operator. The results of this subsection can be seen as a natural extension from acoustic to elastic scattering based on the classical Mie theory~\cite{coltonInverseAcousticElectromagnetic2019}.

For a given point $x=(x_1,x_2)$, denote $(r,\theta)$ the polar coordinates of $x$. Let
$J_n(r)$ and $H_n^{(1)}(r)$ respectively
be the first kind Bessel and Hankel function of order $n$.
Define the scalar functions
\begin{eqnarray*}
	u_{n}^\kappa (x) = J_n(\kappa r)\me^{\mi n\theta },v_{n}^\kappa (x) = H_n^{(1)}(\kappa r)\me^{\mi n\theta },
\end{eqnarray*}
which are called \textit{cylindrical wave functions} and satisfy the two dimensional Helmholtz equation with exceptional point at the origin for $v^{\kappa}_{n}(x)$.

According to the Helmholtz decomposition \eqref{helmdec}, the incoming field for the disk $S_0$ can be expanded as
\begin{eqnarray}\label{inspan}
	\uinc(x) = \sum_{n=-\infty}^{\infty}a_{n}\nabla u_{n}^{\kp} (x)+b_{n}\nabla^\bot u_{n}^{\ks} (x),
\end{eqnarray}
where $\{a_{n},b_{n}\}$ are called the \textit{incoming expansion coefficients} of $\uinc$ on $S_0$.

Note that the shear part $\boldsymbol{u^i_\fs}$ and the compressional part $\boldsymbol{u^i_\fp}$ of an incident wave $\uinc$ in \eqref{inspan} are
\begin{eqnarray*}
	\boldsymbol{u^i_\fs}&=&-\frac{1}{\kappa_{\fs}^2}\nabla^\bot\nabla^\bot\cdot\uinc=
	\sum_{n=-\infty}^{\infty}b_{n}\nabla^\bot u_{n}^{\ks} (x),\\ \boldsymbol{u^i_\fp}&=&-\frac{1}{\kappa_{\fp}^2}\nabla\nabla\cdot\uinc= \sum_{n=-\infty}^{\infty}a_{n}\nabla u_{n}^{\kp} (x).
\end{eqnarray*}
%	Recalling the relationship between the Cartesian and polar coordinates for gradient, it holds
%	\begin{align}
	%		\nabla u_{n}^{\kp} (x)=\kp J_n'(\kp r)\me^{\mi n\theta}\hat{r}+\frac{\mi n}{r}J_n(\kp r)\me^{\mi n\theta}\hat{\theta},\\
	%		\nabla^\bot u_{n}^{\ks} (x)=\ks J_n'(\ks r)\me^{\mi n\theta}\hat{\theta}-\frac{\mi n}{r}J_n(\ks r)\me^{\mi n\theta}\hat{r}.
	%	\end{align}
For the plane wave incidence, explicit expression for these coefficients can be obtained through the vector analogue of the Jacobi–Anger expansion\cite{coltonInverseAcousticElectromagnetic2019}. 
%Detailed expression is given in the appendix \ref{appA}.
The kernel $f\in\ml$ can be expanded by
\begin{align*}
	f = (f_\fp,f_\fs)=\sum_{n=-\infty}^{\infty}(f_{n}^a\me^{\mi n\theta},f_{n}^b\me^{\mi n\theta}).
\end{align*}
Based on the plane wave expansion, the \textit{incoming expansion coefficients} for a Herglotz wave with kernel $f\in\ml$
%	 $\me^{-\mi\pi/4}(\sqrt{\kp/\omega}f_\fp(\alpha),\sqrt{\ks/\omega}f_\fs(\alpha))$ 
are simply
\begin{eqnarray}\label{hegexpan}
	a_{n}=-\frac{2\pi\mi^{n+1}\me^{-\mi\pi/4}}{\sqrt{\kp\omega}}f_n^a,\quad b_{n}=-\frac{2\pi\mi^{n+1}\me^{-\mi\pi/4}}{\sqrt{\ks\omega}}f_n^b.
\end{eqnarray}
%Since 
%\[T_r u=2\mu\frac{\partial u}{\partial r}+\hat{r}\lambda(\nabla\cdot u)-\hat{\theta}\mu\nabla^\bot\cdot u\]
%\begin{align}
%		\frac{\nabla u_{n}^{\kp} (x)}{\partial r} =\kp^2 J_n''(\kp r)\me^{\mi n\theta}\hat{r}+\mi n\frac{\kp r J_n'(\kp r)-J_n(\kp r)}{r^2}\me^{\mi n\theta}\hat{\theta}\\
%		\frac{\nabla^\bot u_{n}^{\ks} (x)}{\partial r} =\ks^2 J_n''(\ks r)\me^{\mi n\theta}\hat{\theta}-\mi n\frac{\ks r J_n'(\ks r)-J_n(\ks r)}{r^2}\me^{\mi n\theta}\hat{r}\\
%	\nabla\cdot(\nabla u_{n}^{\kp} (x))=-\kp^2 J_n(\kp r)\me^{\mi n\theta}\\
%	\nabla\cdot(\nabla^\bot u_{n}^{\ks} (x))=0\\
%	\nabla^\bot\cdot (\nabla u_{n}^{\kp} (x))=0\\
%	\nabla^\bot\cdot (\nabla^\bot u_{n}^{\ks} (x))=-\ks^2 J_n(\ks r)\me^{\mi n\theta}
%\end{align}
%We consider the traction 
%\begin{align*}
%	T_r(\uinc)|_{r=R}=\sum_{n=-\infty}^{\infty}&\bigg(a_n(2\mu\kp^2 J''_n
%	(\kp R)-\lambda\kp^2J_n
%	(\kp R))\me^{in\theta}\hat{r}\\
%	&+a_n2\mu in \frac{\kp R J'_n
	%		(\kp R)-J_n
	%		(\kp R)}{r^2}\me^{in\theta}\hat{\theta}\\
%	&-b_n2\mu in\frac{\ks R J'_n
	%		(\ks R)-J_n
	%		(\ks R)}{r^2}\me^{in\theta}\hat{r}\\
%	&+b_n(2\mu \ks^2J_n''
%	(\ks R)+\mu\ks^2J_n(\ks R))\me^{in\theta}\hat{\theta}\bigg)
%\end{align*}
After the incidence of $\uinc$, the scattered field $\bv$ in the exterior of $S_0$ is given by
\begin{eqnarray}\label{scatspan}
	\bv =\sum_{n=-\infty}^{\infty}\alpha_{n}\nabla v_{n}^{\kp} (x)+\beta_{n}\nabla^\bot v_{n}^{\ks} (x),
\end{eqnarray} 
where $\{\alpha_{n},\beta_{n}\}$ are referred as the \textit{outgoing expansion coefficients}. The linear matrix that maps all the \textit{incoming expansion coefficients} $\{a_{n},b_{n}\}$ to all the \textit{outgoing expansion coefficients} $\{\alpha_{n},\beta_{n}\}$, $n\in\mathbb{Z}$, of an elastic scatterer is referred as the scattering matrix $\mathcal{S}$~\cite{laiFastInverseElastic2022}. For the disk $S_0$, the scattering matrix $\mathcal{S}$ is block diagonal with diagonal blocks $\mathcal{S}_{n}$, of which the explicit expression is given in the appendix \ref{appA}.

Using the asymptotic property of Hankel functions~\cite{olverNISTHandbookMathematical2010},
%	\begin{align}
	%		\begin{aligned}
		%			H_n^{(1)}(z)=\sqrt{\frac{2}{\pi z}}\me^{\mi(z-\frac{n\pi}{2}-\frac{\pi}{4})}\left(1+\oo\left(\frac{1}{z}\right)\right),\quad z\rightarrow\infty,\\
		%			H_n^{(1)'}(z)=\sqrt{\frac{2}{\pi z}}\me^{\mi(z-\frac{n\pi}{2}+\frac{\pi}{4})}\left(1+\oo\left(\frac{1}{z}\right)\right),\quad z\rightarrow\infty,
		%		\end{aligned}
	%	\end{align}
the far field pattern for the scattered field $\bv$ based on the \textit{outgoing expansion coefficients} is given by
\begin{eqnarray}\label{farexpan}
	\begin{aligned}
		\left(\bv_{\fp,\infty}, \bv_{\fs,\infty}\right)= \left(\sum_{n=-\infty}^{\infty} (-\mi)^n\alpha_n\sqrt{\frac{2\kp}{\pi }}\me^{\mi\frac{\pi}{4}}\me^{\mi n\theta}\hat{r},  \sum_{n=-\infty}^{\infty} (-\mi)^n\beta_n\sqrt{\frac{2 \ks}{\pi }}\me^{\mi\frac{\pi}{4}}\me^{\mi n\theta}\hat{\theta}\right).
	\end{aligned} 
\end{eqnarray}
By using the scattering matrix $\mathcal{S}$ of the sphere $S_0$, the far field operator $F$ can also be formulated as a block diagonal matrix, where the $n$-th block $F_{n}$ is 
\begin{eqnarray}
	F_{n} = D^{scat}_{n}\mathcal{S}_{n} D^{inc}_{n}, \quad n\in\mathbb{Z}.
\end{eqnarray} 
From equations \eqref{hegexpan} and \eqref{farexpan}, $D^{inc}_{n}$ and $D^{scat}_{n}$ are $2\times2$ diagonal matrices given by 
\begin{align}
	D^{inc}_{n} =
	\begin{bmatrix}
		-\frac{2\pi\mi^{n+1}\me^{-\mi\pi/4}}{\sqrt{\kp\omega}} & 0 \\
		0 & -\frac{2\pi\mi^{n+1}\me^{-\mi\pi/4}}{\sqrt{\ks\omega}}
	\end{bmatrix},\quad
	D^{scat}_{n} =
	\begin{bmatrix}
		(-\mi)^n\sqrt{\frac{2\kp}{\pi }}\me^{\mi\frac{\pi}{4}}  & 0  \\
		0 & (-\mi)^n\sqrt{\frac{2 \ks}{\pi }}\me^{\mi\frac{\pi}{4}}
	\end{bmatrix}.
\end{align}
If the radius $R$ of the sphere $S_0$ is sufficiently small, we are able to obtain the following result.
%	\begin{theorem}\label{smalleig}
	%		When $R\rightarrow 0,|n|\geq 2$, the two eigenvalues of $F_{n}$, given by $\lambda^{i}_{n}, i = 1,2$, satisfy
	%		\begin{eqnarray}\label{eigexp}
		%			\begin{split}
			%				\lambda^{1}_{n} &= \frac{-8c_nd_n\mi |n|(|n|+1)(|n|-1)\mu^2(\kp^{|n|}+\ks^{|n|})}{\pi\kp^{|n|}\ks^{|n|}}R^{2|n|-2}+\mathcal{O}(R^{2|n|}),\\
			%				\lambda^{2}_{n} &= \frac{2c_nd_n\mi\mu^2|n|^2\kp^{|n|}\ks^{|n|}}{\pi}R^{2|n|}+\mathcal{O}(R^{2|n|+2}),
			%			\end{split}
		%		\end{eqnarray}
	%		where 
	%		$c_n = \frac{\pi^2\kp^{|n|}\ks^{|n|} }{2^{2{|n|}+1}\mu(\ks^2\mu {|n|}+\kp^2(\lambda+\mu-\lambda {|n|}))\Gamma({|n|}-1)\Gamma({|n|}+2)},d_n=-\sqrt{\frac{8\pi}{\omega}}\mi$. When $n=0$, $\lambda_0^1=\oo(R^2),\lambda_0^2=\oo(R^4)$, when $|n|=1$, $\lambda_1^1=\lambda_{-1}^1=\oo(R^2),\lambda_1^2=\lambda_{-1}^2=\oo(R^4)$. The eigenfunctions for $\lambda^1_{n}$ and $\lambda^2_{n}$ lie in the space of $\{\me^{\mi n\theta}\hat{r},\me^{\mi n\theta}\hat{\theta}\}$ for all $n\in\mathbb{Z}$.
	%	\end{theorem}
%	
\begin{theorem}\label{smalleig}
	When $R\rightarrow 0,n\geq 2$, the two eigenvalues of $F_{n}$, given by $\lambda^{i}_{n}, i = 1,2$, satisfy
	\begin{eqnarray}\label{eigexp}
		\begin{aligned}
			\lambda^{1}_{n} &= \frac{-8c_nd_n\mi n(n+1)(n-1)\mu^2(\kp^{n}+\ks^{n})}{\pi\kp^{n}\ks^{n}}R^{2n-2}+\mathcal{O}(R^{2n}),\\
			\lambda^{2}_{n} &= \frac{2c_nd_n\mi\mu^2n^2\kp^{n}\ks^{n}}{\pi}R^{2n}+\mathcal{O}(R^{2n+2}),
		\end{aligned}
	\end{eqnarray}
	where 
	$c_n = \frac{\pi^2\kp^{n}\ks^{n} }{2^{2{n}+1}\mu(\ks^2\mu {n}+\kp^2(\lambda+\mu-\lambda {n}))\Gamma({n}-1)\Gamma({n}+2)},d_n=-\sqrt{\frac{8\pi}{\omega}}\mi$. When $n=0,1$, it holds $\lambda_n^1=\oo(R^2),\lambda_n^2=\oo(R^4)$, and when $n<0$, $\lambda_n^i=\lambda_{-n}^i,i=1,2$. The eigenfunctions for $\lambda^1_{n}$ and $\lambda^2_{n}$ lie in the space of $\{\me^{\mi n\theta}\hat{r},\me^{\mi n\theta}\hat{\theta}\}$ for all $n\in\mathbb{Z}$.
\end{theorem}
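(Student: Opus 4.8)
The plan is to turn the eigenvalue problem for $F_n$ into a $2\times 2$ computation and then read off the leading asymptotics from the small-argument behaviour of the Bessel and Hankel functions appearing in the scattering matrix. Writing the block from Appendix~\ref{appA} as $\mathcal{S}_n=\left(\begin{smallmatrix} s_{11} & s_{12}\\ s_{21} & s_{22}\end{smallmatrix}\right)$ and denoting by $p_n,q_n$ and $P_n,Q_n$ the diagonal entries of $D^{inc}_{n}$ and $D^{scat}_{n}$, I would first record
\[
F_n=\begin{pmatrix} P_n p_n\, s_{11} & P_n q_n\, s_{12}\\ Q_n p_n\, s_{21} & Q_n q_n\, s_{22}\end{pmatrix},\qquad \det F_n=P_nQ_np_nq_n\det\mathcal{S}_n,\qquad \mathrm{tr}\,F_n=P_np_n s_{11}+Q_nq_n s_{22}.
\]
The two eigenvalues $\lambda^1_n,\lambda^2_n$ are the roots of $\lambda^2-(\mathrm{tr}\,F_n)\lambda+\det F_n=0$. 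Since each block governs a single angular harmonic $\me^{\mi n\theta}$ and the far field representation \eqref{farexpan} decomposes into its $\hat r$ and $\hat\theta$ parts, the eigenfunctions of $F_n$ lift to single-mode kernels whose far field patterns lie in $\mathrm{span}\{\me^{\mi n\theta}\hat r,\me^{\mi n\theta}\hat\theta\}$; this disposes of the last claim.

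The core of the argument is the expansion of $s_{11},s_{12},s_{21},s_{22}$ as $R\to0$. I would insert
\[
J_n(z)=\frac{(z/2)^n}{\Gamma(n+1)}\bigl(1+\oo(z^2)\bigr),\qquad H_n^{(1)}(z)=-\frac{\mi\,\Gamma(n)}{\pi}\Bigl(\frac{2}{z}\Bigr)^{n}\bigl(1+\oo(z^2)\bigr)\quad(n\ge1),
\]
together with the derivative recurrences, into the traction-free boundary system on $|x|=R$ that defines $\mathcal{S}_n$, and keep the leading power of $R$ in every entry. The expected conclusion is $\mathrm{tr}\,F_n=\oo(R^{2n-2})$ and $\det F_n=\oo(R^{4n-2})$. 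Since then $(\mathrm{tr}\,F_n)^2=\oo(R^{4n-4})$ dominates $\det F_n$, the square root in the quadratic formula expands as $\mathrm{tr}\,F_n\,\sqrt{1-4\det F_n/(\mathrm{tr}\,F_n)^2}$, and the two roots separate cleanly: $\lambda^1_n\sim\mathrm{tr}\,F_n=\oo(R^{2n-2})$ while $\lambda^2_n\sim\det F_n/\mathrm{tr}\,F_n=\oo(R^{2n})$, in agreement with \eqref{eigexp}. Recovering the precise constants $c_n,d_n$ then amounts to retaining the leading coefficient of each entry of $\mathcal{S}_n$.

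The modes $n=0$ and $n=1$ need separate treatment, since the generic leading coefficient of $\lambda^1_n$ carries the factor $n(n+1)(n-1)$, which vanishes there, and $\Gamma(n-1)$ in $c_n$ is singular at $n=1$; both signal that the top-order term cancels and one must go to the next power of $R$, yielding $\lambda^1_n=\oo(R^2)$ and $\lambda^2_n=\oo(R^4)$. The symmetry $\lambda^i_{-n}=\lambda^i_n$ follows from $J_{-n}=(-1)^nJ_n$ and $H^{(1)}_{-n}=(-1)^nH^{(1)}_n$: the resulting sign factors in $\mathcal{S}_{-n}$ combine with those in $p_{-n},q_{-n},P_{-n},Q_{-n}$ so that $\mathrm{tr}\,F_{-n}=\mathrm{tr}\,F_n$ and $\det F_{-n}=\det F_n$, whence the characteristic quadratics coincide.

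The hardest part will be the trace/determinant bookkeeping: verifying that $\det F_n$ sits exactly two powers of $R$ below $(\mathrm{tr}\,F_n)^2$ (so that the root separation is genuine) and isolating the correct leading constants requires controlling the cancellations among the Bessel and Hankel leading coefficients in $\mathcal{S}_n$, in particular the weak coupling between the compressional and shear channels that renders the off-diagonal entries $s_{12},s_{21}$ of higher order in $R$ than the diagonal ones.
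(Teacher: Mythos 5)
Your overall strategy --- writing $F_n$ as the $2\times2$ product $D^{scat}_n\mathcal{S}_nD^{inc}_n$, reading the two eigenvalues off the characteristic polynomial via $\mathrm{tr}\,F_n$ and $\det F_n$, and feeding in the small-argument Bessel/Hankel asymptotics --- is exactly the route the paper takes (its proof is a one-line appeal to the explicit form of $F_n$ and the expansions \eqref{asybessel}), and your treatment of the eigenfunction claim and of the symmetry $\lambda^i_{-n}=\lambda^i_n$ is fine. The gap is in the order bookkeeping, which you correctly identify as the hard part but then describe by the wrong mechanism. The separation of the two eigenvalues does \emph{not} come from the off-diagonal entries $s_{12},s_{21}$ being of higher order than the diagonal ones: all four entries of $\mathcal{S}_n$ are of the same order $R^{2n-2}$. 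What actually happens is that the leading-order traction matrices are rank one. Writing $d$ and $e$ for the Hankel and Bessel matrices of Appendix \ref{appA}, their leading parts are proportional to $\bigl(\begin{smallmatrix}\kp^{-n}&\mi\ks^{-n}\\-\mi\kp^{-n}&\ks^{-n}\end{smallmatrix}\bigr)$ and $\bigl(\begin{smallmatrix}\kp^{n}&-\mi\ks^{n}\\ \mi\kp^{n}&\ks^{n}\end{smallmatrix}\bigr)$, both singular, so $\det d$ and $\det e$ each lose exactly one order ($\det d\sim R^{-2n-2}$ rather than $R^{-2n-4}$, $\det e\sim R^{2n-2}$ rather than $R^{2n-4}$), while numerators such as $d_{22}e_{11}-d_{12}e_{21}$ do not cancel. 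This cancellation cannot be postponed to the ``recovering the constants'' stage: without it even $\mathrm{tr}\,F_n$ comes out at the wrong order $R^{2n}$ instead of $R^{2n-2}$, and the five-significant-eigenvalues conclusion would already fail.

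Once the cancellation is tracked, your asserted order $\det F_n=\oo(R^{4n-2})$ also does not survive: $\det\mathcal{S}_n=\det e/\det d\sim R^{2n-2}/R^{-2n-2}=R^{4n}$, and since $D^{inc}_n,D^{scat}_n$ are independent of $R$ this places $\det F_n$ at $R^{4n}$ and hence $\lambda^2_n=\det F_n/\lambda^1_n$ at $R^{2n+2}$, two orders below the coefficient at $R^{2n}$ that you are trying to recover. (This does not affect the count of five significant eigenvalues, which only uses $\lambda^1_0,\lambda^1_{\pm1},\lambda^1_{\pm2}\sim R^2$, but it means the route as you describe it cannot produce the stated leading term of $\lambda^2_n$ without first resolving this discrepancy, which your proposal glosses over.) Finally, the $n=0,1$ cases cannot be dispatched by noting that the factor $n(n+1)(n-1)$ vanishes in a formula derived for $n\geq2$: the expansions \eqref{asybessel} themselves degenerate there ($H^{(1)}_0$ is logarithmic and $\Gamma(n-1)$ is singular at $n=1$), so those blocks must be expanded from scratch rather than obtained as a limit of the generic case.
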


Proof of Theorem \ref{smalleig} is simply based on the explicit expression of $F_{n}$ and the asymptotic expansions of Bessel functions~\cite{olverNISTHandbookMathematical2010}
\begin{eqnarray}\label{asybessel}
	\begin{aligned}
		J_n(z) &= 1/(2^n\Gamma(n+1))\left(z^n-z^{n+2}/(4n+4)\right)+\oo(z^{n+4}), \\
		H_n^{(1)}(z) & =  -\mi 2^n\Gamma(n)/{\pi}(z^{-n}+z^{-(n-2)}/(4n-4)) + \oo(z^{-n+4}),
	\end{aligned}
	\mbox{ for } z\rightarrow 0.
\end{eqnarray}
From Theorem \ref{smalleig}, one can see that the ratio  $\lambda^i_{|n|+1}/\lambda^i_{|n|}=\mathcal{O}(R^2)$ for i=1,2 and $|n|\geq 2$, which is similar to the behavior of elastic scattering of rigid spheres~\cite{laiFastInverseElastic2022}. In particular, for a discal cavity with radius $R\rightarrow 0$, there are five significant eigenvalues, namely, $\lambda^1_{-2}$, $\lambda^1_{-1}$, $\lambda^1_{0}$, $\lambda^1_{1}$ and $\lambda^1_{2}$, as their magnitude is on the order of $\mathcal{O}(R^2)$ (others are on the order of $\mathcal{O}(R^4)$ or higher) and the corresponding eigenfunctions dominate the far field scattering. Such a conclusion can be extended to the general-shaped small cavities, as shown in the next subsection. 
%	\subsection{Asymptotic expansion of the far field for small particles}
\subsection{Elastic scattering of multiple small cavities}
In this subsection, based on the single and double layer operators, we show that the effect of multiple scattering can be neglected when the cavities are far from each other, in which case the inverse scattering of multiple cavities is essentially reduced to the reconstruction of a single cavity. Then we derive the asymptotic far field expression of a single small cavity in the case of longitudinal and transversal plane wave incidence. Finally the limit far field operator of several small and well-resolved cavities is presented.

Given $M$ cavities $D_l,1\leq l\leq M$, each is bounded and simply connected in $\mr$ with boundary $\Gamma_l$. In order to simplify the exposition, we shall assume that $k=(\kp,\ks)\sim 1$. Hence, the size of a scatterer $D_l$ can be interpreted in terms of its Euclidean diameter.  We denote $\Phi_k(x,y)$ the fundamental solution \eqref{greenfun} that depends on $k$. Define the single and double layer potential operators, respectively, by
\begin{align}
	(S_l\varphi)(x) & :=2\int_{\Gamma_l}\Phi_k(x,y)\varphi(y)ds_y,\quad\varphi\in
	C(\Gamma_l),\quad x\in\Gamma_l\label{sp},                                                                \\
	(K_l\varphi)(x) & :=2\int_{\Gamma_l}\Big[T_{\nu(y)}\Phi_k(x,y)\Big]^\top\varphi(y)ds_y,\quad\varphi\in
	C(\Gamma_l),\quad x\in\Gamma_l\label{dp}.
\end{align}
The adjoint operator $K_l'$ of $K_l$ is given by
\begin{align}
	(K_l'\varphi)(x):=2\int_{\Gamma_l}\Big[T_{\nu(x)}\Phi_k(x,y)\Big]^\top\varphi(y)ds_y,\quad\varphi\in
	C(\Gamma_l),\quad x\in\Gamma_l\label{asp}.
\end{align}
Recalling that $H_n^{(1)}=J_n+\mi Y_n$, where $J_n$ and $Y_n$ are the Bessel function and Neumann function of order n, respectively, we find from the series expansions of these functions~\cite{coltonInverseAcousticElectromagnetic2019}
that as $k|x-y|\rightarrow 0$,
\begin{align}
	\Phi_k(x,y) = &\Phi_0(x,y)+\frac{\mi}{4\mu}
	\left(C+\frac{2\mi}{\pi}\ln \ks\right) \bi
	+\frac{\mi}{4}\left(C_2\left(\frac{\ln (\kp)}{\lambda+2\mu}-\frac{\ln (\ks)}{\mu}\right)\right.+C_3\left.\left(\frac{1}{\lambda+2\mu}-\frac{1}{\mu}\right)\right)\bi\label{rel}\\&+\mathcal{O}({k^2|x-y|^2\ln(k|x-y|)}),\quad x\neq y.\notag \\
	\frac{1}{k}\sigma(\Phi_k&(x,y)\boldsymbol{e_j})=\frac{1}{k}\sigma(\Phi_0(x,y)\boldsymbol{e_j})+\mathcal{O}({k|x-y|\ln(k|x-y|)}),\quad j=1,2, x\neq y.\label{relder}
\end{align}
where $$\Phi_0(x,y):=-\frac{\lambda+3\mu}{4\mu\pi(\lambda+2\mu)}\ln(|x-y|)\bi
+\frac{\lambda+\mu}{4\pi\mu(\lambda+2\mu)}\frac{(y-x)\otimes(y-x)}{|x-y|^2}$$ denotes the fundamental solution for the Lam\'{e} system and $$C=1+\frac{2\mi}{\pi}(\gamma-\ln 2),C_1=-\frac{2\mi}{\pi},
C_2=\frac{\mi}{\pi},C_3=\frac{1}{2}+\frac{2 \gamma -1-2 \ln (2)}{2 \pi }\mi,$$ are  constants. 

Similar to the definitions of $S_l, K_l, K_l'$, we define the operators $\widetilde{S}_l, \widetilde{K}_l, \widetilde{K}_l'$ in the same way as \eqref{sp},\eqref{dp}
and \eqref{asp}, but with $\Phi_k(x,y)$ replaced by $\Phi_0(x,y)$.

First, we consider the scattering from multiple sparsely distributed cavities. The following theorem show that the effect of multiple scattering can be neglected when they are sufficiently distant.
Set
\begin{align}\label{obs}
	D = \bigcup\limits_{l=1}^M D_l\quad \textrm{and}\quad L=\min_{l
		\neq l',1\leq l,l'\leq M} \textrm{dist}(\overline{D_l},\overline{D_{l'}}).
\end{align}
\begin{theorem}\label{negmul}
	Consider elastic scattering of multiple cavities given in \eqref{obs}, for $L$ sufficiently large, we have
	\begin{align}\label{mulsca}
		\bv_\infty(\hat{x};D)=\sum_{l=1}^{M}\bv_\infty(\hat{x};D_l)+\mathcal{O}\left(\frac{1}{\sqrt{L}}\right).
	\end{align}
\end{theorem}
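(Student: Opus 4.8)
The plan is to reduce the problem to a boundary integral equation on $\Gamma=\bigcup_{l=1}^M\Gamma_l$ and to treat the cross-interactions between distinct cavities as a small perturbation via a Neumann series. First I would recast the exterior traction-free problem \eqref{scatteredfield} as an integral equation $\mathcal{A}\phi=b$ for a boundary density $\phi$ (either the boundary trace of $\bu$ in the double-layer representation \eqref{singleintrep}, or a single-layer density), where $b$ collects the incident data $-T_\nu\uinc$ on $\Gamma$. Writing $\phi=(\phi_1,\dots,\phi_M)$ with $\phi_l$ supported on $\Gamma_l$, the operator $\mathcal{A}$ splits as $\mathcal{A}=\mathcal{A}_d+\mathcal{A}_o$, where the block-diagonal part $\mathcal{A}_d=\mathrm{diag}(\mathcal{A}_{11},\dots,\mathcal{A}_{MM})$ is built from the single-cavity operators $S_l,K_l,K_l'$, and the off-diagonal part $\mathcal{A}_o$ collects the layer operators with kernel $\Phi_k(x,y)$ (and its traction) evaluated at $x\in\Gamma_l$, $y\in\Gamma_{l'}$ with $l\neq l'$.

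Next I would prove the two estimates driving the argument. The diagonal blocks $\mathcal{A}_{ll}$ coincide with the single-cavity integral operators, which are independent of $L$; assuming the exterior traction-free problem is well posed (equivalently that the interior problem admits only the trivial solution, as in Theorem \ref{normal}), each $\mathcal{A}_{ll}$ is boundedly invertible, so $\mathcal{A}_d^{-1}$ is bounded uniformly in $L$. For the coupling, note that for $x\in\Gamma_l$ and $y\in\Gamma_{l'}$ with $l\neq l'$ one has $|x-y|\geq L$. Since $k\sim 1$ and the boundaries are of fixed size, I would invoke the large-argument asymptotics $H_n^{(1)}(z)=\mathcal{O}(z^{-1/2})$ to bound the kernel $\Phi_k(x,y)$ and, crucially, its traction $T_{\nu(y)}\Phi_k(x,y)$: differentiation only rotates the leading oscillatory factor and preserves the $z^{-1/2}$ radiation decay, so every entry of the off-diagonal kernels is $\mathcal{O}(L^{-1/2})$. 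As the boundaries have bounded length, this yields the operator-norm bound $\|\mathcal{A}_o\|=\mathcal{O}(L^{-1/2})$.

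With these in hand I would factor $\mathcal{A}=\mathcal{A}_d(I+\mathcal{A}_d^{-1}\mathcal{A}_o)$. For $L$ large enough $\|\mathcal{A}_d^{-1}\mathcal{A}_o\|<1$, so $\mathcal{A}$ is invertible and the Neumann series gives
\begin{align*}
\phi=\mathcal{A}^{-1}b=\mathcal{A}_d^{-1}b-\mathcal{A}_d^{-1}\mathcal{A}_o\mathcal{A}_d^{-1}b+\cdots=\phi^{(0)}+\mathcal{O}(L^{-1/2}),
\end{align*}
where, because $b$ splits blockwise, $\phi^{(0)}=\mathcal{A}_d^{-1}b=(\phi_1^{(0)},\dots,\phi_M^{(0)})$ is exactly the concatenation of the single-cavity densities solving $\mathcal{A}_{ll}\phi_l^{(0)}=b_l$. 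Finally, the far-field map $\phi\mapsto\bv_\infty$ is bounded with a smooth kernel, and its restriction to $\Gamma_l$ acting on $\phi_l^{(0)}$ reproduces $\bv_\infty(\hat{x};D_l)$. By linearity,
\begin{align*}
\bv_\infty(\hat{x};D)=\sum_{l=1}^M\bv_\infty(\hat{x};D_l)+\mathcal{O}(L^{-1/2}),
\end{align*}
which is \eqref{mulsca}.

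The main obstacle I expect is establishing the coupling bound $\|\mathcal{A}_o\|=\mathcal{O}(L^{-1/2})$ rigorously at the level of operator norms rather than pointwise kernels, together with the uniform-in-$L$ invertibility of $\mathcal{A}_d$. The former requires tracking that the traction operator applied to $\Phi_k$ does not degrade the $z^{-1/2}$ decay inherited from the Hankel functions; the latter hinges on ruling out interior traction-free (Jones) modes for each $D_l$, which is precisely the well-posedness hypothesis available from the setup, and may in practice be secured by a combined single- and double-layer ansatz so that invertibility holds for all frequencies.
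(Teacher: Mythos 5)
Your proposal is correct and follows essentially the same route as the paper: a single-layer (or combined-layer) ansatz on each $\Gamma_l$, a block system whose diagonal blocks are the single-cavity operators $I-K_l'$ and whose off-diagonal coupling operators are bounded by $\mathcal{O}(L^{-1/2})$ via the decay of the fundamental solution and its traction, followed by a Neumann-series inversion and the boundedness of the far-field map. Even your remedy for interior traction-free eigenvalues (switching to a combined single- and double-layer ansatz) matches the remark at the end of the paper's proof.
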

\begin{proof}
	For simplicity we first assume that $M = 2$ and the pair $(\kp,\ks)$ is not an eigenvalue of the elastic scattering problem in $D_l$ associated with the homogeneous traction-free boundary condition on $\Gamma_l,l=1,2$.
	
	The scattered field $\bv(x;D_l)$ corresponding to $D_l$ can be represented as the single layer potential
	\[\bv(\hat{x};D_l)=\int_{\Gamma_l}\Phi(x,y)\phi_l(y)d s_y,\quad x\in\mr\setminus\overline{D_l},\]
	where the density function $\phi_l\in (C(\Gamma_l))^2$ is uniquely determined from the traction-free boundary condition on $\Gamma_l$, and is implied in the boundary integral equation
	\[\phi_l=2(I-K_l')^{-1}(T_\nu u^{in}|_{\Gamma_l}),\quad l=1,2.\]
	Note that here $I$ denotes the identity operator. The uniqueness and existence of $\phi_l$ follow from the Fredholm alternative applied to the operator $I-K_l'$. To prove the theorem for the scatterer $
	D=D_1\cup D_2$, we make use of the ansatz
	\[\bv(x;D)=\sum_{l=1,2}\left\{\int_{\Gamma_l}\Phi(x,y)\varphi_l(y)d s_y\right\},\quad x\in\mr\setminus\overline{D}\]
	with $\varphi_l\in C(\Gamma_l)$. By using the boundary condition
	$T_\nu(\uinc+\bv)=0$ on each $\Gamma_l$, we obtain the system of integral equations
	\begin{align}\label{ineqs}
		\begin{pmatrix}
			I-K_1' & J_2    \\
			J_1    & I-K_2'
		\end{pmatrix}
		\begin{pmatrix}
			\varphi_1 \\
			\varphi_2
		\end{pmatrix}=2\begin{pmatrix}
			T_{\nu}\uinc|_{\Gamma_1} \\T_{\nu}\uinc|_{\Gamma_2}
		\end{pmatrix},
	\end{align}
	where  the operators $J_1:C(\Gamma_1)\rightarrow C(\Gamma_2),
	J_2:C(\Gamma_2)\rightarrow C(\Gamma_1)$ are defined respectively by
	\[(J_1\varphi_1)(x):=-2\int_{\Gamma_1}[T_{\nu(x)}\Phi(x,y)]\varphi_1(y)d s_y,\quad x\in\Gamma_2,\]
	\[(J_2\varphi_2)(x):=-2\int_{\Gamma_2}[T_{\nu(x)}\Phi(x,y)]\varphi_2(y)d s_y,\quad x\in\Gamma_1.\]
	Since $L\gg 1$, using the asymptotic behavior of the gradient of the fundamental solution \eqref{greenfun}, one readily estimates
	\[||J_1\varphi_1||_{C(\Gamma_1)}\leq C_1L^{-\frac{1}{2}}||\varphi_1||_{C(\Gamma_1)},||J_2\varphi_2||_{C(\Gamma_2)}\leq C_2L^{-\frac{1}{2}}||\varphi_2||_{C(\Gamma_2)},\quad C_1,C_2>0.\]
	Hence, it follows from \eqref{ineqs} and the invertibility of $I-K_l':C(\Gamma_l)\rightarrow C(\Gamma_l)$ that
	\begin{align}
		\begin{pmatrix}
			\varphi_1 \\\varphi_2
		\end{pmatrix}=		\begin{pmatrix}
			(I-K_1')^{-1} & 0             \\
			0             & (I-K_2')^{-1}
		\end{pmatrix}
		\begin{pmatrix}
			2T_{\nu}\uinc|_{\Gamma_1} \\2T_{\nu}\uinc|_{\Gamma_2}
		\end{pmatrix}+\oo\left(\frac{1}{\sqrt{L}}\right)=\begin{pmatrix}
			\phi_1 \\\phi_2
		\end{pmatrix}+\oo\left(\frac{1}{\sqrt{L}}\right).
	\end{align}
	This implies that
	\[\bv(x;D)=\bv(x;D_1)+\bv(x;D_2)+\oo\left(\frac{1}{\sqrt{L}}\right),\]
	which further leads to
	\[\bv_{\infty}(x;D)=\bv_{\infty}(x;D_1)+\bv_{\infty}(x;D_2)+\oo\left(\frac{1}{\sqrt{L}}\right).\]
	The case that $D$ has more than two components can be proved in a similar manner by making use of the integral equation method. In the argument above, there is a technical assumption that $(\kp,\ks)$ is not an eigenvalue for the elastic scattering problem in $D_l$ with the traction-free boundary condition. If the eigenvalue problem happens, one can make use of the combined layer potentials~\cite{hsiaoBoundaryIntegralEquations2008} and then by a completely similar argument as above, one can show \eqref{mulsca}.
	Hence the proof is completed.
\end{proof}

Next, we study elastic scattering from multiple small cavities based on the above “independent scattering”. We assume that $D_l,1\leq l\leq M$ contains the origin and its diameter is comparable with the wavelength, i.e., $\textrm{diam}(D_l)\sim\mathcal{O}(1)$. For $\rho\in \mathbb{R}_+$, we introduce a dilation operator $\Lambda_\rho$ by
\[\Lambda_\rho D_l:=\{\rho x|x\in D_l\}\]
and set
\[D_l^\rho:=s_l+\Lambda_\rho D_l,\quad s_l\in\mr,1\leq l\leq M.\]
Let
\begin{align}\label{mulobs}
	D^\rho:=\bigcup\limits_{l=1}^MD_l^\rho.
\end{align}
We have the following theorem which gives the asymptotic far field pattern similar to the three-dimensional case in \cite{huRecoveringComplexElastic2014}.
\begin{theorem}\label{asypd}
	Consider multiple elastic cavities $D^\rho$ given in \eqref{mulobs}. Assume that
	$\rho\ll 1,k\sim 1$ and
	\begin{align}
		L=\min_{l\neq l',1\leq l,l'\leq M} {\rm dist} (s_l,s_{l'})\gg 1.
	\end{align}
	Then the asymptotic compressional and shear far field pattern excited by longitudinal plane wave $\boldsymbol{u^i_{\fp}}=d\me^{\mi k x\cdot d}$ and transversal plane wave $\boldsymbol{u^i_{\fs}}=d^\bot\me^{\mi k x\cdot d}$, denoted by $\bv_{\fp\fp}^\infty,\bv_{\fs\fp}^\infty,\bv_{\fp\fs}^\infty,\bv_{\fp\fs}^\infty$, are given as follows,
	\begin{align*}
		\bv_{\fp\fp}^\infty(\hat{x},D^\rho)=&-\rho^2\gamma\kp^{\frac{3}{2}}\bigg((\hat{x}\otimes\hat{x})\sum_{l=1}^M\me^{\mi s_l\cdot(\kp d-\kp\hat{x})}
		\left[\omega^2d|D_l|+2\omega^2(\hat{x}\cdot\mathbb{P}_l)\cdot\mathbb{L}(\lambda,\mu,d)\right]\\
		&+\oo(\rho\ln \rho+L^{-\frac{1}{2}})\bigg),\\
		\bv_{\fs\fp}^\infty(\hat{x},D^\rho)=&-\rho^2\gamma\ks^{\frac{3}{2}}\bigg((\bi-(\hat{x}\otimes\hat{x}))\sum_{l=1}^M\me^{\mi s_l\cdot(\kp d-\ks\hat{x})}
		\big[\omega^2d|D_l|+2\kp\ks(\lambda+2\mu)(\hat{x}\cdot\mathbb{P}_l)\cdot\mathbb{L}(\lambda,\mu,d)\big]\\&+\oo(\rho\ln \rho+L^{-\frac{1}{2}})\bigg), \\
		\bv_{\fp\fs}^\infty(\hat{x},D^\rho)=&-\rho^2\gamma\kp^{\frac{3}{2}}\bigg((\hat{x}\otimes\hat{x})\sum_{l=1}^M\me^{\mi s_l\cdot(\ks d-\kp\hat{x})}
		\left[\omega^2d^\perp|D_l|+2\kp\ks\mu(\hat{x}\cdot\mathbb{P}_l)\cdot\mathbb{H}(d)\right]\\&+\oo(\rho\ln \rho+L^{-\frac{1}{2}})\bigg),\\
		\bv_{\fs\fs}^\infty(\hat{x},D^\rho)
		=&-\rho^2\gamma\ks^{\frac{3}{2}}\bigg((\bi-(\hat{x}\otimes\hat{x}))\me^{\mi s_l\cdot(\ks d-\ks\hat{x})}
		\left[\omega^2d^\perp|D_l|+2\omega^2(\hat{x}\cdot\mathbb{P}_l)\cdot\mathbb{H}(d)\right]\\&+\oo(\rho\ln \rho+L^{-\frac{1}{2}})\bigg).
	\end{align*}
	Here, $\gamma= \frac{\me^{\mi\pi/4}}{\sqrt{8k\pi }}$, $|D_l|$ denotes the surface area of $D_l$ and $\mathbb{P}_l$ denotes the polarization tensor corresponding to the cavity $D_l$ as introduced in \eqref{tens}. $\mathbb{L}$ and $\mathbb{H}$ are defined by
	\eqref{del}, \eqref{deh} respectively.
\end{theorem}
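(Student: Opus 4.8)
The plan is to localize the analysis to one cavity and then reduce the small–scatterer problem to a static Lam\'e exterior problem whose response is encoded by the polarization tensor. First I would invoke Theorem \ref{negmul}: since $L\gg1$, multiple scattering is negligible and the far field of $D^\rho$ is the sum of the far fields of the individual $D_l^\rho=s_l+\Lambda_\rho D_l$ up to $\oo(L^{-1/2})$. It therefore suffices to compute the compressional and shear far field patterns generated by a single cavity under a longitudinal wave $\uinc=d\me^{\mi\kp x\cdot d}$ or a transversal wave $\uinc=d^\bot\me^{\mi\ks x\cdot d}$, and then superpose over $l$.

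For a single cavity I would represent $\bv$ by a single layer potential with density $\phi_l^\rho=2(I-K_l')^{-1}(T_\nu\uinc|_{\Gamma_l^\rho})$ (with $K_l'$ posed on the small boundary $\Gamma_l^\rho$), and rescale to the reference boundary $\Gamma_l$ via $y=s_l+\rho\eta$. The static double layer kernel built from $\Phi_0$ is scale invariant — its traction is homogeneous of degree $-1$, which is compensated by $ds_y\sim\rho$ — so using the expansions \eqref{rel} and \eqref{relder} one shows that, after rescaling, $(I-K_l')^{-1}$ on $\Gamma_l^\rho$ converges to the static inverse $(I-\widetilde K_l')^{-1}$ with error $\oo(\rho\ln\rho)$; the logarithm is the genuine two–dimensional feature coming from the $\ln(k|x-y|)$ terms in \eqref{rel}. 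Hence the leading density solves the exterior Lam\'e problem with constant traction data read off from $\nabla\uinc(s_l)$ — for the longitudinal wave $\nabla\uinc(s_l)=\mi\kp(d\otimes d)\me^{\mi\kp s_l\cdot d}$, while the constant part of $\uinc$ carries zero traction. The dipole moment of this static response is exactly the polarization tensor $\mathbb{P}_l$ of \eqref{tens} contracted against the applied (symmetrized) gradient, which is where $\mathbb{P}_l$ and the data tensors $\mathbb{L}(\lambda,\mu,d)$, $\mathbb{H}(d)$ of \eqref{del}, \eqref{deh} enter.

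Next I would insert $\bu=\uinc+\bv$ into the far field representation following from \eqref{singleintrep}, namely
\[
\bv_{\fp,\infty}(\hx)\hx=\gamma\,\frac{\kp^2}{\omega^2}\int_{\Gamma_l^\rho}\big[T_{\nu(y)}(\hx\otimes\hx)\me^{-\mi\kp\hx\cdot y}\big]^\top\bu(y)\,ds_y,
\]
and its shear analogue with $\bi-\hx\otimes\hx$. Expanding both the kernel $\me^{-\mi\kp\hx\cdot y}$ and $\bu$ about $s_l$, the constant–constant term vanishes because $\int_{\Gamma_l^\rho}\nu\,ds=\int_{\Gamma_l^\rho}(\hx\cdot\nu)\,ds=0$, so the leading contribution is of order $\rho^2$ and splits into two pieces: the phase variation of the kernel paired with the constant field, which by the divergence theorem gives the monopole term proportional to $|D_l^\rho|=\rho^2|D_l|$ and, after using $\kp^2(\lambda+2\mu)=\omega^2$, assembles into $\omega^2 d\,|D_l|$; and the linear part of $\bu$, i.e.\ the static dipole response, giving the polarization term $2\omega^2(\hx\cdot\mathbb{P}_l)\cdot\mathbb{L}(\lambda,\mu,d)$. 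Collecting the incident phase $\me^{\mi\kp s_l\cdot d}$ and the kernel phase $\me^{-\mi\kp\hx\cdot s_l}$ into $\me^{\mi s_l\cdot(\kp d-\kp\hx)}$ yields $\bv_{\fp\fp}^\infty$; repeating with the shear projection and/or shear incidence (polarization $d^\bot$, wavenumber $\ks$, tensor $\mathbb{H}$) produces the remaining three formulas, with phases $\me^{\mi s_l\cdot(\kp d-\ks\hx)}$, etc., and prefactors $\gamma\kp^{3/2}$ or $\gamma\ks^{3/2}$ according to the emitted wave. Summing over $l$ and combining the $\oo(\rho\ln\rho)$ static–approximation error with the $\oo(L^{-1/2})$ multiple–scattering error reproduces the stated remainder $\oo(\rho\ln\rho+L^{-1/2})$.

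The \emph{main obstacle} is the rescaled integral–operator analysis: proving that $(I-K_l')^{-1}$ on $\Gamma_l^\rho$ converges to $(I-\widetilde K_l')^{-1}$ with a quantitative $\oo(\rho\ln\rho)$ bound, uniformly away from interior traction eigenvalues, and then cleanly separating the two surviving $\rho^2$ contributions — showing that the $O(1)$ and $O(\rho)$ boundary integrals cancel and that the monopole and dipole pieces recombine into exactly $\omega^2 d|D_l|$ and $(\hx\cdot\mathbb{P}_l)\cdot\mathbb{L}$ with the correct constants. The delicate bookkeeping is tracking which gradient (full versus symmetrized) drives $\mathbb{P}_l$ and matching the constants $2\omega^2$, $2\kp\ks(\lambda+2\mu)$ and $2\kp\ks\mu$ across the four incidence/emission combinations.
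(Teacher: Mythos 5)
Your overall strategy is the paper's: reduce to a single cavity via Theorem \ref{negmul}, represent the scattered field by a layer potential, rescale the boundary integral operators to their static (Lam\'e) limits, and extract a monopole term proportional to $|D_l|$ plus a dipole term carried by the polarization tensor. However, as written your argument has an internal mismatch that leaves a genuine gap. In your second paragraph you set up the \emph{single-layer} density $\phi_l^\rho=2(I-K_l')^{-1}(T_\nu\uinc)$ and study its static limit, but in your third paragraph you compute the far field from the \emph{double-layer} representation \eqref{singleintrep}, which requires the boundary trace of the total field $\bu$, not the single-layer density. The asymptotic expansion of $\bu$ on $\Gamma_l^\rho$ (constant part plus the linear corrector that would produce the dipole) is never established, and the polarization tensor \eqref{tens} is defined precisely through the static single-layer density $(I-\widetilde K_\Sigma')^{-1}\nu$, so identifying your dipole moment with exactly that object would require an additional identity linking the two representations. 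The paper avoids this by staying with the single-layer route throughout: the far field is then a direct plane-wave-weighted integral of $(I-K_\Omega')^{-1}\psi$, the $O(1)$ part of $\int_{\partial\Sigma}(I-\widetilde K_\Sigma')^{-1}\hat\psi$ is killed by the duality trick $(I-\widetilde K_\Sigma)^{-1}1=\tfrac12$ together with Gauss's theorem (since $\mathrm{div}\,\mathbb{L}=0$ for constant $\mathbb{L}$), and \eqref{tens} appears verbatim in the second moment.

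A second, quantitative point: you claim $(I-K_l')^{-1}\to(I-\widetilde K_l')^{-1}$ with error $\oo(\rho\ln\rho)$. That is one power of $\rho$ too weak to close the argument: the leading far-field contribution is itself $O(\rho^2)$ (prefactor $\rho$ times the $O(\rho)$ surviving integral), so an $\oo(\rho\ln\rho)$ operator error would pollute it at the same order. The correct bound is $\oo(\rho^2\ln\rho)$, which the paper obtains because the kernel difference $\sigma(\Phi_k)-\sigma(\Phi_0)=\oo(\rho\ln\rho)$ on $\partial\Omega$ is integrated against the rescaled surface measure $ds_y\sim\rho\,ds_\eta$, contributing the extra factor of $\rho$; this yields the stated relative remainder $\oo(\rho\ln\rho)$ \emph{after} factoring out $\rho^2$. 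With the single-layer route restored and this sharper estimate in hand, the rest of your bookkeeping (cancellation of the constant--constant term, assembly of $\omega^2 d|D_l|$ via $\kp^2(\lambda+2\mu)=\omega^2$, and the four phase/prefactor combinations) matches the paper.
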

\begin{proof}
	By Theorem \ref{negmul}, it suffices to analyze the asymptotics of the far-field patterns for only one single cavity. For notational convenience, we let $\Omega=D_l^\rho$ and $\Sigma=D_l$ for any fixed $1\leq l\leq M$, so that $\Omega = z+\Lambda_\rho\Sigma$. For
	$f\in C(\partial \Omega)$ and $g\in C(\partial {\Sigma})$, we introduce the transforms
	\begin{align*}
		\hat{f}(\xi)=f^\wedge:=f(\rho\xi+z),\xi\in\partial {\Sigma},\quad
		\check{g}(x)=g^\vee:=g((x-z)/\rho),x\in\partial \Omega.
	\end{align*}
	Using change of variables it is not difficult to verify that~\cite{griesmaierMultifrequencyOrthogonalitySampling2011}
	\begin{align*}
		K_\Omega\phi=(K_{\Sigma}\hat{\phi})^\vee,\quad(I-K_\Omega)\phi=((I-K_{\Sigma})\hat{\phi})^\vee,\quad(I-K_\Omega)^{-1}\phi=((I-K_{\Sigma})^{-1}\hat{\phi})^\vee,
	\end{align*}
	and	similarly
	\begin{align*}
		K'_\Omega\phi=(K'_{\Sigma}\hat{\phi})^\vee,\quad(I-K'_\Omega)\phi=((I-K'_{\Sigma})\hat{\phi})^\vee,\quad(I-K'_\Omega)^{-1}\phi=((I-K'_{\Sigma})^{-1}\hat{\phi})^\vee.
	\end{align*}
	These identities also hold for $\widetilde{K}, \widetilde{K}'$ defined before via
	$\widetilde{\Phi}(x,y)=\Phi_0(x,y)$. Hence using \eqref{relder}, there holds
	\begin{align*}
		(I-K'_\Omega)\phi-((I-\widetilde{K}'_{\Sigma})\hat{\phi})^\vee
		& =(I-K'_\Omega)\phi-(I-\widetilde{K}'_\Omega)\phi
		\\
		& =(\widetilde{K}'_\Omega-K'_\Omega)\phi
		\\
		& =2\int_{\partial\Omega}\nu(x)\cdot[\sigma(\Phi_k(x,y)-\Phi_0(x,y))]\phi(y)d s_y
		\\
		& =2\int_{\partial {\Sigma}}\nu(\xi)\cdot(\oo(\rho\ln\rho))\hat{\phi}(\eta)\rho d s_\eta\sim \mathcal{O}(\rho^2\ln \rho)
	\end{align*}
	as $\rho\rightarrow 0$. Since $\rho\ll 1$, by Neumann series we have
	\begin{align}\label{asyest}
		(I-K'_\Omega)^{-1}\phi=((I-\widetilde{K}'_{\Sigma})^{-1}\hat{\phi})^\vee+\oo(\rho^2\ln \rho),\quad\rho\rightarrow 0.
	\end{align}
	To proceed with the proof, we represent the scattered field $\bv(x;\Omega)$ as the single layer potential
	\begin{align*}
		\bv(x;\Omega)=\int_{\partial\Omega}\Phi(x,y)\varphi(y)d s_y,
		\quad x\in\mr\setminus\overline{\Omega},
	\end{align*}
	with the density function $\varphi\in (C(\partial \Omega))^2$ given by
	\begin{align*}
		\varphi=2(I-K'_\Omega)^{-1}(T_\nu \uinc|_{\partial\Omega}).
	\end{align*}
	Then, using the asymptotic behavior of the fundamental solution \eqref{greenfun}, the compressional part $\bv_{\fp,\infty}$ and shear part $\bv_{\fs,\infty}$ of the far field $\bv_\infty$ are given by
	\begin{align}\label{far}
		\bv_{\fp,\infty}(\hat{x};\Omega) & =2\gamma\kp^{\frac{3}{2}}(\hat{x}\otimes\hat{x})\int_{\partial\Omega}\me^{-\mi \kp\hat{x}\cdot y}
		[(I-K'_\Omega)^{-1}\psi](y)d s_y,\\
		\bv_{\fs,\infty}(\hat{x};\Omega) & =2\gamma\ks^{\frac{3}{2}}(\bi-\hat{x}\otimes\hat{x})\int_{\partial\Omega}\me^{-\mi \ks\hat{x}\cdot y}
		[(I-K'_\Omega)^{-1}\psi](y)d s_y,
	\end{align}
	where $\gamma = \frac{\me^{\mi\pi/4}}{\sqrt{8\pi}{\omega^2}}, \psi:=T_\nu \uinc|_{\partial\Omega}=\nu\cdot\sigma(\uinc)|_{\partial\Omega}$.
	Changing the variable $y=z+\rho \xi$ with $\xi\in\partial {\Sigma}$ in \eqref{far} and
	making use of the estimate \eqref{asyest}, we find
	\begin{align}\label{asyfar}
		\bv_{\fp,\infty}(\hat{x};\Omega) & =2\rho\gamma\kp^{\frac{3}{2}}(\hat{x}\otimes\hat{x})\int_{\partial {\Sigma}}\me^{-\mi \kp\hat{x}\cdot (z+\rho\xi)}
		[(I-\widetilde{K}_{\Sigma}')^{-1}\hat{\psi}(\xi)+\oo(\rho^2\ln \rho)]d s_\xi.
	\end{align}
	Expanding the exponential function $\xi\rightarrow\me^{-\mi \kp\hat{x}\cdot(z+\rho\xi)}$ around $z$ in terms of $\rho$ yields
	\begin{align}\label{incexp}
		\me^{-\mi \kp\hat{x}\cdot(z+\rho\xi)}=\me^{-\mi \kp\hat{x}\cdot z}-\mi \kp\rho(\hat{x}\cdot\xi)\me^{-\mi \kp\hat{x}\cdot z}+\oo(\rho^2),\quad \rho\rightarrow0.
	\end{align}
	Inserting \eqref{incexp} into \eqref{asyfar} gives
	\begin{align}\label{asyfar2}
		\begin{aligned}
			\bv_{\fp,\infty}(\hat{x};\Omega)=
			& 2\rho\gamma\kp^{\frac{3}{2}}\me^{-\mi \kp\hat{x}\cdot z}(\hat{x}\otimes\hat{x})\left(\int_{\partial {\Sigma}}
			(I-\widetilde{K}'_{\Sigma})^{-1}\hat{\psi}(\xi)d s_\xi\right)
			\\
			& -2\mi\rho^2\gamma \kp^{\frac{5}{2}}\me^{-\mi\kp\hat{x}\cdot z}(\hat{x}\otimes\hat{x})\left(\int_{\partial {\Sigma}}(\hat{x}\cdot\xi)
			(I-\widetilde{K}'_{\Sigma})^{-1}\hat{\psi}(\xi)d s_\xi\right)+\oo(\rho^3\ln \rho).
		\end{aligned}
	\end{align}
	To estimate the integrals on the right hand side of \eqref{asyfar2}, we will investigate the  longitudinal and transversal incident plane waves, respectively.\\
	\textbf{Case \Rmnum{1}}:\quad$\uinc=\boldsymbol{u^i_{\fp}}=d\me^{\mi\kp x\cdot d}$
	%		
	%		investigate the asymptotics for plane wave $\uinc=\me^{\mi kx\cdot d}$. Expanding the function
	%		$\xi\rightarrow(\nabla u^{i})^\wedge(\xi)=\nabla \uinc(\rho\xi+z)$
	%		\begin{align}\label{normexp}
		%			(\nabla u^{i})^\wedge(\xi)=\mi kd\me^{\mi k z\cdot d}
		%			[ 1+\mi k(d \cdot\xi)\rho+\oo(\rho^2) ],\quad\rho\rightarrow 0
		%		\end{align}
	
	A straightforward insertion of $\boldsymbol{u^i_{\fp}}$ into \eqref{sigmau} shows
	\begin{align}\label{sigmap}
		(\sigma(\boldsymbol{u^i_{\fp}})^\wedge)(\xi)=\mi(\lambda+2\mu)\kp\me^{\mi\kp z\cdot d}
		\mathbb{L}(\lambda,\mu,d)\left[1+\mi\kp\rho(d\cdot\xi)+\oo(\kp^2\rho^2)\right],
	\end{align}
	where 
	\begin{align}\label{del}
		\mathbb{L}(\lambda,\mu,d):=\frac{(\lambda I+2\mu(d\otimes d))}{\lambda+2\mu}.
	\end{align}
	Recalling that~\cite{kressLinearIntegralEquations2014}
	\begin{align*}
		\widetilde{K}_{\Sigma}1=2\int_{\partial {\Sigma}}\frac{\partial \Phi_0(x,y)}{\partial\nu(y)}d s_y=-1, \quad x\in \partial \Sigma,
	\end{align*}
	we see $(I-\widetilde{K}_{\Sigma})^{-1}1=\frac{1}{2}$, and thus
	\begin{align}\label{firstterm}
		\begin{aligned}
			\int_{\partial {\Sigma}}(I-\widetilde{K}'_{\Sigma})^{-1}\hat{\psi}(\xi)d s_\xi=\int_{\partial {\Sigma}}\hat{\psi}(\xi)(I-\widetilde{K}_{\Sigma})^{-1}1d s_\xi
			=\frac{1}{2}\int_{\partial {\Sigma}}\nu(\xi)\cdot(\sigma(\boldsymbol{u^i_{\fp}})^\wedge)(\xi)d s_\xi.
		\end{aligned}
	\end{align}
	Inserting \eqref{sigmap} into \eqref{firstterm} and applying Gauss’s theorem yield
	\begin{align}\label{term3}
		\begin{aligned}
			\int_{\partial {\Sigma}}(I-\widetilde{K}'_{\Sigma})^{-1}\hat{\psi}(\xi)d s_\xi
			=&\frac{\mi(\lambda+2\mu)\kp\me^{\mi\kp z\cdot d}}{2}
			\left\{\int_{\Sigma}\mathrm{div}_\xi[\mathbb{L}(\lambda,\mu,d)(1+\mi\kp\rho d\cdot \xi)]d s_\xi\right\}+\oo(\rho^2)\\
			=&-\frac{\rho\omega^2 d\me^{\mi\kp z\cdot d}|{\Sigma}|}{2}+\oo(\rho^2).
		\end{aligned}
	\end{align}
	Note that $|{\Sigma}|$ denotes the surface area of ${\Sigma}$. Again using \eqref{sigmap} we can evaluate the second integral over $\partial {\Sigma}$ 
	on the right hand of \eqref{asyfar2} as follows
	\begin{align}\label{term4}
		\begin{aligned}
			\int_{\partial {\Sigma}}(\hat{x}\cdot\xi)(I-\widetilde{K}'_{\Sigma})^{-1}\hat{\psi}(\xi)d s_\xi
			=-\mi(\lambda+2\mu)\kp\me^{\mi\kp z\cdot d}(\hat{x}\cdot\mathbb{P})\cdot\mathbb{L}(\lambda,\mu,d)+\oo(\rho),
		\end{aligned}
	\end{align}
	where the polarization tensor $\mathbb{P}$ depending only on $\Sigma$ is defined as
	\begin{align}\label{tens}
		\mathbb{P}=-\int_{\partial {\Sigma}}\xi\otimes(I-\widetilde{K}'_{\Sigma})^{-1}\nu(\xi)
		d s_\xi.
	\end{align}
	Now, combining \eqref{asyfar2},\eqref{term3} and \eqref{term4} gives the asymptotics
	\begin{align}\label{farpp}
		\begin{aligned}
			\bv_{\fp\fp}^\infty(\hat{x},\Omega)
			=&-\rho^2\kp^{\frac{3}{2}}\gamma(\hat{x}\otimes\hat{x})\me^{\mi z\cdot(\kp d-\kp\hat{x})}
			\left[\omega^2d|{\Sigma}|+2\kp^2(\lambda+2\mu)(\hat{x}\cdot\mathbb{P})\cdot\mathbb{L}(\lambda,\mu,d)\right]\\&+\oo(\rho^3\ln \rho).
		\end{aligned}
	\end{align}
	Similarly,
	\begin{align}\label{farsp}
		\begin{aligned}
			\bv_{\fs\fp}^\infty(\hat{x},\Omega)
			=&-\rho^2\ks^{\frac{3}{2}}\gamma(I-(\hat{x}\otimes\hat{x}))\me^{iz\cdot(\kp d-\ks\hat{x})}
			\big[\omega^2d|\Sigma|+2\kp\ks(\lambda+2\mu)(\hat{x}\cdot\mathbb{P})\cdot\mathbb{L}(\lambda,\mu,d)\big]\\&+\oo(\rho^3\ln \rho).
		\end{aligned}
	\end{align}
	\noindent	\textbf{Case \Rmnum{2}}:\quad$\uinc=\boldsymbol{u^i_{\fs}}=d^\bot\me^{\mi\ks x\cdot d}$
	
	Similarly, a straightforward insertion of $\boldsymbol{u^i_{\fs}}$ into \eqref{sigmau} shows that 
	\begin{align}\label{tractionexp}
		(\sigma(\boldsymbol{u^i_{\fs}})^\wedge)(\xi)=\mi\mu\ks\me^{\mi\ks z\cdot d}\mathbb{H}(d)
		[1+\mi\ks\rho(d\cdot\xi)+\oo(\ks^2\rho^2)],
	\end{align}
	where 
	\begin{align}\label{deh}
		\mathbb{H}:=(d^\perp\otimes d)+(d^\perp\otimes d)^\top.
	\end{align} 
	As a consequence of
	\eqref{firstterm}, we have $\psi=\nu\cdot\sigma(\boldsymbol{u^i_{\fs}})|_{\partial\Omega}$.
	Thus,
	\begin{align}\label{term1}
		\begin{aligned}
			\int_{\partial {\Sigma}}(I-\widetilde{K}'_{\Sigma})^{-1}\hat{\psi}(\xi)d s_\xi
			=&\frac{\mi\mu\ks\me^{\mi\ks z\cdot d}}{2}
			\left(\int_{\Sigma}\mathrm{div}_\xi(\mathbb{H}(d)(1+\mi\ks\rho d\cdot \xi))d s_\xi\right)+\oo(\rho^2)\\
			=&-\frac{\rho\omega^2 d^\perp\me^{\mi\ks z\cdot d}|{\Sigma}|}{2}+\oo(\rho^2).
		\end{aligned}
	\end{align}
	Similar to \eqref{term4}, one has
	\begin{align}\label{term2}
		\begin{aligned}
			\int_{\partial {\Sigma}}(\hat{x}\cdot\xi)(I-\widetilde{K}'_{\Sigma})^{-1}\hat{\psi}(\xi)d s_\xi
			=&\mi\mu\ks\me^{\mi\ks z\cdot d}\left(\int_{\partial {\Sigma}}(\hat{x}\cdot\xi)
			(I-\widetilde{K}'_{\Sigma})^{-1}(\nu(\xi)\cdot\mathbb{H}(d))d s_\xi\right)+\oo(\rho)\\
			=&-\mi\mu\ks\me^{\mi\ks z\cdot d}(\hat{x}\cdot\mathbb{P})\cdot\mathbb{H}(d)+\oo(\rho),
		\end{aligned}
	\end{align}
	where the polarization tensor $\mathbb{P}$ is given as the same in \eqref{tens}.
	Therefore, the insertion of \eqref{term1} and\eqref{term2} into \eqref{asyfar2}
	yields
	\begin{align}\label{farps}
		\begin{aligned}
			\bv_{\fp\fs}^\infty(\hat{x},\Omega)=&2\rho\gamma\kp^{\frac{3}{2}}(\hat{x}\otimes \hat{x})\me^{-\mi\kp\hat{x}\cdot z}\left(\int_{\partial\Omega}
			(I-\widetilde{K}'_{\Sigma})^{-1}\hat{\varphi}(\xi)d s_\xi\right)\\
			&-2i\rho^2\gamma\kp^{\frac{5}{2}}(\hat{x}\otimes \hat{x})\me^{-\mi\kp\hat{x}\cdot z}\left(\int_{\partial\Omega}(\hat{x}\cdot\xi)
			(I-\widetilde{K}'_{\Sigma})^{-1}\hat{\varphi}(\xi)d s_\xi\right)
			+\oo(\rho^3\ln \rho)\\
			=&-\rho^2\kp^{\frac{3}{2}}\gamma(\hat{x}\otimes\hat{x})\me^{\mi z\cdot(\ks d-\kp\hat{x})}
			\left[\omega^2d^\perp|{\Sigma}|+2\kp\ks\mu(\hat{x}\cdot\mathbb{P})\cdot\mathbb{H}(d)\right]+\oo(\rho^3\ln \rho).
		\end{aligned}
	\end{align}
	Similarly,
	\begin{align}\label{farss}
		\begin{aligned}
			\bv_{\fs\fs}^\infty(\hat{x},\Omega)=&-\rho^2\ks^{\frac{3}{2}}\gamma(\bi-(\hat{x}\otimes\hat{x}))\me^{\mi z\cdot(\ks d-\ks\hat{x})}
			\left[\omega^2d^\perp|\Sigma|+2\ks^2\mu(\hat{x}\cdot\mathbb{P})\cdot\mathbb{H}(d)\right]+\oo(\rho^3\ln \rho).
		\end{aligned}
	\end{align}
	Combining \eqref{farpp},\eqref{farsp},\eqref{farps},\eqref{farss} and Theorem \ref{negmul}, the proof is completed.
\end{proof}

\subsection{Selective focusing using DORT method}\label{selsec}
Throughout this section, we focus on the regime $\rho\ll k^{-1}\ll L$. Based on the asymptotic time harmonic far field model, we first derive the explicit expression of the limit far field operator $F^0$ which approximates $F^\rho$. Then compute the approximate eigenvalues and eigenvectors of $F^0$. In the end, we prove that these eigenfunctions selectively focus on the cavities based on the decaying property of oscillatory integrals.

According to Theorem \ref{asypd}, neglecting the remainder terms and by linearity, the limit operator $F^0:\ml\rightarrow\ml$, which can approximate the far field operator $F^\rho$, is defined by
%We denote by $F^0:\ml\rightarrow \ml$ the limit far field operator explicitly given by
\begin{align}\label{limoper}
	F^0f(\hat{x})=\sum_{l=1}^{M}(g_{l,\fp},g_{l,\fs}),
\end{align}
where
\begin{align}
	g_{l,\fp}(\hat{x})= &\kp^{\frac{3}{2}}\me^{-\mi\kp\hat{x}\cdot s_l}\hat{x}\cdot\bigg[\omega^2\int_\ms\me^{\mi\kp \alpha\cdot s_l}f_\fp(\alpha)\alpha
	d s_\alpha |D_l|+\frac{2\lambda\omega^2}{\lambda+2\mu}\int_\ms\me^{\mi\kp \alpha\cdot s_l}f_\fp(\alpha)d s_\alpha \mathbb{P}_l^\top\hat{x}\label{limp} \\
	&+\frac{4\mu\omega^2}{\lambda+2\mu}\int_\ms\me^{\mi\kp \alpha\cdot s_l}f_\fp(\alpha)\alpha\otimes\alpha d s_\alpha \mathbb{P}_l^\top \hat{x}+\omega^2\int_\ms\me^{\mi\ks \alpha\cdot s_l}f_\fs(\alpha)\alpha^\bot
	d s_\alpha |D_l|\notag\\
	&+2\kp\ks\mu\int_\ms\me^{\mi\ks \alpha\cdot s_l}f_\fs(\alpha)\alpha^\bot\otimes\alpha d s_\alpha \mathbb{P}_l^\top \hat{x}+2\kp\ks\mu\int_\ms\me^{\mi\ks \alpha\cdot s_l}\alpha\otimes f_\fs(\alpha)\alpha^\bot d s_\alpha \mathbb{P}_l^\top \hat{x}\bigg],\notag
\end{align}
\begin{align}
	g_{l,\fs}(\hat{x}) &=\ks^{\frac{3}{2}}\me^{-\mi\ks\hat{x}\cdot s_l}\hat{x}^\bot\cdot\bigg[\omega^2\int_{\mathbb{S}}\me^{\mi\kp \alpha\cdot s_l}f_\fp(\alpha)\alpha
	d s_\alpha |D_l|+{2\lambda\kp\ks}\int_{\mathbb{S}}\me^{\mi\kp \alpha\cdot s_l}f_\fp(\alpha)d s_\alpha \mathbb{P}_l^\top \hat{x}\label{lims}\\
	&+{4\mu\kp\ks}\int_{\mathbb{S}}\me^{\mi\kp \alpha\cdot s_l}f_\fp(\alpha)\alpha\otimes\alpha d s_\alpha \mathbb{P}_l^\top \hat{x}+\omega^2\int_{\mathbb{S}}\me^{\mi\ks \alpha\cdot s_l}f_\fs(\alpha)\alpha^\bot
	d s_\alpha |D_l|\notag\\
	&+2\omega^2\int_{\mathbb{S}}\me^{\mi\ks \alpha\cdot s_l}f_\fs(\alpha)\alpha^\bot\otimes\alpha d s_\alpha \mathbb{P}_l^\top \hat{x}+2\omega^2\int_{\mathbb{S}}\me^{\mi\ks \alpha\cdot s_l}\alpha\otimes f_\fs(\alpha)\alpha^\bot d s_\alpha \mathbb{P}_l^\top \hat{x}\bigg].\notag
\end{align}
It is easy to verify that
\[||-\gamma^{-1}\rho^{-2}F^\rho-F^0||_{\ml}=\sup_{f\in \ml,||f||^2_2=1}||(-\gamma^{-1}\rho^{-2}F^\rho-F^0)f||^2_{\ml}=\oo(\rho\ln\rho+L^{-1/2}).\]
Since $F^\rho$ is compact and normal, perturbation theory~\cite{kato2013perturbation} ascertains the continuity of any finite system of eigenvalues as well as of the associated eigen-projection. In other words, we can study on the eigensystems of the limit far operator $F^0$ instead of the original $F^\rho$~\cite{hazardSelectiveAcousticFocusing2004}.

Before finding the eigensystem of $F^0$, we first recall a classical result of oscillatory integrals that will be very useful for our analysis. It can be seen
in \cite{steinHarmonicAnalysisRealVariable1993} and restated in \cite{burkardFarFieldModel2013} as follows.
\begin{theorem}[Burkard,Minut,Ramdani]\label{BMR}
	Let $S$ be a smooth hypersurface in $\mathbb{R}^N$ whose Gaussian curvature  is nonzero everywhere and let $\psi\in C_0^\infty(\mathbb{R}^N)$ such that
	$Supp(\psi)$ intersects $S$ in a compact subset of $S$. Then, as
	$|\xi|\rightarrow+\infty$, we have
	\begin{align*}
		\int_S\psi(\alpha)\me^{\mi\alpha\cdot\xi}d\alpha=\oo(|\xi|^{\frac{1-N}{2}}).
	\end{align*}
\end{theorem}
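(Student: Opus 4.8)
The plan is to treat this as an oscillatory integral over the $(N-1)$-dimensional surface $S$ with a linear phase and apply the method of stationary phase. First I would write $\xi = t\theta$ with $t=|\xi|\to\infty$ and $\theta\in\mathbb{S}^{N-1}$, so that the integral becomes
\[
I(\xi)=\int_S \psi(\alpha)\,\me^{\mi t\,\alpha\cdot\theta}\,d\sigma(\alpha),
\]
an oscillatory integral with large parameter $t$ and phase $\Phi_\theta(\alpha)=\alpha\cdot\theta$. Since $\mathrm{Supp}(\psi)\cap S$ is compact, a finite partition of unity subordinate to a covering by coordinate charts reduces the estimate to finitely many model integrals $\int \tilde\psi(u)\,\me^{\mi t\,\gamma(u)\cdot\theta}\,du$, where $\gamma$ is a local parametrization of $S$ over an open set of $\mathbb{R}^{N-1}$ and $\tilde\psi\in C_0^\infty$ absorbs the surface Jacobian $\sqrt{\det g}$, with $g$ the first fundamental form.

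The second step is the standard dichotomy between stationary and non-stationary phase. The critical points of $\Phi_\theta|_S$ are exactly the points where the tangential gradient vanishes, i.e. where $\theta$ is orthogonal to $T_\alpha S$, equivalently $\theta=\pm\nu(\alpha)$ with $\nu$ the unit normal. On any chart whose closure contains no such point, the phase gradient is bounded away from zero, and repeated integration by parts with the transport operator $L=(\mi t\,|\nabla\Phi_\theta|^2)^{-1}\nabla\Phi_\theta\cdot\nabla$ yields decay $\mathcal{O}(t^{-K})$ for every $K$, faster than the claimed rate. Hence only neighborhoods of critical points contribute.

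The decisive step is the analysis at a critical point $u_0$, where $\theta=\pm\nu(\gamma(u_0))$. There the Hessian of the phase is $\big(\partial_{u_j}\partial_{u_k}\gamma\cdot\theta\big)_{j,k}$, which equals (up to sign) the second fundamental form $\mathrm{II}$ of $S$ in the parametrization $\gamma$. The key observation is that $\det(\mathrm{Hess})=\det(\mathrm{II})=K\,\det(g)$, where $K$ is the Gaussian curvature; since $K\neq0$ everywhere and $\det g\neq0$, every critical point is nondegenerate. The stationary phase lemma then gives, for each such point, a contribution $(2\pi/t)^{(N-1)/2}\,|\det\mathrm{Hess}|^{-1/2}\,\tilde\psi(u_0)\,\me^{\mi t\Phi_\theta(u_0)+\mi\frac{\pi}{4}\mathrm{sgn}(\mathrm{Hess})}$ plus a remainder of order $t^{-(N-1)/2-1}$; note the metric factors cancel and leave a clean $|K|^{-1/2}$, so each critical point contributes $\mathcal{O}(t^{-(N-1)/2})$. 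Summing over the finitely many charts yields $\mathcal{O}(t^{(1-N)/2})=\mathcal{O}(|\xi|^{(1-N)/2})$.

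I expect the main obstacle to be securing uniformity of the estimate in the direction $\theta$, rather than the single-direction stationary phase computation. For fixed $\theta$ the critical points are isolated, but as $\theta$ varies their number and location change, so one must verify that $|K|$ stays bounded away from zero uniformly on the compact set $\mathrm{Supp}(\psi)\cap S$ and that the count of critical points stays bounded. This follows from the hypothesis that the Gauss map $\nu:S\to\mathbb{S}^{N-1}$ has nonvanishing Jacobian (equal to $K$), hence is a local diffeomorphism with uniformly controlled fibers over the compact piece of $S$. With this uniform nondegeneracy in hand, the stationary phase bound holds with constants independent of $\theta$, which completes the argument.
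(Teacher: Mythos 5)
The paper does not actually prove this statement: Theorem \ref{BMR} is quoted as a known result, with a pointer to Stein's book on oscillatory integrals and to Burkard--Minut--Ramdani where it is restated, so there is no in-paper argument to compare yours against step by step. Your stationary-phase proof is precisely the classical argument behind the cited result (decay of the Fourier transform of a surface-carried measure under nonvanishing Gaussian curvature), and its substantive steps are correct: localization by a partition of unity, non-stationary phase away from the points where $\theta=\pm\nu(\alpha)$, the identification of the Hessian of $\alpha\mapsto\alpha\cdot\theta$ at a critical point with $\pm\,\mathrm{II}$, and the identity $\det\mathrm{II}=K\det g$, which turns the curvature hypothesis into nondegeneracy of every critical point. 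The remark that the $\sqrt{\det g}$ factors cancel to leave $|K|^{-1/2}$ in the leading coefficient is also right.

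The one place where the sketch falls short of a complete proof is the uniformity in $\theta$ that you flag at the end, and the repair is not quite the one you propose. Bounding the number and location of critical points via the Gauss map does not address the real difficulty: the charts cannot be classified once and for all as stationary or non-stationary, because as $\theta$ varies a chart passes through configurations where $|\nabla_u(\gamma(u)\cdot\theta)|$ is small but nonvanishing throughout, and there the integration-by-parts constant degenerates. The standard fix is a pointwise dichotomy on the compact set $\mathrm{Supp}(\psi)\cap S$: for every $\theta$ and every point, either $|\nabla\Phi_\theta|\geq c$ or $|\det\mathrm{Hess}\,\Phi_\theta|\geq c$ with $c>0$ uniform (the second alternative holds on a neighborhood of the critical set because $K\neq 0$ there, and compactness makes the constants uniform), combined with a stationary-phase estimate that is uniform over such a family of phases. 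This is a known lemma rather than a missing idea, so I would call your proposal correct in substance with one standard technical ingredient left implicit.
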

%\vspace{1em}

Now, we are at the position to describe the eigensystem of $F^0$.
%\begin{theorem}
%	For $l\in\{1,\cdots,M\}$, let 
%\end{theorem}
From the explicit expression \eqref{limp} and \eqref{lims} of $F^0$, we can see that
\begin{align}
	\begin{aligned}
		\mathcal{R}(F^0)\subset\mathop{\mathrm{\bigoplus}}\limits_{1\leq l\leq M}\left(\mathcal{A}_l\oplus \mathcal{B}_l\right),
	\end{aligned}
\end{align}
where 
\begin{align}
	\begin{aligned}
		\mathcal{A}_l=&\textrm{span}\{(\kp^{\frac{3}{2}}\hat{x}\cdot\me^{-\mi\kp\hat{x}\cdot s_l}\overrightarrow{c},\ks^{\frac{3}{2}}\hat{x}^\bot\cdot\me^{-\mi\ks\hat{x}\cdot s_l}\overrightarrow{c})|\overrightarrow{c}\in\mathbb{C}^2\},\\
		\mathcal{B}_l=&\textrm{span}\{
		(\kp^{\frac{5}{2}}\hat{x}\cdot A\hat{x}\me^{-\mi\kp\hat{x}\cdot s_l},\ks^{\frac{5}{2}}\hat{x}^\bot\cdot A\hat{x}\me^{-\mi\ks\hat{x}\cdot s_l})|A\in\mathbb{C}^{2\times 2}\},
	\end{aligned}
\end{align}
which indicates the range dimension of $F^0$ is at most $6M$. A direct conclusion in linear
algebra shows that there exists at most $6M$ non-zero eigenvalues for the limit far field operator $F^0$. In the following, we search for eigenfunctions lie in $\mathcal{A}_l$ and $\mathcal{B}_l$ with $1\leq l\leq M$, respectively.

\noindent\textbf{Case \Rmnum{1}:} Consider the eigenvectors in the form of $g_l=(\kp^{\frac{3}{2}}\hat{x}\cdot\me^{-\mi\kp\hat{x}\cdot s_l}\overrightarrow{c},\ks^{\frac{3}{2}}\hat{x}^\bot\cdot\me^{-\mi\ks\hat{x}\cdot s_l}\overrightarrow{c})$ in $\mathcal{A}_l$.
After a straightforward insertion with the help of Theorem \ref{BMR}, we obtain
\begin{align}
	F^0(g_l)=\pi\omega^2|D_l|(\kp^{\frac{3}{2}}+\ks^{\frac{3}{2}})(\kp^{\frac{3}{2}}\hat{x}\cdot\me^{-\mi\kp\hat{x}\cdot s_l}\overrightarrow{c},\ks^{\frac{3}{2}}\hat{x}^\bot\cdot\me^{-\mi\ks\hat{x}\cdot s_l}\overrightarrow{c})+\oo((kL)^{-\frac{1}{2}}).
\end{align}
Therefore, we get two approximately equal eigenvalues $\lambda_{l,1}=\lambda_{l,2}=\pi\omega^2|D_l|(\kp^{\frac{3}{2}}+\ks^{\frac{3}{2}})$,
of which the corresponding eigenspace is $\textrm{span}\{(\kp^{\frac{3}{2}}\hat{x}\cdot\me^{-\mi\kp\hat{x}\cdot s_l}\overrightarrow{c},\ks^{\frac{3}{2}}\hat{x}^\bot\cdot\me^{-\mi\ks\hat{x}\cdot s_l}\overrightarrow{c})|\overrightarrow{c}\in\mathbb{C}^2\}$. For simplicity, we can select two
linearly independent elements \begin{align}\label{glp}
	{g}_{l,p}=(\kp^{\frac{3}{2}}\hat{x}\cdot\me^{-\mi\kp\hat{x}\cdot s_l}{e}_p,\ks^{\frac{3}{2}}\hat{x}^\bot\cdot\me^{-\mi\ks\hat{x}\cdot s_l}{e}_p),\quad p=1,2,
\end{align} as its eigenvectors.

\noindent\textbf{Case \Rmnum{2}:} Find the eigenvectors in the space $\mathcal{B}_l$, namely, in the form of 
\begin{align}\label{hl}
	h_l=(\kp^{\frac{5}{2}}\hat{x}\cdot A\hat{x}\me^{-\mi\kp\hat{x}\cdot s_l},\ks^{\frac{5}{2}}\hat{x}^\bot\cdot A\hat{x}\me^{-\mi\ks\hat{x}\cdot s_l}),
\end{align} 
which is slightly more complicated than Case \Rmnum{1}.

By the insertion of \eqref{hl} into the definition of $F^0$\eqref{limoper}, we obtain that,
\begin{align*}
	(F^0h_l)_\fp(\hat{x})&=\kp^{\frac{5}{2}}\me^{-\mi\kp\hat{x}\cdot s_l}\hat{x}\cdot\bigg[2\lambda\kp^{\frac{7}{2}}\int_{\mathbb{S}}\alpha \cdot A\alpha d s_\alpha \mathbb{P}_l^\top \hat{x}+4\mu\kp^{\frac{7}{2}}\int_{\mathbb{S}}(\alpha\cdot A\alpha)\alpha\otimes\alpha d s_\alpha \mathbb{P}_l^\top \hat{x}\\
	&+2\mu\ks^{\frac{7}{2}}\int_{\mathbb{S}}(\alpha^\bot\cdot A \alpha)(\alpha^\bot\otimes \alpha+\alpha\otimes \alpha^\bot) d s_\alpha \mathbb{P}_l^\top \hat{x}\bigg]+\oo((kL)^{-\frac{1}{2}}),\\
	(F^0h_l)_\fs(\hat{x})&=\ks^{\frac{5}{2}}\me^{-\mi\ks\hat{x}\cdot s_l}\hat{x}^\bot\cdot\bigg[2\lambda\kp^{\frac{7}{2}}\int_{\mathbb{S}}\alpha \cdot A\alpha d s_\alpha \mathbb{P}_l^\top \hat{x}+4\mu\kp^{\frac{7}{2}}\int_{\mathbb{S}}(\alpha\cdot A\alpha)\alpha\otimes\alpha d s_\alpha \mathbb{P}_l^\top \hat{x}\\
	&+2\mu\ks^{\frac{7}{2}}\int_{\mathbb{S}}(\alpha^\bot\cdot A \alpha)(\alpha^\bot\otimes \alpha+\alpha\otimes \alpha^\bot) d s_\alpha \mathbb{P}_l^\top \hat{x}\bigg]+\oo((kL)^{-\frac{1}{2}}).
\end{align*}
We select the basis of $A\in \mathbb{C}^{2\times2}$ as follows
\[A_1=\begin{pmatrix}
	1&0\\0&0
\end{pmatrix},A_2=\begin{pmatrix}
	0&0\\0&1
\end{pmatrix},A_3=\begin{pmatrix}
	0&1\\1&0
\end{pmatrix},A_4=\begin{pmatrix}
	0&1\\-1&0
\end{pmatrix}.\]
A straightforward calculation shows that $F^0h_{l,4}=0$. Thus there are at most 3 non-zero approximate eigenvectors in $\mathcal{B}_l$.

In order to find eigenvectors corresponding to non-zero eigenvalues in $\mathcal{B}_l$, it is now necessary to consider the  restriction operator of $F^0$
from $\ml$ to $\textrm{span}\{h_{l,i},i=1,2,3,4\}$ which is an invariant subspace for $F^0$.
After a tedious algebra, as shown in appendix \ref{appB}, we get the matrix representation,
%We are at the position to obtain the matrix representation of the liner operator.
\begin{align}
	F^0(h_{l,1},h_{l,2},h_{l,3},h_{l,4})=(h_{l,1},h_{l,2},h_{l,3},h_{l,4})\mathbb{F},
\end{align}
where $\mathbb{F}$ is given by \eqref{matf}. We see that for each $l=1,\cdots,M$, $\mathbb{F}$ has a zero eigenvalue with the eigenvector $(0,0,0,1)^\top$. What we want to obtain is the rest so-called “significant” eigenvalues and
corresponding eigenvectors. For simplicity, we denote the other three eigenvalues by $\zeta_{l,q},q=1,2,3$ and the corresponding eigenvectors by $v_{l,q},q=1,2,3$ respectively, i.e.
\[\mathbb{F}v_{l,q}=\zeta_{l,q}v_{l,q},\quad q=1,2,3.\]
Thus, we get the $3$ eigenvectors in terms of $\mathcal{B}_l$ as follows
\begin{align}\label{eigb}
	h_{l,q}=\left(\kp^{\frac{5}{2}}\hat{x}\cdot A_{l,q}\hat{x}\me^{-\mi\kp\hat{x}\cdot s_l},\ks^{\frac{5}{2}}\hat{x}^\bot\cdot A_{l,q}\hat{x}\me^{-\mi\ks\hat{x}\cdot s_l}\right),\quad q=1,2,3,
\end{align}
where
\[A_{l,q}=\sum_{j=1}^4 v_{l,q}^jA_j=\begin{pmatrix}
	v_{l,q}^1&v_{l,q}^3+v_{l,q}^4\\
	v_{l,q}^3-v_{l,q}^4&v_{l,q}^2
\end{pmatrix},\]
and $v_{l,q}^j$ denote the $j$-th element of $v_{l,q}$. 

Combining case \Rmnum{1} and \Rmnum{2}, we give the following conclusion.
\begin{theorem}\label{feig}
	For all $l=1,\cdots,M$ and all $p=1,2,q=1,2,3$, we recall the functions 
	${g}_{l,p}\in\ml$ defined in \eqref{glp} and $h_{l,q}\in\ml$ defined in \eqref{eigb}.
	%	\begin{align}\label{eigv}
		%		e_l(\alpha)=\me^{-\mi k\alpha\cdot s_l},\qquad g_{l,q}= h_{l,q}(\alpha)e_p(\alpha),
		%	\end{align}
	%	where $h_{l,q}\in L^2(\ms)$ is introduced in statement(\rmnum{4}) in Lemma \ref{eim}.
	We also set
	\begin{align*}
		\lambda_{l,p}^a=\pi\omega^2|D_l|(\kp^{\frac{3}{2}}+\ks^{\frac{3}{2}}),\qquad \lambda_{l,q}^b=\zeta_{l,q}.
	\end{align*}
	Then, as $kL\rightarrow\infty$, we have the following two results.
	\begin{enumerate}
		\item The functions ${g}_{l,p}$ satisfy
		\begin{align}\label{eig1}
			F^0{g}_{l,p}=\lambda_{l,p}^a{g}_{l,p}+\oo((kL)^{-\frac{1}{2}}),\qquad p=1,2.
		\end{align}
		\item The functions $h_{l,q}$ satisfy
		\begin{align}\label{eig2}
			F^0h_{l,q}=\lambda_{l,q}^bh_{l,q}+\oo((kL)^{-\frac{1}{2}}),\qquad q=1,2,3.
		\end{align}
	\end{enumerate}
\end{theorem}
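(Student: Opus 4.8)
The plan is to establish both relations by substituting the trial functions directly into the explicit formula \eqref{limp}--\eqref{lims} for $F^0$ and then splitting the sum over cavities into the self term $l'=l$ and the cross terms $l'\neq l$. For a trial function localized at cavity $l$, every integral produced by the summand $l'$ carries the combined exponential $\me^{\mi\kappa\alpha\cdot(s_{l'}-s_l)}$ with $\kappa\in\{\kp,\ks\}$, so each cross term is an angular integral over $\ms$ of a smooth amplitude against $\me^{\mi\alpha\cdot\xi}$, where $\xi$ is proportional to $s_{l'}-s_l$ and hence $|\xi|\gtrsim kL$. Since $\ms$ is a curve of nonvanishing curvature in $\mathbb{R}^2$, Theorem \ref{BMR} (with $N=2$) gives $\oo(|\xi|^{-1/2})=\oo((kL)^{-1/2})$ for each such integral, uniformly in $l\neq l'$. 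Hence only the self term $l'=l$, in which the oscillating exponentials cancel, survives up to the stated remainder, and the problem reduces to evaluating angular integrals of trigonometric monomials in $\alpha$ over $\ms$.

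For assertion (1) I would insert ${g}_{l,p}$ from \eqref{glp}, so that $f_\fp(\alpha)=\kp^{3/2}(\alpha\cdot e_p)\me^{-\mi\kp\alpha\cdot s_l}$ and $f_\fs(\alpha)=\ks^{3/2}(\alpha^\bot\cdot e_p)\me^{-\mi\ks\alpha\cdot s_l}$. The decisive structural feature is a parity dichotomy. The two terms of \eqref{limp}--\eqref{lims} proportional to $|D_l|$ carry the integrals $\int_\ms(\alpha\cdot e_p)\alpha\,ds_\alpha$ and $\int_\ms(\alpha^\bot\cdot e_p)\alpha^\bot\,ds_\alpha$, which are even (degree two) and equal to $\pi e_p$ because $\int_\ms\alpha\otimes\alpha\,ds_\alpha=\int_\ms\alpha^\bot\otimes\alpha^\bot\,ds_\alpha=\pi\bi$. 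Every remaining term, which multiplies $\mathbb{P}_l^\top\hat{x}$, carries an odd monomial in $\alpha$ (degree one or three) and therefore vanishes under $\alpha\mapsto-\alpha$; in particular $\int_\ms\alpha\,ds_\alpha=0$ removes the $\lambda$-term. Collecting the two surviving contributions reproduces ${g}_{l,p}$ multiplied by $\pi\omega^2|D_l|(\kp^{3/2}+\ks^{3/2})=\lambda_{l,p}^a$, which is \eqref{eig1}.

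For assertion (2) I would insert $h_{l,q}$ from \eqref{eigb}, so that $f_\fp(\alpha)=\kp^{5/2}(\alpha\cdot A_{l,q}\alpha)\me^{-\mi\kp\alpha\cdot s_l}$ and $f_\fs(\alpha)=\ks^{5/2}(\alpha^\bot\cdot A_{l,q}\alpha)\me^{-\mi\ks\alpha\cdot s_l}$. Now the parity dichotomy reverses: the $|D_l|$-terms involve $\int_\ms(\alpha\cdot A_{l,q}\alpha)\alpha\,ds_\alpha$ and $\int_\ms(\alpha^\bot\cdot A_{l,q}\alpha)\alpha^\bot\,ds_\alpha$, which are odd (degree three) and hence vanish, so the area contribution disappears entirely; only the $\mathbb{P}_l^\top\hat{x}$ terms, built from even (degree two and four) monomials, survive. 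This is exactly the content of the displayed formulas for $(F^0h_l)_\fp$ and $(F^0h_l)_\fs$ preceding the theorem (the Lamé coefficients being tidied using $\omega^2=(\lambda+2\mu)\kp^2=\mu\ks^2$). Since $\textrm{span}\{h_{l,1},\dots,h_{l,4}\}$ is thereby invariant under $F^0$ up to $\oo((kL)^{-1/2})$, I would read off the matrix representation $\mathbb{F}$ of \eqref{matf} (the degree-four integrals being evaluated in Appendix \ref{appB}), observe that $(0,0,0,1)^\top$ spans its kernel, and diagonalize the residual action to obtain the eigenpairs $(\zeta_{l,q},v_{l,q})$, $q=1,2,3$. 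Setting $A_{l,q}=\sum_{j}v_{l,q}^jA_j$ then yields $F^0h_{l,q}=\zeta_{l,q}h_{l,q}+\oo((kL)^{-1/2})$, which is \eqref{eig2} with $\lambda_{l,q}^b=\zeta_{l,q}$.

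The bookkeeping-heavy but routine part is the evaluation of the degree-four angular integrals that populate $\mathbb{F}$, together with the verification that the basis choice $A_1,\dots,A_4$ behaves as asserted (in particular that $A_4$ generates a null direction). The genuine analytic content sits in a single place, the uniform control of the inter-cavity coupling, and this is where I expect the main obstacle. One must confirm that after factoring out the smooth polynomial-in-$\alpha$ amplitudes every off-diagonal integral really does reduce to the oscillatory form $\int_\ms\psi(\alpha)\me^{\mi\alpha\cdot\xi}\,ds_\alpha$ with $|\xi|\gtrsim kL$ to which Theorem \ref{BMR} applies, so that the $\oo((kL)^{-1/2})$ estimate holds simultaneously for all pairs $l\neq l'$ and may be summed over the finitely many cavities without loss in the rate.
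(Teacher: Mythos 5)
Your proposal is correct and follows essentially the same route as the paper: the paper also proves Theorem \ref{feig} by direct insertion of ${g}_{l,p}$ and $h_{l,q}$ into the explicit formulas \eqref{limp}--\eqref{lims}, invoking Theorem \ref{BMR} (with $N=2$, $\xi=\kappa(s_{l'}-s_l)$, $|\xi|\gtrsim kL$) to reduce the inter-cavity coupling to $\oo((kL)^{-1/2})$, using the odd/even parity of the angular monomials together with $\int_\ms\alpha\otimes\alpha\,ds_\alpha=\int_\ms\alpha^\bot\otimes\alpha^\bot\,ds_\alpha=\pi\bi$ to isolate the surviving self terms, and then diagonalizing the matrix $\mathbb{F}$ of Appendix \ref{appB} on the invariant subspace $\mathrm{span}\{h_{l,1},\dots,h_{l,4}\}$ with $A_4$ spanning the kernel. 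The parity dichotomy you highlight (area terms survive in Case I, polarization terms survive in Case II) is exactly the mechanism implicit in the paper's Cases I and II preceding the theorem statement.
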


Summing up, we have proved that in the regime of small and well-resolved elastic cavities, the limit far field operator $F^0$ admits $5M$ approximate non-zero eigenvalues:
$\lambda_{l,p}^a,\lambda_{l,q}^b$ for $l=1,\cdots,M,p=1,2,q=1,2,3$.
\begin{remark}
	In the case when cavities are disks, $\{{g}_{l,p},h_{l,q}\}$ can be explicitly constructed since the polarizability tensor $\mm_l$ is already diagonal. For the general case, $\{{g}_{l,p},h_{l,q}\}$ must be found through numerical computation. Thus, Theorem \ref{feig} can be taken as a generalization of Theorem \ref{smalleig} to general shaped cases.
\end{remark}

By combining the results of Theorems \ref{feig} and \ref{BMR}, the following theorem  provides the expected selective focusing properties with the eigenfunctions of the far field operator(thus of time reversal operator).

\begin{theorem}\label{gloo}
	For $1\leq l\leq M$, the incident Herglotz wave associated with the approximate eigenfunctions ${g}_{l,p},p=1,2$ and $h_{l,q},q=1,2,3$ given by \eqref{glp} and \eqref{eigb}
	will selectively focus on the $l$th cavity.
\end{theorem}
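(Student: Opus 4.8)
The plan is to substitute the explicit eigenfunctions into the Herglotz wave formula \eqref{herglotz} and recognize the resulting field as a superposition of oscillatory integrals over $\ms$ whose decay is governed by Theorem \ref{BMR}. First I would take $f=g_{l,p}$ from \eqref{glp}, i.e. $f_\fp(\alpha)=\kp^{3/2}(\alpha\cdot e_p)\me^{-\mi\kp\alpha\cdot s_l}$ and $f_\fs(\alpha)=\ks^{3/2}(\alpha^\bot\cdot e_p)\me^{-\mi\ks\alpha\cdot s_l}$, and insert into \eqref{herglotz}. The key structural observation is that the factor $\me^{-\mi\kp\alpha\cdot s_l}$ merges with $\me^{\mi\kp\alpha\cdot x}$ to produce the phase $\kp\alpha\cdot(x-s_l)$ in the compressional part, and similarly $\ks\alpha\cdot(x-s_l)$ in the shear part, each multiplied by a smooth, polynomial-in-$\alpha$ vector amplitude. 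The same picture holds for $f=h_{l,q}$ from \eqref{eigb}, the only difference being the quadratic amplitude $\hat{x}\cdot A_{l,q}\hat{x}$.

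Second, I would invoke Theorem \ref{BMR} with $N=2$: the unit circle $\ms$ has constant nonvanishing Gaussian curvature and the amplitudes extend to $C_0^\infty(\mr)$, so each oscillatory integral is $\oo(|\kp(x-s_l)|^{-1/2})$, respectively $\oo(|\ks(x-s_l)|^{-1/2})$, as $|x-s_l|\to\infty$. Since $\kp\sim\ks\sim k$, this yields
\[
\boldsymbol{u}^{\boldsymbol{i}}_{g_{l,p}}(x)=\oo\!\left((k|x-s_l|)^{-1/2}\right),\qquad
\boldsymbol{u}^{\boldsymbol{i}}_{h_{l,q}}(x)=\oo\!\left((k|x-s_l|)^{-1/2}\right),
\]
so every one of the five incident fields decays away from the center $s_l$.

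Third, to exhibit the concentration I would examine the field on the focusing scale $|x-s_l|=\oo(k^{-1})$, where the phase is $\oo(1)$ and no oscillatory cancellation occurs. For $g_{l,p}$ the leading value at $x=s_l$ is controlled by $\int_\ms\alpha\otimes\alpha\,ds_\alpha=\pi\bi$, so the field is $\oo(1)$ and nonzero in an $\oo(k^{-1})$ neighborhood of $s_l$; comparing this $\oo(1)$ size with the $\oo((k|x-s_l|)^{-1/2})$ decay elsewhere shows the wave is localized about $s_l$. Finally, because $|s_{l'}-s_l|\ge L$ for $l'\neq l$ and we work in the regime $k^{-1}\ll L$, the field at any other cavity center obeys $\boldsymbol{u}^{\boldsymbol{i}}(s_{l'})=\oo((kL)^{-1/2})\to 0$, which is exactly the selectivity: the mode built from $g_{l,p}$ or $h_{l,q}$ illuminates the $l$th cavity while remaining negligible on all the others.

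The hard part will not be the decay estimate, which is immediate from Theorem \ref{BMR}, but verifying the genuine non-degeneracy of the field near $s_l$ for the $h_{l,q}$ family. There the relevant cubic angular integrals $\int_\ms(\alpha\cdot A_{l,q}\alpha)\alpha\,ds_\alpha$ (and their shear analogue) vanish by odd symmetry at $x=s_l$ exactly, so the field has a null precisely at the center and the concentration must instead be read off from the first nonvanishing term when the phase is expanded about $s_l$ on the scale $|x-s_l|\sim k^{-1}$. Showing carefully that the field nevertheless remains $\oo(1)$ on this focusing scale—while still decaying at rate $(k|x-s_l|)^{-1/2}$ far away—is the delicate step that distinguishes the $h_{l,q}$ modes from the simpler $g_{l,p}$ modes.
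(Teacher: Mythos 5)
Your proposal follows essentially the same route as the paper: the paper's proof consists of writing the incident Herglotz waves as oscillatory integrals with phase $\kp\alpha\cdot(x-s_l)$ (resp.\ $\ks\alpha\cdot(x-s_l)$) and smooth polynomial amplitudes, then invoking Theorem \ref{BMR} to get the $\oo((k|x-s_l|)^{-1/2})$ decay away from $s_l$ — exactly your first two steps. Your third step, in particular the observation that $\boldsymbol{u}^{\boldsymbol{i}}_{h_{l,q}}(s_l)=0$ by odd symmetry of the cubic angular integrand so that the concentration for those modes must be verified on the scale $|x-s_l|\sim k^{-1}$ rather than at the center itself, is a legitimate refinement that the paper does not address: its proof stops at the decay estimate and never checks non-degeneracy of the field near the focal point.
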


The proof is a simple application of Theorem \ref{BMR} since the incident waves
$$	\boldsymbol{u}^{\boldsymbol{i}}_{g_{l,p}}(x) =\int_{\ms} \kp^{\frac{3}{2}}\alpha\otimes\alpha{\me_p}\me^{\mi \kp \alpha\cdot (x-s_l)}  +\ks^{\frac{3}{2}}\alpha^\bot\otimes\alpha^\bot{\me_p}\me^{\mi \ks\alpha \cdot (x-s_l)} ds_{\alpha} $$
and $$\boldsymbol{u}^{\boldsymbol{i}}_{h_{l,q}}(x)=\int_{\ms}\kp^{\frac{5}{2}} (\alpha\cdot A_{l,q}\alpha)\alpha\me^{\mi \kp \alpha\cdot (x-s_l)}  +\ks^{\frac{5}{2}}(\alpha^\bot\cdot A_{l,q}\alpha)\alpha^\bot\me^{\mi \ks\alpha \cdot (x-s_l)} ds_{\alpha}$$ decay rapidly away from the $l$th particle as $\oo((k|x-s_l|)^{-\frac{1}{2}})$.
\begin{remark}
	Clearly, similar selective focusing results hold for the there dimensional problem. We actually find that each small cavity gives rise to nine significant eigenvalues in three dimensions. Details will be given in a forthcoming paper.
\end{remark}

\section{Numerical experiments}\label{exper}
In this section, we test our theoretical results and the efficiency of DORT method in several examples. The general procedure is: first apply the forward problem solver to obtain the far field data, and then use the DORT method to recover the locations and shapes of cavities. We use the so-called
spectral approach (also known as multipole expansion or Mie series method)\cite{thierryDiffOpensourceMatlab2015a} for the discal case, otherwise we compute by boundary element method. For the inversion part, to avoid inverse crime and test the robustness of the algorithm, we add 5\% Gaussian noise to the simulated far field data. Once the numerical far field operator $F$ is found, the eigenvalues and eigenvectors are obtained through
the ‘$eig$’ command in MATLAB, and Herglotz wave $\uincf$ is evaluated in a straightforward manner, although NUFFT\cite{greengardAcceleratingNonuniformFast2004}
can be used to accelerate the evaluation if the time to evaluate $\uincf$ becomes expensive.

Throughout all the examples, the Lam\'e constants and angular frequency are chosen to be $\lambda=1,\mu=2,\omega=2$, leading to the compressional and shear wavenumbers $\kp=\frac{2\sqrt{5}}{5},\ks=\sqrt{2}$, respectively. The emission and reception directions are obtained by discretizing uniformly the unit circle $[0,2\pi]$ using 360 points. 
\subsection{Example 1: a single cavity}
We first consider the case of one single discal cavity located at $(5,0)$ with radius $R=0.002$. Then we consider a peanuthull shape with boundary given by 
\[\begin{cases*}
	x=5+0.002(2+\sin(2t))\cos(t),\\
	y=0.002(2+\sin(2t))\sin(t),
\end{cases*}\qquad t\in [0,2\pi].\]
We show in table \ref{lamdis} the first six largest eigenvalues of the time reversal operator for these two different shapes. In particular, we observe that for each setup, they admits five significant eigenvalues respectively, which is in agreement with our theoretical result. Figure \ref{ex1} shows the modulus of first thirty eigenvalues and the magnitude of the Herglotz waves with eigenfunctions corresponding to the first five eigenvalues as kernels for a single disk, so does figure \ref{ex2} for a single peanuthull. We see that the Herglotz wave function associated to these eigenvalues achieves its maximum at the location of the unknown cavity. On the other hand, the shape reconstruction is relatively difficult due to the small size of the cavity.

In order to observe the phenomenon when the cavity is relatively large, we set the boundary of the obstacle as follows
\[\begin{cases*}
	x=3(2+\sin(2t))\cos(t),\\
	y=3(2+\sin(2t))\sin(t),
\end{cases*}\qquad t\in [0,2\pi].\]
From figure \ref{ex3}, we can see that there are not five significant eigenvalues anymore, but the global focusing is still achieved. We also observe that compared to the small cavity, it is easier to reconstruct the shape in this case.
\begin{table}[ht]
	\footnotesize
	\caption{Significant eigenvalues of the time reversal operator for a disk and a peanuthull.}
	\label{lamdis}
	\centering
	\begin{tabular}{|c|c|c|c|c|c|c|}\hline
		Shape&$\lambda_1$&$\lambda_2$&$\lambda_3$&$\lambda_4$&$\lambda_5$&$\lambda_6$\\ \hline
		Disk&$0.005021$&$0.005021$&$0.000670$&$0.000670$&$0.000472$&$4.1688\me-15$\\ \hline
		Peanuthull&$0.002011$&$0.001929$&$0.000167$&$0.000167$&$0.000111$&$6.3113\me-16$\\ \hline
	\end{tabular}
	%	\vspace{1em}
\end{table}
%%\begin{figure}
%%	\centering
%%\includegraphics[scale=0.7]{test.jpg}
%%\end{figure}
\begin{figure}
	\centering
	\includegraphics[scale=0.8]{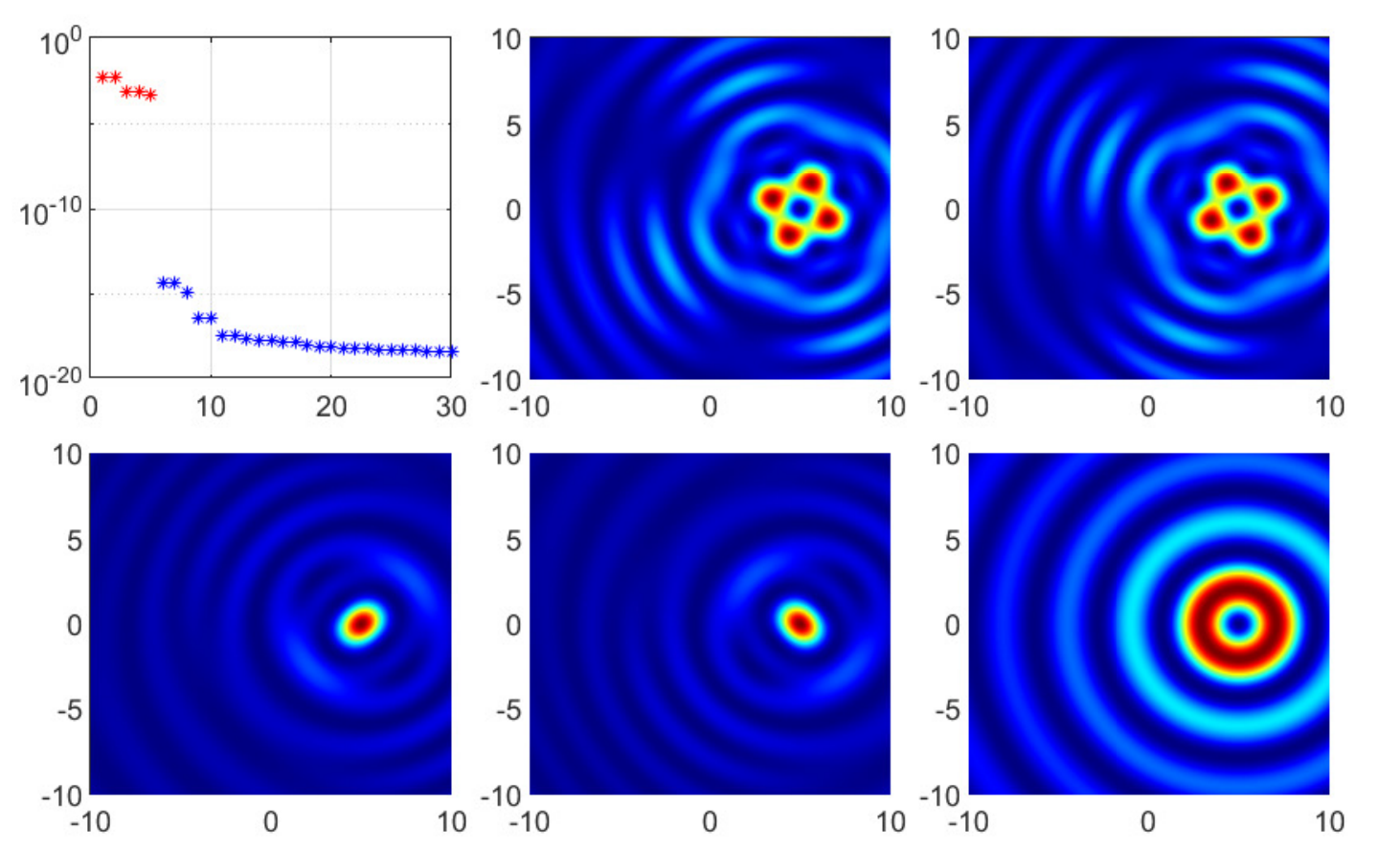}
	\caption{Imaging of a single disk: Eigenvalues of the time reversal operator(top left) and the Herglotz waves with eigenfunctions of the first five eigenvalues (the others).}\label{ex1}
\end{figure}
\begin{figure}
	\centering
	\includegraphics[scale=0.8]{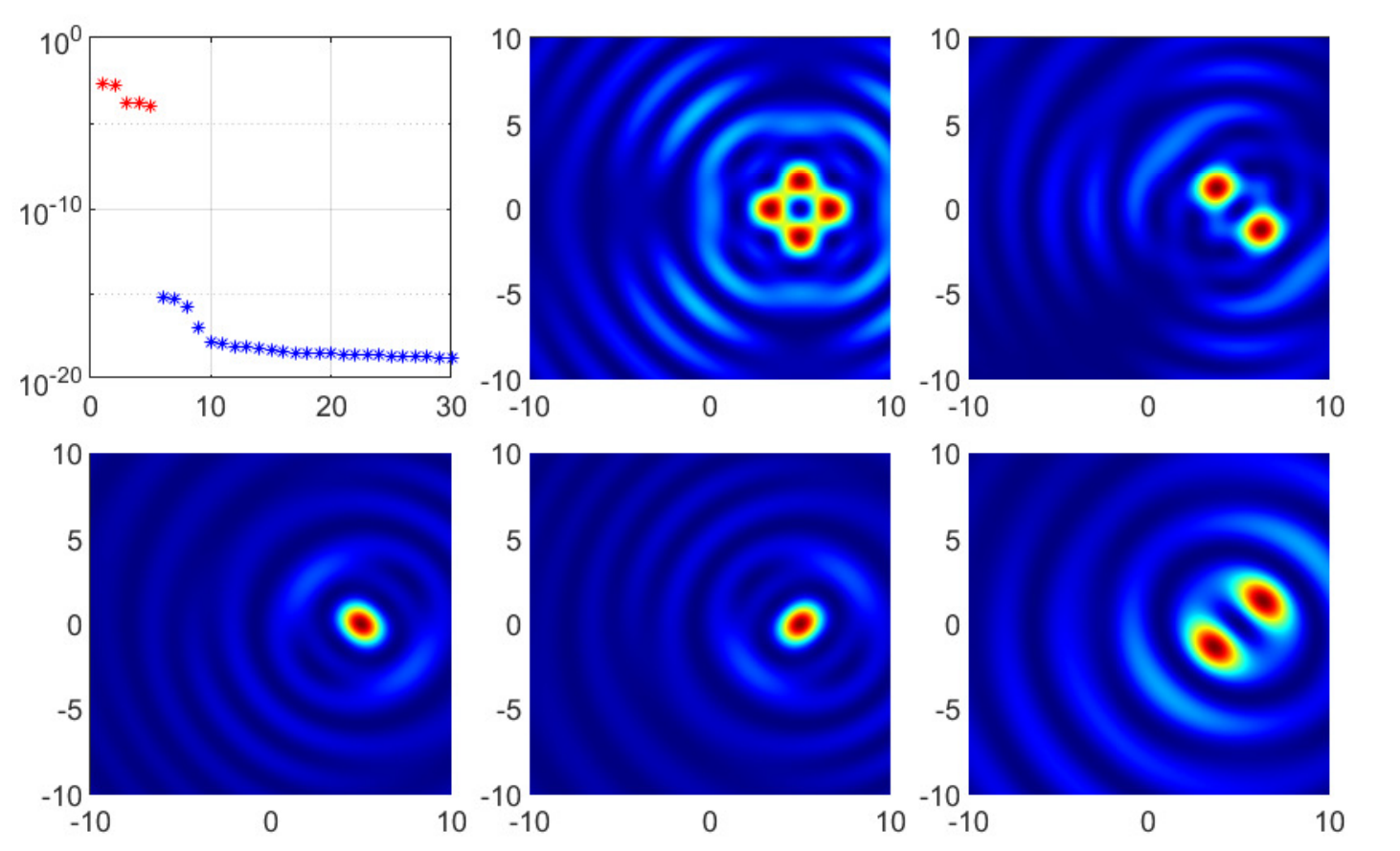}
	\caption{Imaging of a single peanuthull: Eigenvalues of the time reversal operator (top left) and the Herglotz waves with eigenfunctions of the first five eigenvalues (the others).}\label{ex2}
\end{figure}
\begin{figure}
	\centering
	\includegraphics[scale=0.7]{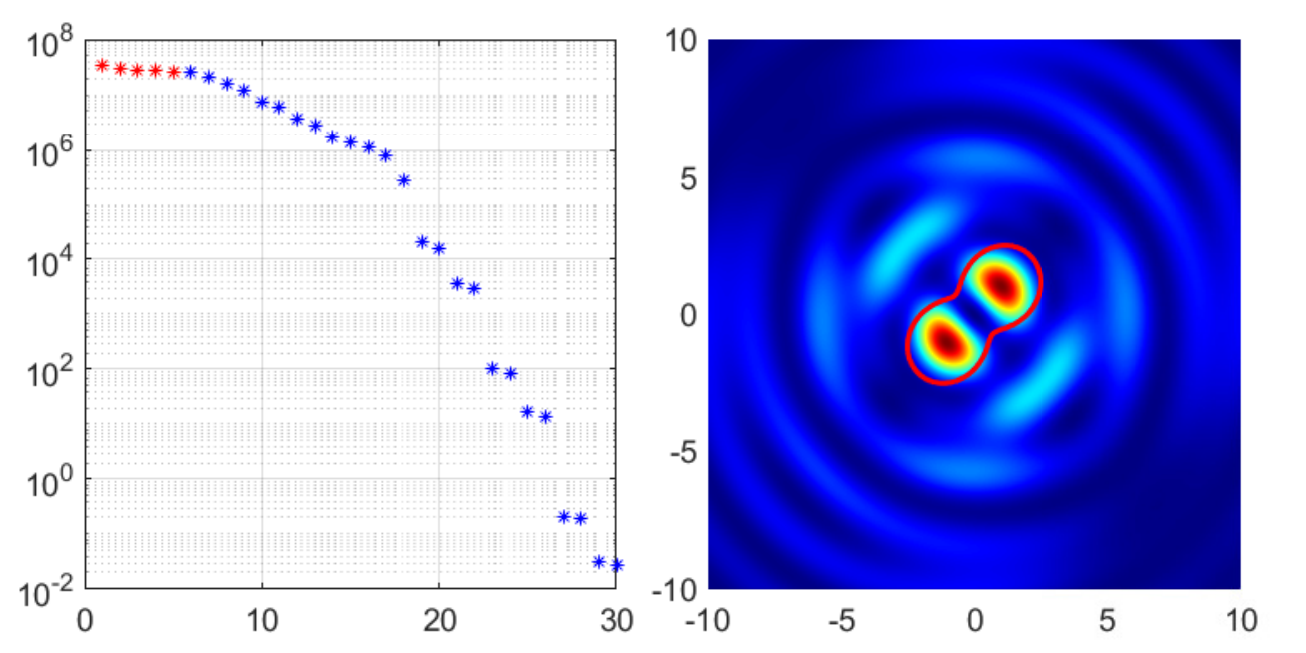}
	\caption{Imaging of a single large peanuthull: Eigenvalues of the time reversal operator (left) and the Herglotz waves with eigenfunction of the first eigenvalue (right). The red curve on the right is the exact shape.}\label{ex3}
\end{figure}
\subsection{Example 2: a pair of discal cavities}
Let the first discal cavity be given as the same as example 1 and consider the second cavity located at the position $(-5,0)$ with radius $R=0.004$.
In figure \ref{ex4} one can see that the first ten eigenvalues (ordered by their magnitude) of the time reversal mirror are significantly larger than the subsequent eigenvalues. Moreover, the first five eigenvalues are due to the first cavity, as well as the next five eigenvalues due to the second cavity. This indicates that the Herglotz waves associated to the first five eigenvalues all focus on one cavity and the Herglotz waves generated from eigenfunctions corresponding to the next five eigenvalues should focus on the other cavity. Indeed, as can be seen in figure \ref{ex5}, this is the case that one expects from the theoretical results.
\begin{figure}
	\centering
	\includegraphics[scale=0.7]{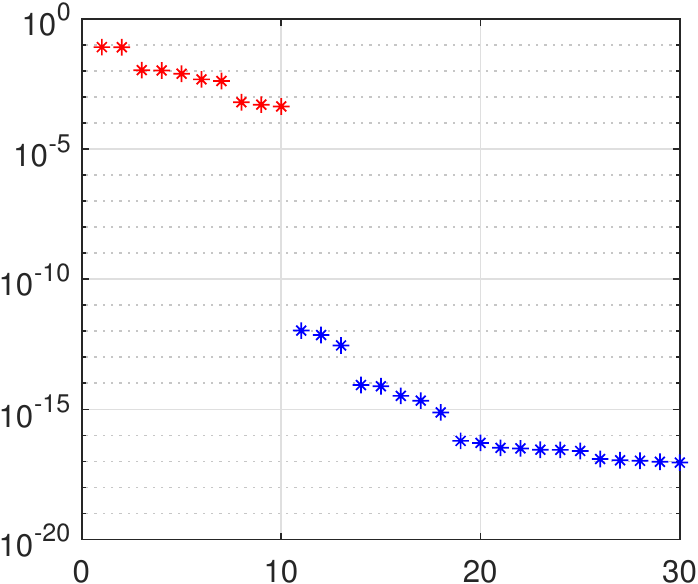}
	\caption{Eigenvalues of the time reversal operator in the case of two disks with different sizes.}\label{ex4}
\end{figure}
\begin{figure}
	\centering
	\includegraphics[scale=1]{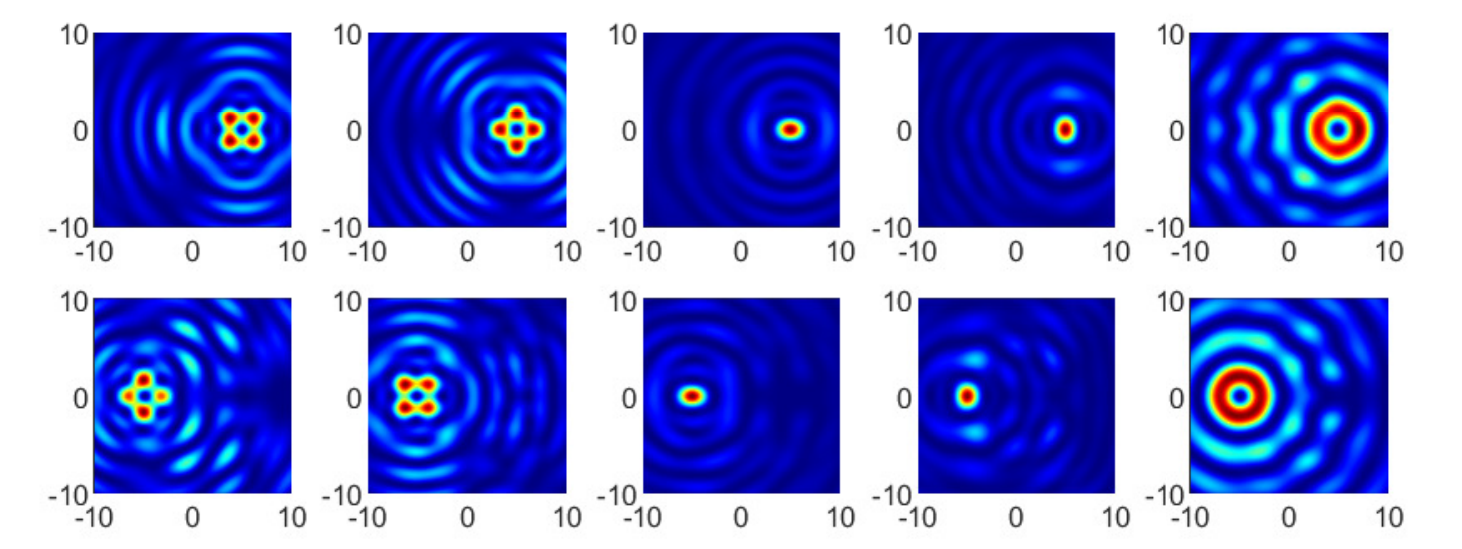}
	\caption{Imaging of two disks with different sizes: Herglotz waves with eigenfunctions of the first five eigenvalues (top row) and the next five eigenvalues (bottom row).}\label{ex5}
\end{figure}
On the other hand, if the two elastic cavities $D_1$ and $D_2$ located at $(5,0)$ and $(-5,0)$ with the same radius $R=0.002$, the selective focusing will disappear due to symmetry. The obtained eigenvalues are given in figure \ref{ex6}, which still shows ten significant eigenvalue. The associated Herglotz waves are presented in figure \ref{ex7}. Due to the fact that the eigenvalues associated to $D_1$ are equal to those associated to $D_2$, we observe that the Herglotz waves do not selectively focus on one but on both of the cavities.
\begin{figure}
	\centering
	\includegraphics[scale=0.7]{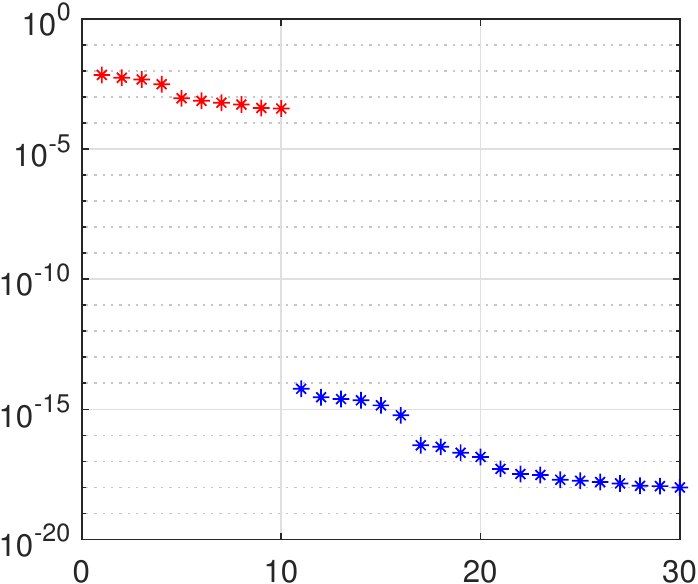}
	\caption{Eigenvalues of the time reversal operator in the case of two disks with the same size.}\label{ex6}
\end{figure}
\begin{figure}
	\centering
	\includegraphics[scale=1]{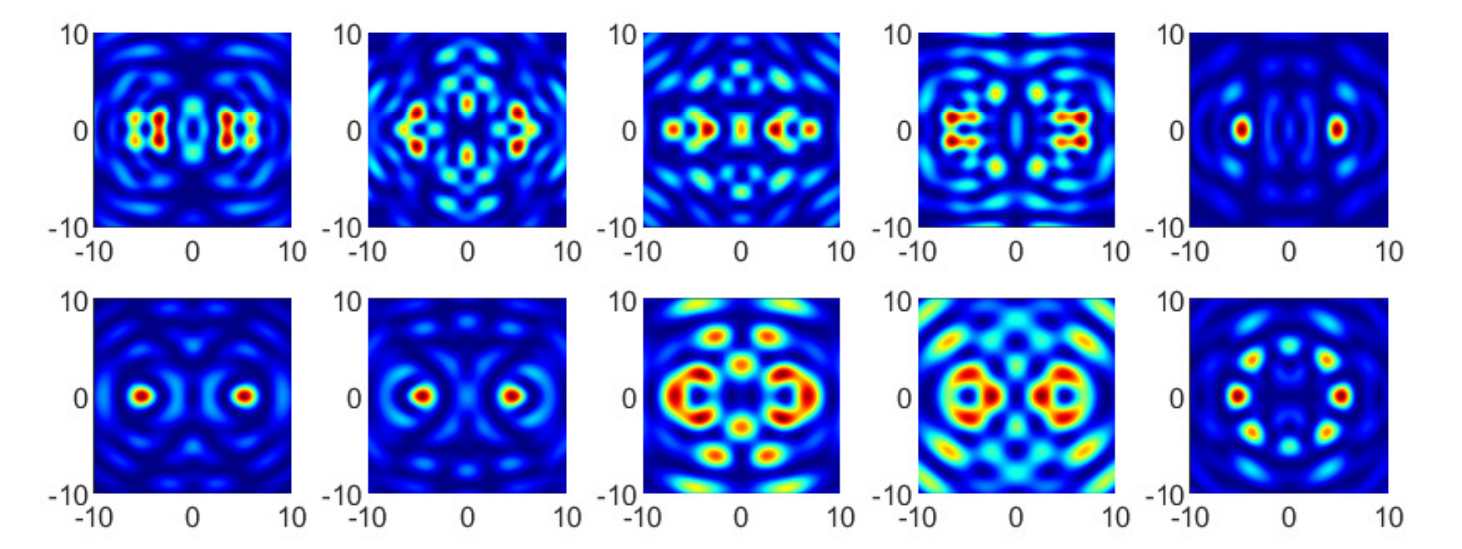}
	\caption{Imaging of two disks with the same size: Herglotz waves with eigenfunctions of the first ten eigenvalues.}\label{ex7}
\end{figure}

\subsection{Example 3: nine cavities with different sizes}
To further show the efficiency of DORT method, we give a numerical example of recovering the locations of nine disks with different sizes given in table \ref{tab2}. We present the eigenvalues in figure \ref{ex12}, which shows that the first forty-five dominant eigenvalues (red points) are in agreement with our theoretical results. In figure \ref{ex13}, the absolute values of the associated Herglotz waves are shown, which indicate that the DORT method recovers the location of the unknown disks accurately.
\begin{table}[h]
	\footnotesize
	\caption{Distribution of the nine disks.}
	\label{tab2}
	\centering
	\begin{tabular}{|c|c|c|c|c|c|c|c|c|c|}\hline
		Radius&$0.01$&$0.02$&$0.03$&$0.04$&$0.05$&$0.06$&$0.07$&$0.08$&$0.09$\\ \hline
		Center&$(-12,12)$&$(0,12)$&$(12,12)$&$(-12,0)$&$(0,0)$&$(12,0)$&$(-12,-12)$&$(0,-12)$&$(12,-12)$\\ \hline
	\end{tabular}
\end{table}
\begin{figure}
	\centering
	\includegraphics[scale=0.7]{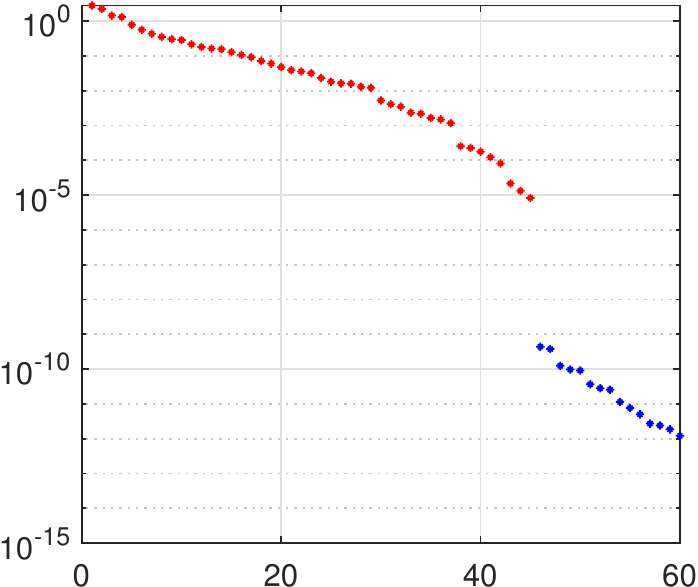}
	\caption{Eigenvalues of the time reversal operator in the presence of nine disks.}\label{ex12}
\end{figure}
\begin{figure}
	\centering
	\includegraphics[scale=0.7]{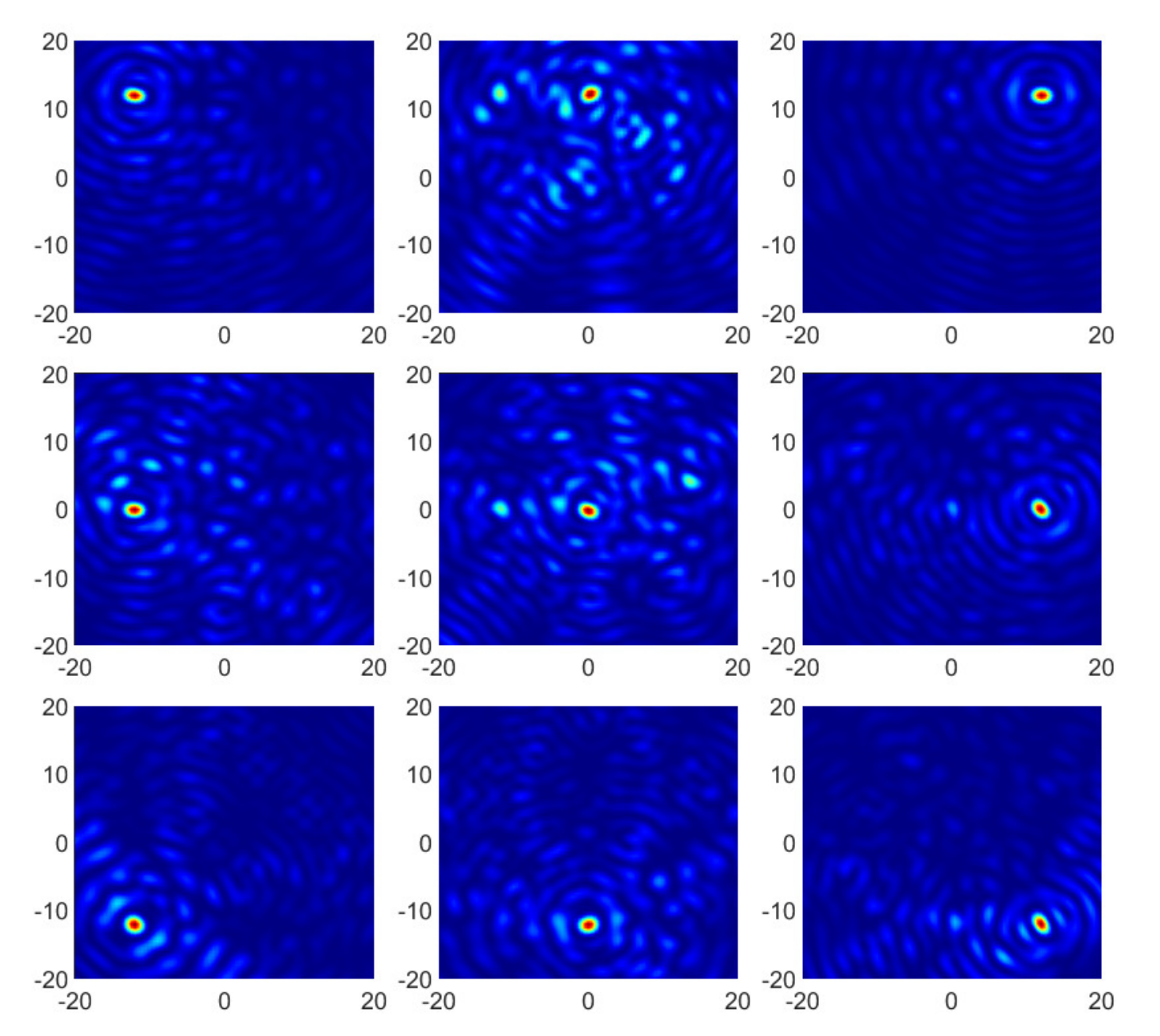}
	\caption{Imaging of nine disks with different sizes: Herglotz waves with eigenfunctions corresponding to nine dominant eigenvalues selected from the first forty-five eigenvalues.}\label{ex13}
\end{figure}

\subsection{Example 4: two asymmetric cavities with a limited aperture data}
In the last example, we test the DORT method with an open time reversal mirror. In other words, we only use limited aperture far field data. Assume the two different sized discal cavities are given as the same as in Example 2 and an open TRM  is given with the measured aperture
\begin{align}\label{opensym}
	\ps=\{\alpha\in\ms|\alpha=(\cos\phi,\sin\phi),\phi\in[\pi/4,3\pi/4]\cup[5\pi/4,7\pi/4]\}.
\end{align}
In figure \ref{ex8} we present the eigenvalues obtained from the open TRM \eqref{opensym}. In figure \ref{ex9}, the absolute values of the associated Herglotz waves are shown. One can see that the selective focusing is still achieved, however, with a lot of oscillations at the aperture that has no measured data.
% Rigorous mathematical analysis for such phenomenon will be explored in the future.
\begin{figure}
	\centering
	\includegraphics[scale=0.7]{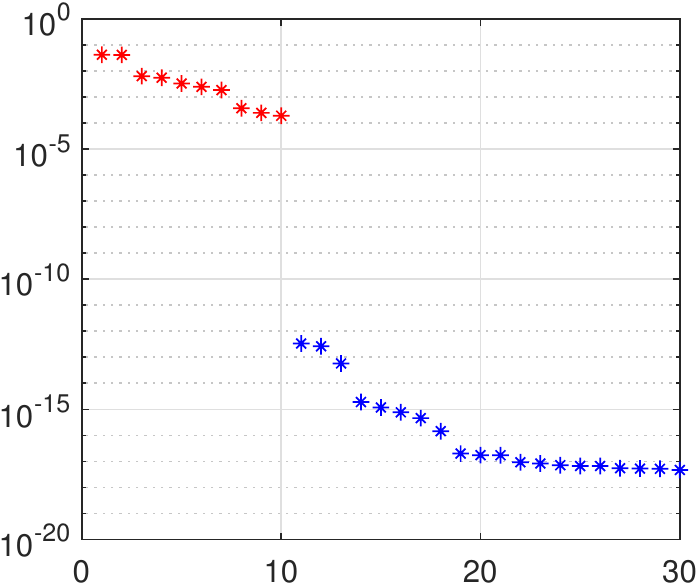}
	\caption{Eigenvalues of the time reversal operator in the case of an open TRM.}\label{ex8}
\end{figure}
\begin{figure}
	\centering
	\includegraphics[scale=1]{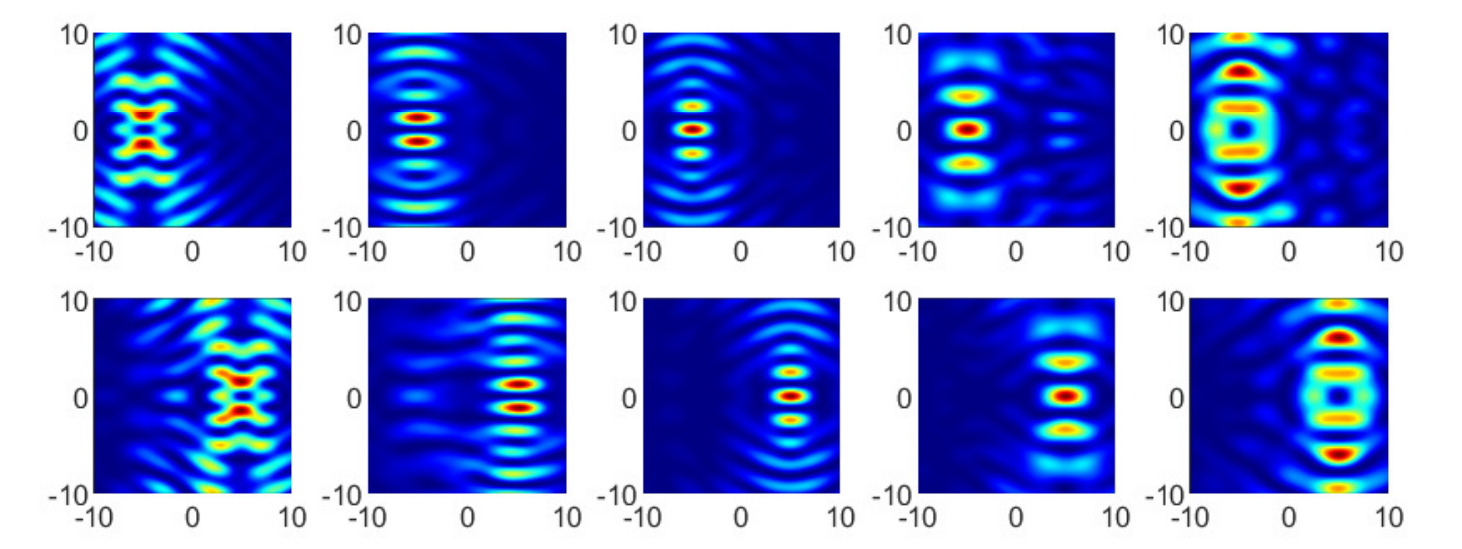}
	\caption{Imaging of two disks in the case of an open TRM: Herglotz waves with eigenfunctions of the first ten eigenvalues.}\label{ex9}
\end{figure}

\section{Conclusion}
In this work, we presented a rigorous mathematical justification for the DORT method in the context of small elastic cavities in two dimensions based time harmonic elastic scattering model. The main result states that each cavity gives rise to 5 significant eigenvalues whose corresponding eigenvectors can be used to achieve selective focusing, provided the cavities are small and well-resolved. These theoretical results were further confirmed by numerical simulations. One can see that the DORT method can effectively recover the locations of the unknown cavities with a slightly less satisfactory result on the shape reconstruction, especially in the case of limited-aperture data. Therefore, our future work includes the analysis for selective focusing with limited far field data as well as the approach to improve the shape recovery when imaging multiple cavities.

\appendix

\section{Scattering matrix of a discal cavity}\label{appA}
The scattering matrix $\mathcal{S}$ for a disk  $S_0$ is given in the form of
\begin{eqnarray}\label{scat_mat} \mathcal{S}=\mbox{diag}[\cdots,\mathcal{S}_{-1},\mathcal{S}_{0},\mathcal{S}_{1},\cdots],
\end{eqnarray}
where each $\mathcal{S}_{n}$ is a $2\times 2$ matrix block.
According to the boundary condition \eqref{rigidboundary} on $S_0$,  mode matching yields 
\begin{eqnarray}\label{scat_spher}
	\begin{bmatrix}
		\alpha_{n} \\
		\beta_{n} \\
	\end{bmatrix}
	=\mathcal{S}_{n}
	\begin{bmatrix}
		a_{n} \\
		b_{n} \\
	\end{bmatrix}, n \in\mathbb{Z},
\end{eqnarray} 
%	with $\alpha_{0,0}=\beta_{0,0}=a_{0,0} = b_{0,0}=0$. It holds from \cite{Louer2014} that $\gamma_{0,0} = -\frac{j'_0(\kp R)}{{h_0^{(1)}}'(\kp R)}c_{0,0}$, and $\beta_{n,m}=-\frac{j_n(\ks R)}{{h_n^{(1)}}(\ks R)}b_{n,m}$ for $n\ge 1$. For the other elements in $\mathcal{S}_{n,m}$, we have
$\mathcal{S}_{n}$ is given as follows
\begin{eqnarray}
	\mathcal{S}_{n}	=-\begin{bmatrix}
		d_{n}^{11} & d_{n}^{12} \\
		d_{n}^{21} & d_{n}^{22}
	\end{bmatrix}^{-1}
	\begin{bmatrix}
		e_{n}^{11} & e_{n}^{12} \\
		e_{n}^{21} & e_{n}^{22}
	\end{bmatrix},
\end{eqnarray}
where 
\begin{eqnarray}
	d_{n}^{11} & = &2\mu\kp^2 H^{(1)''}_n
	(\kp R)-\lambda\kp^2H^{(1)}_n
	(\kp R),\quad d_{n}^{12}  = -2\mu \mi n\frac{\ks R H^{(1)'}_n
		(\ks R)-H^{(1)}_n
		(\ks R)}{R^2} ,
	\\
	d_{n}^{21} & = &2\mu \mi n \frac{\kp R H^{(1)'}_n
		(\kp R)-H^{(1)}_n
		(\kp R)}{R^2},\quad d_{n}^{22}  = 2\mu \ks^2H^{(1)''}_n
	(\ks R)+\mu\ks^2H^{(1)}_n(\ks R),
\end{eqnarray}
and $e_{n}^{ij}$ is simply replacing  $H_n^{(1)}$ in $d_{n}^{ij}$ by $J_n$.
\section{Matrix Representation of $F^0$ in $\mathcal{B}_l$}\label{appB}
In this appendix, we compute the matrix representation of $F^0$ restricted in
$\mathcal{B}_l$ with the selected basis $\{h_{l,1},h_{l,2},h_{l,3},h_{l,4}\}$.
For $A=\begin{pmatrix}
	a_{11}&a_{12}\\
	a_{21}&a_{22}
\end{pmatrix}$, it holds
\begin{align*}
	&2\lambda\kp^{\frac{7}{2}}\int_{\mathbb{S}}\alpha \cdot A\alpha d s_\alpha \mathbb{P}^\top +4\mu\kp^{\frac{7}{2}}\int_{\mathbb{S}}(\alpha\cdot A\alpha)\alpha\otimes\alpha d s_\alpha \mathbb{P}^\top 
	+2\mu\ks^{\frac{7}{2}}\int_{\mathbb{S}}(\alpha^\bot\cdot A \alpha)(\alpha^\bot\otimes \alpha+\alpha\otimes \alpha^\bot) d s_\alpha \mathbb{P}^\top \\
	=&\left(2\lambda\kp^{\frac{7}{2}}\pi\begin{pmatrix}
		a_{11}+a_{22}&0\\0&a_{11}+a_{22}
	\end{pmatrix}+\mu\kp^{\frac{7}{2}}\pi\begin{pmatrix}
		3a_{11}+a_{22}&a_{12}+a_{21}\\a_{12}+a_{21}&a_{11}+3a_{22}
	\end{pmatrix}+\mu\ks^{\frac{7}{2}}\pi\begin{pmatrix}
		a_{11}-a_{22}&a_{12}+a_{21}\\a_{12}+a_{21}&a_{22}-a_{11}
	\end{pmatrix}\right)\mathbb{P}^\top\\
	=&\begin{pmatrix}
		(c_1+3c_2+c_3)a_{11}+(c_1+c_2-c_3)a_{22}&(c_2+c_3)(a_{12}+a_{21})\\
		(c_2+c_3)(a_{12}+a_{21})&(c_1+c_2-c_3)a_{11}+(c_1+3c_2+c_3)a_{22}
	\end{pmatrix}\begin{pmatrix}
		p_{11}&p_{21}\\p_{12}&p_{22}
	\end{pmatrix}.
\end{align*}
Therefore, when $A=\begin{pmatrix}
	1&0\\
	0&0
\end{pmatrix},$
\begin{align*}
	\begin{pmatrix}
		c_1+3c_2+c_3&0\\
		0&c_1+c_2-c_3
	\end{pmatrix}\begin{pmatrix}
		p_{11}&p_{21}\\p_{12}&p_{22}
	\end{pmatrix}
	=\begin{pmatrix}
		(c_1+3c_2+c_3)p_{11}&(c_1+3c_2+c_3)p_{21}\\(c_1+c_2-c_3)p_{12}&(c_1+c_2-c_3)p_{22}
	\end{pmatrix}.
\end{align*}
When $A=\begin{pmatrix}
	0&0\\
	0&1
\end{pmatrix},$
\begin{align*}
	\begin{pmatrix}
		c_1+c_2-c_3&0\\
		0&c_1+3c_2+c_3
	\end{pmatrix}\begin{pmatrix}
		p_{11}&p_{21}\\p_{12}&p_{22}
	\end{pmatrix}
	=\begin{pmatrix}
		(c_1+c_2-c_3)p_{11}&(c_1+c_2-c_3)p_{21}\\(c_1+3c_2+c_3)p_{12}&(c_1+3c_2+c_3)p_{22}
	\end{pmatrix}.
\end{align*}
When $A=\begin{pmatrix}
	0&1\\
	1&0
\end{pmatrix},$
{
	\begin{align*}
		\begin{pmatrix}
			0&2(c_2+c_3)\\
			2(c_2+c_3)&0
		\end{pmatrix}\begin{pmatrix}
			p_{11}&p_{21}\\p_{12}&p_{22}
		\end{pmatrix}
		=2(c_2+c_3)\begin{pmatrix}
			p_{12}&p_{22}\\p_{11}&p_{21}
		\end{pmatrix}.
\end{align*}}
When $A=\begin{pmatrix}
	0&1\\
	-1&0
\end{pmatrix},$
\begin{align*}
	\begin{pmatrix}
		0&0\\
		0&0
	\end{pmatrix}\begin{pmatrix}
		p_{11}&p_{21}\\p_{12}&p_{22}
	\end{pmatrix}
	=0.
\end{align*}
Thus
\begin{align}\label{matf}
	\mathbb{F}=\begin{pmatrix}
		(c_1+3c_2+c_3)p_{11}&(c_1+c_2-c_3)p_{11}&2(c_2+c_3)p_{12}&0\\
		(c_1+c_2-c_3)p_{22}&(c_1+3c_2+c_3)p_{22}&2(c_2+c_3)p_{21}&0\\
		\frac{(c_1+3c_2+c_3)p_{21}+(c_1+c_2-c_3)p_{12}}{2}&\frac{(c_1+c_2-c_3)p_{21}+(c_1+3c_2+c_3)p_{12}}{2}&(c_2+c_3)(p_{11}+p_{22})&0\\
		\frac{(c_1+3c_2+c_3)p_{21}-(c_1+c_2-c_3)p_{12}}{2}&\frac{(c_1+c_2-c_3)p_{21}-(c_1+3c_2+c_3)p_{12}}{2}&(c_2+c_3)(p_{22}-p_{11})&0
	\end{pmatrix},
\end{align}
where $c_1=2\lambda\kp^{\frac{7}{2}}\pi,c_2=\mu\kp^{\frac{7}{2}}\pi,c_3=\mu\ks^{\frac{7}{2}}\pi$.


\begin{thebibliography}{10}
	
	\bibitem{alvesFarfieldOperatorElastic2002}
	C.~J.~S. Alves.
	\newblock On the far-field operator in elastic obstacle scattering.
	\newblock {\em IMA Journal of Applied Mathematics}, 67(1):1--21, February 2002.
	
	\bibitem{2015Mathematical}
	H.~Ammari, A.~Wahab, J.~Garnier, and E.~Bretin.
	\newblock Mathematical methods in elasticity imaging.
	\newblock {\em Princeton University Press}, 2015.
	
	\bibitem{antoineFarFieldModeling2008}
	X.~Antoine, B.~Pin{\c c}on, K.~Ramdani, and B.~Thierry.
	\newblock Far field modeling of electromagnetic time reversal and application
	to selective focusing on small scatterers.
	\newblock {\em SIAM Journal on Applied Mathematics}, 69(3):830--844, January
	2008.
	
	\bibitem{arensLinearSamplingMethods2001}
	T~Arens.
	\newblock Linear sampling methods for 2d inverse elastic wave scattering.
	\newblock {\em Inverse Problems}, 17(5):1445--1464, October 2001.
	
	\bibitem{BAO2018263}
	Gang Bao, Guanghui Hu, Jiguang Sun, and Tao Yin.
	\newblock Direct and inverse elastic scattering from anisotropic media.
	\newblock {\em Journal de Mathématiques Pures et Appliquées}, 117:263--301,
	2018.
	
	\bibitem{benamarNumericalSimulationAcoustic2007}
	Chokri Ben~Amar, Nabil Gmati, Christophe Hazard, and Karim Ramdani.
	\newblock Numerical simulation of acoustic time reversal mirrors.
	\newblock {\em Siam Journal on Applied Mathematics}, 67(3):777--791, 2007.
	
	\bibitem{burkardFarFieldModel2013}
	Corinna Burkard, Aurelia Minut, and Karim Ramdani.
	\newblock Far field model for time reversal and application to selective
	focusing on small dielectric inhomogeneities.
	\newblock {\em Inverse Problems and Imaging}, 7(2):445--470, 2013.
	
	\bibitem{coltonInverseAcousticElectromagnetic2019}
	David Colton and Rainer Kress.
	\newblock {\em Inverse Acoustic and Electromagnetic Scattering Theory},
	volume~93 of {\em Applied Mathematical Sciences}.
	\newblock Springer International Publishing, Cham, 2019.
	
	\bibitem{2018Inverse}
	H.~Dong, J.~Lai, and P.~Li.
	\newblock Inverse obstacle scattering problem for elastic waves with phased or
	phaseless far-field data.
	\newblock {\em SIAM Journal on Imaging Sciences}, 2018.
	
	\bibitem{2020An}
	H.~Dong, J.~Lai, and P.~Li.
	\newblock An inverse acoustic-elastic interaction problem with phased or
	phaseless far-field data.
	\newblock {\em Inverse Problems}, 36(3), 2020.
	
	\bibitem{finkTimeReversalAcousticsBiomedical2003}
	Mathias Fink, Gabriel Montaldo, and Mickael Tanter.
	\newblock Time-reversal acoustics in biomedical engineering.
	\newblock {\em Annual Review of Biomedical Engineering}, 5(1):465--497, 2003.
	
	\bibitem{greengardAcceleratingNonuniformFast2004}
	Leslie Greengard and June-Yub Lee.
	\newblock Accelerating the nonuniform fast fourier transform.
	\newblock {\em SIAM Review}, 46(3):443--454, January 2004.
	
	\bibitem{griesmaierMultifrequencyOrthogonalitySampling2011}
	Roland Griesmaier.
	\newblock Multi-frequency orthogonality sampling for inverse obstacle
	scattering problems.
	\newblock {\em Inverse Problems}, 27(8):085005, 2011.
	
	\bibitem{hazardSelectiveAcousticFocusing2004}
	Christophe Hazard and Karim Ramdani.
	\newblock Selective acoustic focusing using time-harmonic reversal mirrors.
	\newblock {\em SIAM Journal on Applied Mathematics}, 64(3):1057--1076, January
	2004.
	
	\bibitem{hsiaoBoundaryIntegralEquations2008}
	George~C. Hsiao and Wolfgang~L. Wendland.
	\newblock {\em Boundary Integral Equations}.
	\newblock Number 164 in Applied Mathematical Sciences. Springer, Berlin
	Heidelberg, 2008.
	
	\bibitem{huInverseProblemsArising2012a}
	Guanghui Hu, Andreas Kirsch, and Mourad Sini.
	\newblock Some inverse problems arising from elastic scattering by rigid
	obstacles.
	\newblock {\em Inverse Problems}, 29(1):015009, December 2012.
	
	\bibitem{huRecoveringComplexElastic2014}
	Guanghui Hu, Jingzhi Li, and Hongyu Liu.
	\newblock Recovering complex elastic scatterers by a single far-field pattern.
	\newblock {\em Journal of Differential Equations}, 257(2):469--489, July 2014.
	
	\bibitem{huDirectInverseTimeharmonic2020}
	Guanghui Hu, Andrea Mantile, Mourad Sini, and Tao Yin.
	\newblock Direct and inverse time-harmonic elastic scattering from point-like
	and extended obstacles.
	\newblock {\em Inverse Problems and Imaging}, 14(6):1025--1056, Thu Aug 27
	20:00:00 EDT 2020.
	
	\bibitem{kato2013perturbation}
	Tosio Kato.
	\newblock {\em Perturbation theory for linear operators}, volume 132.
	\newblock Springer Science \& Business Media, 2013.
	
	\bibitem{kressLinearIntegralEquations2014}
	Rainer Kress.
	\newblock {\em Linear Integral Equations}, volume~82 of {\em Applied
		Mathematical Sciences}.
	\newblock Springer New York, New York, NY, 2014.
	
	\bibitem{laiFastInverseElastic2022}
	Jun Lai and Jinrui Zhang.
	\newblock Fast inverse elastic scattering of multiple particles in three
	dimensions.
	\newblock {\em Inverse Problems}, August 2022.
	
	\bibitem{le2015domain}
	Fr{\'e}d{\'e}rique Le~Lou{\"e}r.
	\newblock A domain derivative-based method for solving elastodynamic inverse
	obstacle scattering problems.
	\newblock {\em Inverse Problems}, 31(11):115006, 2015.
	
	\bibitem{liDecompositionTimeReversal2012a}
	Chun-xiao Li, Huan-cai Lu, Ming-fei Guo, Hang-fang Zhao, and Jiang-ming Jin.
	\newblock Decomposition of the time reversal operator for target detection.
	\newblock {\em Mathematical Problems in Engineering}, 2012:597474, 2012.
	
	\bibitem{olverNISTHandbookMathematical2010}
	Frank W.~J. Olver and {National Institute of Standards {and} Technology
		(U.S.)}, editors.
	\newblock {\em NIST Handbook of Mathematical Functions}.
	\newblock Cambridge University Press : NIST, Cambridge ; New York, 2010.
	
	\bibitem{pinconSelectiveFocusingSmall2007}
	Bruno Pin{\c c}on and Karim Ramdani.
	\newblock Selective focusing on small scatterers in acoustic waveguides using
	time reversal mirrors.
	\newblock {\em Inverse Problems}, 23(1):1--25, 2007.
	
	\bibitem{pradaEigenmodesTimeReversal1994}
	Claire Prada and Mathias Fink.
	\newblock Eigenmodes of the time reversal operator: A solution to selective
	focusing in multiple-target media.
	\newblock {\em Wave Motion}, 20(2):151--163, September 1994.
	
	\bibitem{pradaTimeReversalTechniques2002}
	Claire Prada, Estelle Kerbrat, Didier Cassereau, and Mathias Fink.
	\newblock Time reversal techniques in ultrasonic nondestructive testing of
	scattering media.
	\newblock {\em Inverse Problems}, 18(6):1761, November 2002.
	
	\bibitem{steinHarmonicAnalysisRealVariable1993}
	Elias~M. Stein.
	\newblock {\em Harmonic Analysis: Real-Variable Methods, Orthogonality, and
		Oscillatory Integrals}.
	\newblock Princeton University Press, Princeton, N.J, 1st edition edition, July
	1993.
	
	\bibitem{thierryDiffOpensourceMatlab2015a}
	Bertrand Thierry, Xavier Antoine, Chokri Chniti, and Hasan Alzubaidi.
	\newblock {$\mu$}-diff: An open-source matlab toolbox for computing multiple
	scattering problems by disks.
	\newblock {\em Computer Physics Communications}, 192:348--362, July 2015.
	
\end{thebibliography}
\end{document}